\theoremstyle{plain}
\newtheorem{acknowledgement}{Acknowledgement}
\newtheorem{corollary}{Corollary}
\newtheorem{definition}{Definition}
\newtheorem{example}{Example}
\newtheorem{lemma}{Lemma}
\newtheorem{proposition}{Proposition}
\newtheorem{remark}{Remark}
\numberwithin{equation}{section}
\begin{document}
\title[Skorokhod-Malliavin Calculus]{On distribution free
Skorokhod-Malliavin calculus}
\author{R. Mikulevicius}
\address{University of Southern California}
\email{mikulvcs@math.usc.edu}
\author{B.L. Rozovskii}
\thanks{B.L. Rozovskii acknowledges support by ARO grant W911NF-13-1-0012. }
\address{Brown University}
\email{boris\_rozovsky@brown.edu}
\date{June 20, 2014}
\subjclass[2000]{Primary 60H05, 60H07; Secondary, 60H10, 60H15 }
\keywords{Distribution free Skorokhod-Malliavin calculus; linear SDEs and
SPDEs}

\begin{abstract}
The starting point of the current paper is a sequence of uncorrelated random
variables. The distribution functions\ of these variables are assumed to be
given but no assumptions on the types or the structure of these
distributions are made. The above setting constitute the so called
"distribution free" paradigm. Under these assumptions, a version of
Skorokhod-Malliavin calculus is developed and applications to stochastic
PDES are discussed.
\end{abstract}

\maketitle

\section{Introduction}

\bigskip The theory and applications of Skorokhod-Malliavin calculus are
well developed for Gaussian and Poisson processes, see, for example, A.V.
Skorokhod \cite{sk}, P. Malliavin \cite{Mal1}, \cite{Mal2}, G. Di Nunno et
al \cite{Ok2}, D. Nualart \cite{Nualrt}, M. Sanz-Sole \cite{MS}.

In this paper we will introduce and investigate an extension of
Skorokhod-Malliavin calculus to random fields generated by an \emph{arbitrary%
} sequence $\Xi =\left( \xi _{1},\xi _{2},...\right) $ of square integrable
and uncorrelated random variables on the probability space $(\Omega ,%
\mathcal{F},\mathbf{P)}$. Let us assume that for every $i,$ $\Pr \left( \xi
_{i}<x\right) =F^{i}\left( x\right) .$ These distribution functions\textit{\
are given but their types are not specified}. Of course, some or all of
these distribution functions could coincide.

The above setting constitute the so called "distribution free" paradigm. As
the title suggests, our task is to develop a version of Malliavin-Skorohod
calculus in the distribution free setting. At the first glance, a rigorous
implementation of the distribution free Skorokhod-Malliavin calculus might
seem to be a long shot, however, it is not completely unexpected. Consider,
for example, the following quotation from P.A. Meyer\ \cite{meyer}:
\textquotedblleft The first and very important point is that, in the
construction of multiple integrals, the Gaussian character of the process
never appears".

The main operators of Skorokhod-Malliavin calculus are Wick product ($%
\diamond )$, Skorokhod integral ($\delta $)\ , and Malliavin derivative ($%
\mathbb{D}$). Skorokhod integral is an anti-derivative of Malliavin
derivative and Wick product is an elementary but useful version of Skorokhod
integral. In the current paper, these operators are defined and investigated
in the distribution free setting under two natural assumptions (see \textbf{%
B1 }and \textbf{B2} below) that hold, practically, for all standard types of
random variables.

The \textit{distribution free} version of Skorokhod-Malliavin calculus,
developed in Sections 2-3, preserves the fundamental properties of the 
\textit{classic} Malliavin- Skorokhod calculus. For example, the It\^{o}%
-Skorokhod isometry

\begin{eqnarray}
\mathbf{E}\left[ |\delta \left( u\right) |^{2}\right] &=&\mathbf{E}%
\left\Vert u\right\Vert ^{2}  \label{ItoIsomntr} \\
&&  \notag \\
\mathbf{E}\left[ |\delta \left( u\right) |^{2}\right] &=&\mathbf{E}%
\left\Vert u\right\Vert ^{2}+\mathbf{E}\left[ (\mathbb{D}u,\mathbb{D}u)%
\right]  \label{AIs}
\end{eqnarray}%
where (\ref{ItoIsomntr}) holds for functions $u$ adapted to the appropriate
filtration and (\ref{AIs}) holds for non-adapted random variables (see
Proposition \ref{pe2}).

In the distribution free setting, it is natural to construct the \textit{%
driving noise} $\mathfrak{\dot{N}}$ as follows:%
\begin{equation*}
\mathfrak{\dot{N}}=\sum_{k}m_{k}\xi _{k}\text{, }\xi _{k}\in \Xi ,
\end{equation*}%
where $\xi _{k\text{ }}$are uncorrelated random variables, $\mathbf{E}\xi
_{k}=0$, $\mathbf{E}\left\vert \xi _{k}\right\vert ^{2}=1$ and $\left\{
m_{k},k\geq 1\right\} $ is an orthogonal basis in some Hilbert space $%
\mathbf{H}=L_{2}\left( U,\mathcal{U},\mu \right) ,$ where $(U,\mathcal{U}%
,\mu )$ is a $\sigma $-finite measure space.

Let $J$ be the set of multiindices $\alpha =\left( \alpha _{1},\alpha
_{2},...\right) $, such that for every $k,$ $\alpha _{k}\in \mathbf{N}_{0}$%
,\ $\mathbf{N}_{0}=\left\{ 0,1,2,...\right\} $ and $\left\vert \alpha
\right\vert =\sum_{k}\alpha _{k}.$ We will construct a complete orthogonal
system $\left\{ \mathfrak{N}_{\alpha },\alpha \in J\right\} $\ in $%
L^{2}\left( \Omega ,\sigma \left( \xi _{k},k\geq 1\right) ,\mathbf{P}\right)
.$ By construction, this\ basis is distribution free. The Cameron-Martin
basis for Gaussian random fields is a particular case of the basis $\left\{ 
\mathfrak{N}_{\alpha },\alpha \in J\right\} .$ The set of deterministic
coefficients $\left\{ X_{\alpha }=\mathbf{E}\left( X\mathfrak{N}_{\alpha
}\right) ,\alpha \in J\right\} $ is often referred to as the \textit{%
propagator} of the random variable/field $X$.

In Section 4, the distribution free Skorokhod-Malliavin "technology" is
applied to the analysis of linear ordinary SDE as well as linear stochastic
parabolic and elliptic SPDEs with additive or multiplicative distribution
free noise. We will study these equations and their relations in adapted and
non-adapted settings.

As an example, let us consider an adapted parabolic SPDE 
\begin{equation}
u(t)=w+\int_{0}^{t}{\mathcal{L}}u(s)ds+\int_{0}^{t}\int_{U}\left[
u(s)G\left( s,\upsilon \right) +f\left( s,\upsilon \right) \right] \diamond 
\mathfrak{N}\left( ds,d\upsilon \right) ,  \label{pfo0}
\end{equation}%
where $\mathcal{L}u=a^{ij}(x)u_{x_{i}x_{j}}+b^{i}(x){u_{x_{i}}.}$ The
propagator associated with (\ref{pfo0}) is given by following low-triangular
system of deterministic PDEs for the coefficients $u_{0}\left( t\right)
=w_{0}$ and 
\begin{equation}
\left\{ 
\begin{array}{l}
\partial _{t}u_{\alpha }(t)={\mathcal{L}}u_{\alpha
}+\sum_{k}\int_{V}m_{k}(u_{\alpha (k)}G+f_{\alpha \left( k\right) })d\pi \\ 
u_{\alpha }(0)=w,%
\end{array}%
\right.  \label{propag0}
\end{equation}%
where $\alpha \left( k\right) =(\alpha _{1},\alpha _{2},...\alpha
_{k-1},\alpha _{k}-1,\alpha _{k+1},...)$ and $\left\vert \alpha \right\vert
>0.$ In contrast to stochastic PDE (\ref{pfo0}), the related propagator is a
deterministic lower-triangular system. Therefore, under quite general
assumptions, system (\ref{propag0}) is solvable. Therefore, one can
construct a distribution free "polynomial chaos" solution of equation (\ref%
{pfo0}) in the form $u=\sum_{\alpha }u_{\alpha }\mathfrak{N}_{\alpha }.$\ 

Note that, due to its lower triangular structure, the propagator (\ref%
{propag0}) can be solved sequentially. The latter bodes well to the
efficiency of numerical implementation of the polynomial chaos solutions.

Similarly, under standard assumptions (e.g. positivity of the operator $A$),
one can construct a polynomial chaos solution of the stationary equation 
\begin{equation}
{\mathbf{A}}u+\sum_{n\geq 1}\mathbf{M}_{n}u\diamond \xi _{n}=f\text{ .}
\label{STBL}
\end{equation}%
The propagator of this equation is given by the system 
\begin{equation}
\begin{array}{c}
{\mathbf{A}}u_{\alpha }=Ef\text{ }\ \text{if }\left\vert \alpha \right\vert
=0 \\ 
{\mathbf{A}}u_{\alpha }+\sum_{n\geq 1}\mathbf{M}_{n}u_{\alpha -\varepsilon
_{n}}=f_{\gamma }\text{ }\ \text{if }\left\vert \alpha \right\vert >0,%
\end{array}
\label{PropEll1}
\end{equation}%
where $\varepsilon _{n}$ is a multiindices with $\left\vert \varepsilon
_{n}\right\vert =1$ and the only non-zero component at at the $n^{th\text{ }%
} $coordinate.

Again, under standard assumptions (including positivity of the operator $A$%
), one can solve the propagator of the stationary equation and reconstruct
the distribution free solution of (\ref{STBL}) in the polynomial chaos form.

Note that, in both settings the triangular feature of the propagator is due
to the linear structure of the underlying stochastic equations.

A much more difficult and nuanced problem is the existence of "distribution
free" solutions for nonlinear stochastic equations driven by arbitrary
noise. In general, one should not expect a "universal" answer, because the
coefficients of expansions of a nonlinear function (e.g. a product) of
random variables depends on the types of this random variables. For more
detail and examples, see \cite{RSA}, \cite{MR2}.

Nevertheless, there exists at least one reasonably broad class of nonlinear
SPDEs that fits into the distribution free paradigm. Specifically, SPDEs
with the so-called Wick-polynomial nonlinearities belong to this class.
Equation 
\begin{equation}
{\mathbf{A}}u-u^{\diamond k}+\sum_{n\geq 1}\mathbf{M}_{n}u\diamond \xi _{n}=f%
\text{ }  \label{ell-3-d}
\end{equation}%
is an important example of a nonlinear equation from this class.

Note that the nonlinear part of equation (\ref{ell-3-d}) is a Wick power. By
this reason, it is easy to see that the propagator of equation (\ref{ell-3-d}%
) is again a linear lower triangular system given by 
\begin{equation}
{\mathbf{A}}u_{\alpha }-\sum_{\kappa ,\beta ,\gamma :\kappa +\beta +\gamma
=\alpha }u_{\kappa }u_{\beta }u_{\gamma }+\sum_{n\geq 1}\mathbf{M}%
_{n}u_{\alpha -\varepsilon _{n}}=f_{\alpha }  \label{STBL2}
\end{equation}%
for all $\alpha \in J.$ Due to its lower-triangular this system is uniquely
solvable (under reasonable assumptions on the operators).

Wick products have also been used for designing unbiased approximations of
stochastic Navier-Stokes equation (see \cite{MR1} and the references
therein). Specifically, in this types of models, the standard nonlinear term 
$\boldsymbol{v\nabla v}$ was approximated by the Wick product $\boldsymbol{%
v\diamond \nabla v}$. The stochastic Navier-Stokes equation with this
correction is unbiased, that is the expectation of the solution coincides
with the related deterministic Navier-Stokes equation.

Note that the first examples of systems with Wick nonlinearities were
introduced and investigated in Euclidean Quantum Field theory (see e.g. \cite%
{Sim}). \ 

The most challenging problem in the analysis of distribution free noise is
development of a distribution free calculus suitable for adapted and
non-adapted "arbitrary" random fields. Not surprisingly, this subject
constitutes the "foundation\textquotedblright\ of the paper. It is addressed
in Section 3. This Section includes construction of \textit{distribution
free }multiple integrals, Skorokhod integral, Malliavin derivatives, and
Wick exponent, as well as Ito-Skorokhod isometry. Linear SDEs and parabolic
SPDEs are discussed in Section 4.02, 4.03. Linear and Wick-nonlinear
elliptic (non-adapted) SPDEs are covered in Section 4.1. The Appendix deals
with Wick products of multiple integrals.

\section{Driving cylindrical random fields}

Let $\left( U,\mathcal{U},\mu \right) $ be a $\sigma $-finite complete
measure space. Let $\mathbf{H}=L_{2}(U,\mathcal{U},\mu )$ and $(\Omega ,%
\mathcal{F},\mathbf{P)}$ be a complete probability space.

\begin{definition}
A continuous linear functional $\mathfrak{N}$ from $\mathbf{H}$ to $%
L_{2}\left( \Omega ,\mathcal{F},\mathbf{P}\right) $ such that $\mathbf{E[}%
\mathfrak{N}\left( f\right) ^{2}]=\left\vert f\right\vert _{2}^{2},f\in 
\mathbf{H}$, is called a \textbf{driving} cylindrical random field on $U.$
\end{definition}

It is assumed that $\mathbf{E}\mathfrak{N}(f)=0,f\in \mathbf{H}$. Clearly, $%
\mathfrak{N}$ is an isometric embedding of $\mathbf{H}$ into $L^{2}\left(
\Omega ,\mathcal{F},\mathbf{P}\right) .$

\begin{remark}
If $f\in \mathbf{H}$ and $\left\{ m_{k}\right\} $ is a complete orthonormal
system in $\mathbf{H}$, then, obviously,%
\begin{equation*}
\mathfrak{N}(f)=\sum_{k}f_{k}\xi _{k}\text{ in }L_{2}\left( \Omega ,\mathcal{%
F},\mathbf{P}\right) ,
\end{equation*}%
where $\xi _{k}=\mathfrak{N}\left( m_{k}\right) $ and $f_{k}=\int_{U}fm_{k}d%
\mu $ (note that $f=\sum_{k}f_{k}m_{k}$ in $\mathbf{H}$). Moreover,%
\begin{equation*}
\mathbf{E[}\mathfrak{N}\left( f\right) \mathfrak{N}\left( g\right) ]=\int
fgd\mu ,~f,g\in \mathbf{H},
\end{equation*}
\end{remark}

The noise (associated to $\mathfrak{N}$) is the formal series%
\begin{equation*}
\mathfrak{\dot{N}}(\upsilon )=\sum_{k}m_{k}\left( \upsilon \right) \xi
_{k},\upsilon \in U.
\end{equation*}%
We write%
\begin{equation*}
\mathfrak{N}(f)=\int_{U}\mathfrak{\dot{N}}\left( \upsilon \right) f(\upsilon
)\mu (d\upsilon )=\int_{U}f(\upsilon )\mathfrak{N}(d\upsilon
)=\sum_{k}f_{k}\xi _{k}.
\end{equation*}

\begin{remark}
\label{r1}If $\left( \xi _{k}\right) $ is an arbitrary sequence of centered
uncorrelated r.v. in $L^{2}\left( \Omega ,\mathbf{P}\right) $, then for any $%
L_{2}(U,\mathcal{U},\mu )$ the map%
\begin{equation*}
\mathfrak{N}\left( f\right) =\sum_{k}f_{k}\xi _{k},~f=\sum_{k}f_{k}m_{k}\in
L^{2}\left( U,\mu \right) ,
\end{equation*}%
is a driving cylindrical random field on $U,$ that is any sequence of
centered uncorrelated r.v. can define a generalized driving random field.
\end{remark}

We shall introduce the following assumptions about $\mathfrak{N}$.

\textbf{B1. }L\emph{et }$\xi _{k}=\mathfrak{N}\left( m_{k}\right) ,k\geq 1$%
\emph{. For each vector r.v. }$\left( \xi _{i_{1}},\ldots ,\xi
_{i_{n}}\right) ,n\geq 1,$\emph{\ the moment generating function }%
\begin{equation*}
M_{i_{1}\ldots i_{n}}(t)=M_{i_{1}\ldots i_{n}}(t_{1},\ldots ,t_{n})=\mathbf{E%
}\exp \left\{ t_{1}\xi _{i_{1}}+\ldots t_{n}\xi _{i_{n}}\right\}
\end{equation*}%
\emph{exists for all }$t=(t_{1},\ldots ,t_{n})$ \emph{in some neighborhood
of }$0\in \mathbf{R}^{n}$\emph{.}

This assumption implies immediately that $\xi _{k}$ have all the moments
(see Theorem 5a, p. 57 in \cite{w}). Let $J$ be the set of all multiindices $%
\alpha =\left( \alpha _{1},\ldots \right) $ such that $\alpha _{k}\in
\left\{ 0,1,2,\ldots \right\} $ and $\left\vert \alpha \right\vert
=\sum_{k}\alpha _{k}<\infty $. For $\alpha =\left( \alpha _{k}\right) \in J$
we denote%
\begin{equation*}
\xi ^{\alpha }=\prod_{k}\xi _{k}^{\alpha _{k}},\xi ^{0}=1.
\end{equation*}

The following assumption is needed as well.

\textbf{B2. }\emph{Assume we are given an orthogonalization }$\left\{ \tilde{%
K}_{\alpha },\alpha \in J\right\} $\emph{\ of the system }$O=\left\{ \xi
^{\alpha },\alpha \in J\right\} $\emph{\ such that for each }$n,$\emph{\ }$%
\left\{ \tilde{K}_{p},|p|\leq n\right\} $\emph{\ spans the same linear
subspace }$H_{n}$\emph{\ as }$\left\{ \xi ^{p},|p|\leq n\right\} $\emph{\
and for }$\left\vert \alpha \right\vert =n+1,$\emph{\ }%
\begin{equation*}
\tilde{K}_{\alpha }=\xi ^{\alpha }-\text{projection}_{H_{n}}\xi ^{\alpha }
\end{equation*}

Set $\mathfrak{N}_{\alpha }=c_{\alpha }\tilde{K}_{\alpha }$ so that $\mathbf{%
E}\left[ \mathfrak{N}_{\alpha }^{2}\right] =\alpha !$. The result is the
complete orthogonal system $\left\{ \mathfrak{N}_{\alpha },\alpha \in
J\right\} .$Obviously, $\mathfrak{N}_{0}=1,$and for $p\in J,p=\varepsilon
_{k}$ ($\varepsilon _{k}\in J$ and has all components zeros except $1$ as
the $k$th component), $\mathfrak{N}_{p}=\mathfrak{N}_{\varepsilon _{k}}=\xi
_{k}.$

\begin{remark}
\label{r2}a) If every $\xi _{k}=\mathfrak{N}\left( m_{k}\right) $ is
bounded, then \textbf{B1 }obviously holds.

b) The Hilbert space $\mathbf{H}$ can be finite dimensional. In particular,
if $U=\left\{ 1\right\} $ and $\mu $ is the Dirac measure $\delta _{1},$
then $\mathbf{H}$ is one-dimensional, $m_{1}=1$. If $\xi =\mathfrak{N}(1)$,
then the $\mathfrak{N}$-noise coincides with the r.v. $\mathfrak{\dot{N}}%
=\xi $.
\end{remark}

The following statement is almost obvious.

\begin{proposition}
\label{p1}Assume \textbf{B1, B2 }hold. Let $\mathcal{F}^{0}=\sigma \left(
\xi _{k},k\geq 1\right) =\sigma \left( \mathfrak{N}(f),f\in \mathbf{H}%
\right) .$ Then $\left\{ \mathfrak{N}_{\alpha }=\xi ^{\diamond \alpha
},\alpha \in J\right\} $ is a complete orthogonal system of $L_{2}\left(
\Omega ,\mathcal{F}^{0},\mathbf{P}\right) :$ for each $\eta \in L_{2}\left(
\Omega ,\mathcal{F}^{0},\mathbf{P}\right) ,$%
\begin{equation*}
\eta =\sum_{\alpha }\eta _{\alpha }\mathfrak{N}_{\alpha },
\end{equation*}%
where%
\begin{equation*}
\eta _{\alpha }=\frac{\mathbf{E}\left[ \eta \mathfrak{N}_{\alpha }\right] }{%
\alpha !}.
\end{equation*}%
Note that $\sum_{\alpha }\eta _{\alpha }^{2}\alpha !=\mathbf{E}\left[ \eta
^{2}\right] <\infty .$
\end{proposition}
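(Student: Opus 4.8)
The plan is to split the statement into two parts: the orthogonality of the family $\{\mathfrak{N}_{\alpha}\}$, which is essentially furnished by assumption \textbf{B2}, and its completeness in $L_{2}(\Omega ,\mathcal{F}^{0},\mathbf{P})$, which is the real content and rests on \textbf{B1}. Once both are in hand, the expansion $\eta =\sum_{\alpha }\eta _{\alpha }\mathfrak{N}_{\alpha }$, the formula $\eta _{\alpha }=\mathbf{E}[\eta \mathfrak{N}_{\alpha }]/\alpha !$, and the Parseval identity $\sum_{\alpha }\eta _{\alpha }^{2}\alpha !=\mathbf{E}[\eta ^{2}]$ are nothing but the standard abstract Fourier-series facts for a complete orthogonal system in a Hilbert space, using $\mathbf{E}[\mathfrak{N}_{\alpha }^{2}]=\alpha !$ to normalize the coefficients. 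Orthogonality itself is immediate: \textbf{B2} declares $\{\tilde{K}_{\alpha }\}$ an orthogonalization of $\{\xi ^{\alpha }\}$, and $\mathfrak{N}_{\alpha }=c_{\alpha }\tilde{K}_{\alpha }$ only rescales each vector, so the $\mathfrak{N}_{\alpha }$ remain pairwise orthogonal.

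For completeness I would argue that the closed linear span of $\{\mathfrak{N}_{\alpha },\alpha \in J\}$ is all of $L_{2}(\Omega ,\mathcal{F}^{0},\mathbf{P})$. Because, by \textbf{B2}, for each $n$ the vectors $\{\tilde{K}_{p},|p|\leq n\}$ span the same subspace as $\{\xi ^{p},|p|\leq n\}$, the closed span of the $\mathfrak{N}_{\alpha }$ coincides with the closed span of all monomials $\xi ^{\alpha }$, i.e.\ with the $L_{2}$-closure of the polynomials in the variables $(\xi _{k})$. Hence it suffices to show that a random variable $\eta \in L_{2}(\Omega ,\mathcal{F}^{0},\mathbf{P})$ with $\mathbf{E}[\eta \,\xi ^{\alpha }]=0$ for every $\alpha \in J$ must vanish a.s. Splitting $\eta $ into its real and imaginary parts, I may assume $\eta $ is real.

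The key step, and the main obstacle, is to upgrade the vanishing of all mixed moments $\mathbf{E}[\eta \,\xi ^{\alpha }]=0$ to the vanishing of a characteristic function, and here \textbf{B1} is essential. Fix finitely many indices $i_{1},\dots ,i_{n}$ and write $\xi =(\xi _{i_{1}},\dots ,\xi _{i_{n}})$. Assumption \textbf{B1} guarantees $\mathbf{E}\,e^{t\cdot \xi }<\infty $ for $t$ in a neighborhood of the origin, which (via Cauchy--Schwarz, since $\eta \in L_{2}$) makes $z\mapsto g(z)=\mathbf{E}[\eta \,e^{z\cdot \xi }]$ well defined and holomorphic on a complex strip containing the entire imaginary axis. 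Differentiating under the expectation shows that the Taylor coefficients of $g$ at the origin are precisely the numbers $\mathbf{E}[\eta \,\xi ^{\beta }]$, all of which vanish; hence $g\equiv 0$ on the strip, and in particular $\mathbf{E}[\eta \,e^{\,is\cdot \xi }]=g(is)=0$ for every $s\in \mathbf{R}^{n}$. The justification of analyticity and of the term-by-term differentiation --- i.e.\ dominating the exponential series by an integrable majorant supplied by \textbf{B1} --- is the delicate point and should be carried out carefully.

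It remains to pass from characteristic functions back to $\eta $. The finite signed measure $\nu (A)=\mathbf{E}[\eta \,\mathbf{1}_{A}]$ pushes forward under $\omega \mapsto \xi (\omega )$ to a finite signed measure on $\mathbf{R}^{n}$ whose Fourier transform is identically zero; by uniqueness of the Fourier transform this pushforward is the zero measure, so $\mathbf{E}[\eta \,\mathbf{1}_{A}]=0$ for every $A\in \sigma (\xi _{i_{1}},\dots ,\xi _{i_{n}})$. Since the indices were arbitrary, this holds on the algebra generated by all finite-dimensional cylinder sets, and a monotone-class argument extends it to every $A\in \mathcal{F}^{0}=\sigma (\xi _{k},k\geq 1)$. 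Because $\eta $ is $\mathcal{F}^{0}$-measurable, $\mathbf{E}[\eta \,\mathbf{1}_{A}]=0$ for all such $A$ forces $\eta =0$ a.s. This establishes completeness, and together with orthogonality completes the proof via the abstract Hilbert-space expansion described above.
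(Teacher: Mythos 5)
Your proposal is correct and takes essentially the same approach as the paper: the paper reduces completeness to its Appendix Lemma \ref{ale1}, whose proof is exactly your key step --- \textbf{B1} makes $h(r,\theta )=\mathbf{E}[f\exp \{r\theta \cdot \xi ^{n}\}]$ a bilateral Laplace transform analytic in a strip (justified by citing Widder's theorem, which covers the differentiation-under-the-expectation point you flagged), the vanishing mixed moments force it to vanish on the imaginary axis, and the vanishing characteristic function then yields $f=0$ a.s., after which orthogonality and the Fourier expansion follow from \textbf{B2} and standard Hilbert-space facts. The only cosmetic differences are that you work with a several-variable holomorphic function on a tube and invoke uniqueness of Fourier transforms of finite signed measures, whereas the paper fixes a direction $\theta $ to stay in one complex variable and instead passes from the vanishing characteristic function to $f=0$ by approximating with periodic, compactly supported, and bounded continuous functions.
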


\begin{proof}
By Lemma \ref{ale1}, \textbf{B1 }implies that any $\eta \in L^{2}\left( 
\mathcal{F}^{0},\mathbf{P}\right) $ can be approximated by a sequence of
polynomials in $\xi ^{\alpha },\alpha \in J$. Therefore the
orthogonalization described in \textbf{B2 }defines a complete orthogonal
system and $\left\{ \mathfrak{N}_{\alpha }/\sqrt{\alpha !},\alpha \in
J\right\} $ is a CONS in $L^{2}\left( \mathcal{F}^{0},\mathbf{P}\right) $.
\end{proof}

\subsection{Examples}

\subsubsection{Driving cylindrical random fields and processes generated by
independent r.v.'s}

\begin{example}
\label{ex1}In the case of a single r.v. $\xi $ as in Remark \ref{r2}, $%
J=\left\{ 0,1,2,\ldots \right\} ,$ the complete orthogonal system $\left\{ 
\mathfrak{N}_{n},n\geq 0\right\} $ of $L_{2}\left( \mathcal{\sigma }\left(
\xi \right) ,\mathbf{P}\right) $ must coincide with the one obtained by
Gram-Scmidt orthogonalization procedure. We set $\mathfrak{N}_{0}=1,%
\mathfrak{N}_{1}=\xi $. If $H_{n}$ be the subspace generated by $\left\{ 
\mathfrak{N}_{l},l\leq n\right\} ,$ then we take%
\begin{equation*}
\tilde{K}_{n+1}=\xi ^{n+1}-\text{projection}_{H_{n}}\left( \xi ^{n+1}\right)
,
\end{equation*}%
and set $\mathfrak{N}_{n+1}=c_{n+1}\tilde{K}_{n+1}$ so that $\mathbf{E}\left[
\mathfrak{N}_{n+1}^{2}\right] =(n+1)!$.
\end{example}

\begin{example}
\label{ex2}Let $\xi _{k}$ be a sequence of independent centered r.v. whose
moment generating function exists in a neighborhood of zero. Assume$\ 
\mathbf{E}\left( \xi _{k}^{2}\right) =1$.

As already discussed in Remark \ref{r1}, for any $L_{2}(U,\mathcal{U},\mu )$
the map%
\begin{equation*}
\mathfrak{N}\left( f\right) =\sum_{k}f_{k}\xi _{k},~f=\sum_{k}f_{k}m_{k}\in
L^{2}\left( U,\mu \right) ,
\end{equation*}%
is a driving cylindrical random field on $U$. Obviously \textbf{B1 }holds.
For every $k$ we apply orthogonalization procedure of Example \ref{ex1} and
construct $\mathfrak{N}_{l}^{k}$ with $\mathbf{E}\left[ \left( \mathfrak{N}%
_{l}^{k}\right) ^{2}\right] =l!.$ For a multiindex $\alpha \in J$, we set$,$%
\begin{equation}
\mathfrak{N}_{\alpha }=\prod_{k}\mathfrak{N}_{\alpha _{k}}^{k}.  \label{fo3}
\end{equation}%
Note that $\mathbf{E}\left[ \mathfrak{N}_{\alpha }^{2}\right] =\alpha !$ For
any $\alpha \in J$ with $\left\vert \alpha \right\vert =n+1$, $\mathfrak{N}%
_{\alpha }$ is orthogonal to $H_{n}$ and of the form $\xi ^{\alpha
}-l_{\alpha }$, where $l_{\alpha }$ is a linear combination of vectors in $%
H_{n}$, i.e. $l_{\alpha }$ is the orthogonal projection of $\mathfrak{\xi }%
^{\alpha }$ on $H_{n}$. Therefore \textbf{B1 }is satisfied as well and $%
\left\{ \mathfrak{N}_{\alpha },\alpha \in J\right\} $ is a CONS in $%
L^{2}\left( \Omega ,\sigma \left( \xi _{k},k\geq 1\right) ,\mathbf{P}\right) 
$.
\end{example}

\begin{example}
\label{ex3}Let $\xi _{k}$ be a sequence independent centered r.v. that have
all the moments and $\mathbf{E}\left( \xi _{k}^{2}\right) =1$. Let $(U,%
\mathcal{U},\mu )$ be a $\sigma $-finite measure space. Let $\left\{
m_{k},k\geq 1\right\} $ be a CONS in $\mathbf{H}=L_{2}\left( U,\mathcal{U}%
,\mu \right) $. We can define $\mathfrak{N}:L_{2}\left( U,\mathcal{U},\mu
\right) \rightarrow L_{2}\left( \Omega ,\mathbf{P}\right) $ as 
\begin{equation}
\mathfrak{N}\left( f\right) =\sum_{k}f_{k}\xi _{k},f=\sum_{k}f_{k}m_{k}\in 
\mathbf{H}\text{,}  \label{fo1}
\end{equation}%
and the complete orthogonal system $\left\{ \mathfrak{N}_{\alpha },\alpha
\in J\right\} $ as in Example \ref{ex2}. The $\mathfrak{N}$-noise is%
\begin{equation*}
\mathfrak{\dot{N}}(x)=\sum_{k}m_{k}(x)\xi _{k}.
\end{equation*}

In particular, if $U=L^{2}\left( [0,T]\right) $ and $\left\{ m_{k},k\geq
1\right\} $ is a CONS on $L^{2}\left( [0,T]\right) ,$ then we can regard $%
\mathfrak{N}$ in (\ref{fo1}) as a stochastic process%
\begin{equation}
\mathfrak{N}_{t}=\mathfrak{N}\left( \chi _{\left[ 0,t\right] }\right)
=\sum_{k}\int_{0}^{t}m_{k}\left( s\right) ds\xi _{k},0\leq t\leq T.
\label{fo2}
\end{equation}%
It has uncorrelated increments, $\mathbf{E}\left( \mathfrak{N}%
_{t}^{2}\right) =t,\mathbf{E}\left( \mathfrak{N}_{t}\mathfrak{N}_{s}\right)
=t\wedge s,\mathbf{E}\left( \mathfrak{N}_{t}\right) =0,t\geq 0$. For any
continuous deterministic $f(t)$ on $[0,T]:$%
\begin{equation*}
\mathfrak{N}\left( f\right) =\lim_{n}\sum_{i=1}^{n}f\left( t_{i}\right) %
\left[ \mathfrak{N}_{t_{i+1}}-\mathfrak{N}_{t_{i}}\right] \text{ in }%
L^{2}\left( \Omega ,\mathbf{P}\right) ,
\end{equation*}%
where $t_{i}=t_{i}^{n},0\leq i\leq n,$ is a partition of $\left[ 0,T\right] $
into $n$ disjoint subintervals whose maximal size converges to zero as $%
n\rightarrow \infty $.
\end{example}

Some specific examples related to Example \ref{ex3}.

\begin{itemize}
\item For a standard normal $\xi \sim N\left( 0,1\right) ,$ the sequence $%
\mathfrak{N}_{n}$ in Example \ref{ex1} are Hermite polynomials: $\mathfrak{N}%
_{n}=\frac{d^{n}}{dz^{n}}p\left( z\right) \}|_{z=0}$ with%
\begin{equation*}
p\left( z\right) =\exp \left\{ z\xi -\frac{1}{2}z^{2}\right\} ,z\in \mathbf{%
R.}
\end{equation*}%
With a sequence $\xi _{k},k\geq 1,$ of independent standard normals,%
\begin{equation*}
W_{t}=\sum_{k}\int_{0}^{t}m_{k}(s)ds\xi _{k},0\leq t\leq T,
\end{equation*}%
is a standard Wiener process (see \cite{in}).

\item Let $\xi _{k}$ be i.i.d. such that $\mathbf{P}\left( \xi _{k}=1\right)
=\mathbf{P}\left( \xi _{k}=-1\right) =1/2.$ Then the driving process $%
\mathfrak{N}_{t}$ in (\ref{fo2}) is not Gaussian: for each $n>1,u\in \mathbf{%
R},$%
\begin{equation*}
\mathbf{E}\exp \left\{ \iota u\sum_{k=1}^{n}\int_{0}^{t}m_{k}(s)ds\xi
_{k}\right\} =\prod_{k=1}^{n}\cos \left\{ u\int_{0}^{t}m_{k}(s)ds\right\} .
\end{equation*}
It is straightforward to find ($\xi _{k}^{2}=1$ a.s.) that for%
\begin{equation*}
\mathbf{E}\left[ \left( W_{t}^{n}-W_{s}^{n}\right) ^{4}\right] \leq
C|t-s|^{2},~s,t\in \lbrack 0,T],
\end{equation*}%
where $W_{t}^{n}=\sum_{k=1}^{n}\int_{0}^{t}m_{k}\left( s\right) ds\xi
_{k},0\leq t\leq T$. Therefore, by the same arguments as in the Gaussian
case (see \cite{in}), $W_{t}$ has a continuous in $t$ modification.

\item Let $\xi _{k}$ be uniform i.i.d. on $[-\sqrt{3},\sqrt{3}]$. Again, we
have in (\ref{fo2}) a non-Gaussian driving process with continuous paths:%
\begin{equation*}
\mathbf{E}\exp \left\{ \iota u\sum_{k=1}^{n}\int_{0}^{t}m_{k}(s)ds\xi
_{k}\right\} =\prod_{k=1}^{n}\frac{\sin \left(
u\int_{0}^{t}m_{k}(s)ds\right) }{\left( u\int_{0}^{t}m_{k}(s)ds\right) }.
\end{equation*}
The orthogonal basis consists of Legendre polynomials in this case. Let us
consider the set of Legendre polynomials $L_{k}$ on $\left[ -1,1\right] $
defined by the Rodrigues formula 
\begin{equation}
L_{n}(\eta )=\frac{(-1)^{n}}{2^{n}n!}\frac{d^{n}}{d\eta ^{n}}\left[ (1-\eta
^{2})^{n}\right] .  \label{legendre1}
\end{equation}%
For any $n,m$, 
\begin{equation}
\frac{1}{2\sqrt{3}}\int_{-\sqrt{3}}^{\sqrt{3}}L_{n}(\frac{x}{\sqrt{3}})L_{m}(%
\frac{x}{\sqrt{3}})dx=\frac{1}{2}\int_{-1}^{1}L_{n}(\eta )L_{m}(\eta )d\eta =%
\frac{1}{(2n+1)}\delta _{nm}\,.
\end{equation}%
Let $\tilde{B}_{n}=L_{n}\left( x/\sqrt{3}\right) $. To make it "standard" ($%
\mathbf{E}\left( B_{n}^{2}\right) =n!$)$,$we set%
\begin{equation*}
B_{0}=1,B_{n}=\sqrt{n!(2n+1)}\tilde{B}_{n}.
\end{equation*}%
In this case $\mathfrak{N}_{k}^{n}=\xi _{k}^{\diamond n}=B_{n}\left( \xi
_{k}\right) ,k\geq 1,n\geq 0$ and we get the whole system by (\ref{fo3}).
\end{itemize}

A sequence of independent driving random fields can be used to construct a
new one.

\subsubsection{Poisson random fields}

Let $N$ be Poisson random measure on $U$ with $\mu $ as its Levy measure and 
$\tilde{N}=N-\mu $. We have isometry:%
\begin{equation*}
\mathbf{E}\left[ \tilde{N}(\varphi )^{2}\right] =\int_{U}\varphi ^{2}d\mu
,\varphi \in L_{2}\left( U,\mu \right) .
\end{equation*}

Let $m_{k}(x)$ be a CONS in $L_{2}(U,\nu )$ such that all $m_{k}$ and $%
\int_{U}|m_{k}(\upsilon )|~d\mu $ are bounded. In general, $\xi _{k}=\tilde{N%
}\left( m_{k}\right) $ are not independent.

Let $\mathcal{Z}$ be the set of all real-valued sequences $z=\left(
z_{k}\right) $ such that only the finite number of $z_{k}$ is not zero. For $%
z\in \mathcal{Z}$ set, $m=m_{z}=\sum_{k}z_{k}m_{k}$. Under the assumptions
above, the moment generating function%
\begin{eqnarray*}
M(z) &=&\mathbf{E}\exp \left\{ \tilde{N}(m_{z})\right\} \\
&=&\exp \left\{ \int_{U}\left[ e^{m_{z}\left( \upsilon \right)
}-1-m_{z}\left( \upsilon \right) \right] \mu \left( d\upsilon \right)
\right\}
\end{eqnarray*}%
exists and Assumption \textbf{B1} is satisfied.

\textbf{Charlier polynomials}

For small $z,$ let 
\begin{equation*}
p(z)=\exp \left\{ \int_{U}\ln [1+m_{z}(\upsilon )]N(d\upsilon
)-\int_{U}m_{z}(\upsilon )\mu (d\upsilon )\right\} .
\end{equation*}

For $\alpha \in J,$ we define Charlier polynomials as%
\begin{equation*}
C_{\alpha }=\frac{\partial ^{\alpha }}{\partial z^{\alpha }}%
p(z)|_{z=0},\alpha \in J.
\end{equation*}%
For example, if $|\alpha |=1,\alpha =\varepsilon _{k}$, then $C_{\alpha }=%
\tilde{N}\left( m_{k}\right) ;$ If $|\alpha |=2,\alpha =\varepsilon
_{k_{1}}+\varepsilon _{k_{2}}$, then%
\begin{eqnarray*}
C_{\alpha } &=&N\left( m_{k_{1}}\right) N\left( m_{k_{2}}\right)
-N(m_{k_{1}}m_{k_{2}})-N(m_{k_{1}})\mu (m_{k_{2}}) \\
&&-N(m_{k_{2}})\mu (m_{k1})+\mu (m_{k_{1}})\mu (m_{k_{2}}),
\end{eqnarray*}%
where $N(\phi )=\int_{U}\phi (x)~N(dx),\mu (\phi )=\int_{U}\phi (x)\mu (dx)$%
. Recall $\varepsilon _{k}=\left( \alpha _{i}\right) \in J$ and has all
components zero except $\alpha _{k}=1.$

It is a standard fact that Charlier polynomials are orthogonal: $\mathbf{E}%
C_{\alpha }C_{\alpha ^{\prime }}=\alpha !$ if $\alpha =\alpha ^{\prime }$
and zero otherwise. In general and contrary to the Hermite polynomials for
the Gaussian random variables, they are not polynomials of Poisson random
variables.

We define driving Poisson noise on $U$ as $\mathfrak{N}\left( f\right) =%
\tilde{N}\left( f\right) ,f\in L^{2}\left( U,d\mu \right) $. From our
general considerations above, it satisfies \textbf{B1, B2 }with $\mathfrak{N}%
_{\alpha }=C_{\alpha },\alpha \in J.$

\subsection{Generalized random variables and processes}

{Let }$E$ be a topological vector space and {\ }%
\begin{equation*}
\mathcal{D(}E\mathcal{)=}\left\{ v=\sum_{\alpha }v_{\alpha }\mathfrak{N}%
_{\alpha }:\text{ }v_{\alpha }\in E\text{ and only finite number of }%
v_{\alpha }\text{ are not zero}\right\} .
\end{equation*}

\begin{definition}
{A generalized $\mathcal{D}$-random variable with values in }$E${\ }with
Borel $\sigma $-algebra{\ is a formal series }$u=\sum_{\alpha }u_{\alpha
}\xi _{\alpha }${, where }$u_{\alpha }\in E.$
\end{definition}

{Denote the vector space of all generalized }$\mathcal{D}${-random variables
by }$\mathcal{D}^{\prime }=\mathcal{D}^{\prime }(E)${. }If $E=\mathbf{R}$ we
write simply $\mathcal{D},\mathcal{D}^{\prime }$. {The elements of }$%
\mathcal{D}${\ are the test random variables for }$\mathcal{D}^{\prime }.${\
We define the action of a generalized random variable }$u\in \mathcal{D}%
^{\prime }\left( E\right) ${\ on the test random variable }$v\in \mathcal{D}$%
{\ by }$\left\langle u,v\right\rangle =\sum_{\alpha }v_{\alpha }u_{\alpha }.$
If $Y$ is Hilbert and $u\in \mathcal{D}^{\prime }(Y),v\in \mathcal{D}\left(
E\right) $, then $\left\langle u,v\right\rangle =\sum_{\alpha }(v_{\alpha
},u_{\alpha })_{Y}.$

{For a sequence }$u^{n}\in \mathcal{D}^{\prime }${\ and }$u\in \mathcal{D}%
^{\prime }${, we say that }$u^{n}\rightarrow u${, if for every }$v\in 
\mathcal{D},$ $\left\langle u,v^{n}\right\rangle \rightarrow \left\langle
u,v\right\rangle .$ {This implies that }$u^{n}=\sum_{\alpha }u_{\alpha }^{n}%
\mathfrak{N}_{\alpha }\rightarrow u=\sum_{\alpha }u_{\alpha }\mathfrak{N}%
_{\alpha }${\ if and only if }$u_{\alpha }^{n}\rightarrow u_{\alpha }${\ as }%
$n\rightarrow \infty ${\ for all }$\alpha ${.}

\begin{remark}
\label{rem11}If $u=\sum_{\alpha }u_{\alpha }\mathfrak{N}_{\alpha }\in 
\mathcal{D}^{\prime }(E),~F$ is a vector space and $f:E\rightarrow F$ is a
linear map, then we define%
\begin{equation*}
f(u)=\sum_{\alpha }f(u_{\alpha })\mathfrak{N}_{\alpha }\in \mathcal{D}%
^{\prime }(F).
\end{equation*}
\end{remark}

\begin{definition}
{An\ }$E${-valued generalized }$\mathcal{D}${- field on a measurable space }$%
(B,\mathcal{B})$ {is a }$\mathcal{D}^{\prime }(E)${-valued function on }$B${%
\ such that for each }$x\in B,$%
\begin{equation*}
u(x)=\sum_{\alpha }u_{\alpha }(x)\mathfrak{N}_{\alpha }\in \mathcal{D}%
^{\prime }(E),
\end{equation*}%
where $u_{\alpha }(x)$ are deterministic measurable $E$-valued functions on $%
B$.
\end{definition}

We denote the linear space of all such fields by{\ }$\mathcal{D}^{\prime
}(B;E).$ If $E$ is a topological vector space and a generalized $\mathcal{D}$%
{-field} $u(x)$ is continuous on $B$ we write $u\in $ $C\mathcal{D}^{\prime
}(B;E)$ (note that $u(x)$ is continuous if and only if all coefficient
functions $u_{\alpha }$ on $B$ are continuous$)$. In particular, if $B=\left[
0,T\right] $, we say $u\left( t\right) $ is a generalized $\mathcal{D}$%
-process. {If there is no room for confusion, we will often say }$\mathcal{D}
${-process (}$\mathcal{D}${-random variable) instead of generalized }$%
\mathcal{D}${-process (generalized }$\mathcal{D}${-random variable).}

{If }$\left( B,\mathcal{B},\kappa \right) $ is a measure space and $E${\ is
a normed vector space, we denote}%
\begin{align*}
& L_{2}(\mathcal{D}^{\prime }\mathcal{(}B;E),\kappa ) \\
& =\{u(x)=\sum_{\alpha }u_{\alpha }(x)\mathfrak{N}_{\alpha }\in \mathcal{D}%
^{\prime }\mathcal{(}B;E):\int_{B}|u_{\alpha }(x)|_{E}^{2}d\kappa <\infty 
\text{,~}\alpha \in J\}.
\end{align*}

{For }$u(t)=\sum_{\alpha }u_{\alpha }(t)\mathfrak{N}_{\alpha }\in L_{1}(%
\mathcal{D}^{\prime }([0,T],E))${\ we define }$\int_{0}^{t}u(s)ds,0\leq
t\leq T,${\ in }$\mathcal{D}^{\prime }([0,T];E)${\ by}%
\begin{equation*}
\int_{0}^{t}u(s)ds=\sum_{\alpha }\left( \int_{0}^{t}u_{\alpha }(s)ds\right) 
\mathfrak{N}_{\alpha },0\leq t\leq T.
\end{equation*}

{If }$u(t)=\sum_{\alpha }u_{\alpha }(t)\mathfrak{N}_{\alpha }\in \mathcal{D}%
^{\prime }\mathcal{(}[0,T];E),$ then $u(t)$ is differentiable in $t$ if and
only if $u_{\alpha }(t)${\ are differentiable in }$t\,$. In that case{, }%
\begin{equation*}
\frac{d}{dt}u(t)=\dot{u}(t)=\sum_{\alpha }\dot{u}_{\alpha }(t)\mathfrak{N}%
_{\alpha }\in \mathcal{D}^{\prime }\mathcal{(}[0,T],E).
\end{equation*}

\section{\label{MSC}\textbf{Distribution Free} Skorokhod-Maliavin\textbf{\
Calculus}}

Let $\mathbf{H}^{\otimes n}=L_{2}\left( U^{n},\mathcal{U}^{\otimes n},\mu
_{n}\right) $, where $U^{n}=U\times \ldots \times U$ n-times, $\mu _{n}=\mu
^{\otimes n}.$ Let $\mathbf{H}^{\hat{\otimes}n}$ be the symmetric part of $%
\mathbf{H}^{\otimes n}:$ it is the set of all symmetric $\mu _{n}$-square
integrable functions om \thinspace $U^{n}.$ For $\alpha \in J$ with $%
\left\vert \alpha \right\vert =n$ we define its characteristic set $%
K_{\alpha }=\{k_{1},\ldots ,k_{n}\}$ with each $k$ represented in it by $%
\alpha _{k}$ copies. Let $\mathcal{G}_{n}$ \ be the group of permutations of 
$\left\{ 1,\ldots ,n\right\} $ and%
\begin{equation*}
E_{\alpha }=\sum_{\sigma \in \mathcal{G}_{n}}m_{k_{\sigma (1)}}\otimes
\ldots \otimes m_{k_{\sigma (n)}}.
\end{equation*}%
Then%
\begin{equation*}
e_{\alpha }=\frac{E_{\alpha }}{\sqrt{\alpha !\left\vert \alpha \right\vert !}%
},\alpha \in J,\left\vert \alpha \right\vert =n,
\end{equation*}%
is a CONS of the symmetric part of $\mathbf{H}^{\hat{\otimes}n}$. If $%
\,|p|=1 $ with $k$th component non zero, then $E_{p}=e_{p}=m_{k}$.

\subsection{\textbf{Multiple integrals, Wick product and Skorokhod integral }%
}

\textbf{Multiple integrals}

Now we construct multiple integrals with respect to $\mathfrak{N}$ on the
symmetric part $\mathbf{H}^{\hat{\otimes}n}$ ($\mathbf{H}^{\hat{\otimes}0}=%
\mathbf{R}$). Set for $\left\vert \alpha \right\vert =n\geq 1,$%
\begin{equation*}
I_{n}\left( E_{\alpha }\right) =n!\mathfrak{N}_{\alpha }.
\end{equation*}%
Let $Y$ be a Hilbert space and denote $\mathbf{H}^{\hat{\otimes}n}(Y)$ the
space of all $Y$-valued symmetric functions $v=\sum_{\left\vert \alpha
\right\vert =n}v_{\alpha }E_{\alpha }$ on $U^{n}$ such that%
\begin{equation*}
\left\vert v\right\vert _{\mathbf{H}^{\hat{\otimes}n}(Y)}^{2}=\int_{U^{n}}%
\left\vert v\left( r\right) \right\vert _{Y}^{2}d\mu _{n}=\sum_{\alpha
}\left\vert v_{\alpha }\right\vert _{Y}^{2}\left\vert \alpha \right\vert
!\alpha !<\infty .
\end{equation*}%
For $v=\sum_{\left\vert \alpha \right\vert =n}v_{\alpha }E_{\alpha }\in 
\mathbf{H}^{\hat{\otimes}n}(Y),$ we define its $n$-tuple integral as%
\begin{equation*}
I_{n}\left( v\right) =\sum_{\left\vert \alpha \right\vert =n}v_{\alpha
}I_{n}\left( E_{\alpha }\right) =\sum_{\left\vert \alpha \right\vert
=n}v_{\alpha }n!\mathfrak{N}_{\alpha },
\end{equation*}%
and $I_{0}\left( c\right) =c,c\in \mathbf{R}$. Note that 
\begin{equation*}
\int_{U^{n}}\left( \sum_{\left\vert \alpha \right\vert =n}v_{\alpha
}E_{\alpha }\right) ^{2}d\mu _{n}=\sum_{\left\vert \alpha \right\vert
=n}v_{\alpha }^{2}n!\alpha !
\end{equation*}%
and 
\begin{equation*}
\mathbf{E}\left[ |I_{n}(v)|_{Y}^{2}\right] =n!^{2}\sum_{\left\vert \alpha
\right\vert =n}|v_{\alpha }|_{Y}^{2}\alpha !=n!\left\vert v\right\vert _{%
\mathbf{H}^{\hat{\otimes}n}(Y)}^{2}.
\end{equation*}

Let $\mathcal{S(}Y\mathcal{)}$ be the space of all finite linear
combinations $\sum_{k}\frac{I_{k}\left( F_{k}\right) }{k!}$ with $F_{k}\in 
\mathbf{H}^{\hat{\otimes}k}(Y)$.

\begin{definition}
{A generalized }$\mathcal{S}${-random variable is a formal sum }%
\begin{equation*}
u=\sum_{k}\frac{I_{k}\left( F_{k}\right) }{k!}\text{ with }F_{k}\in \mathbf{H%
}^{\hat{\otimes}k}(Y).
\end{equation*}
We denote the set of all generalized $\mathcal{S}$-random variables by $%
\mathcal{S}^{\prime }\left( Y\right) $.
\end{definition}

The action of $u\in $ $\mathcal{S}^{\prime }\left( Y\right) $ on $v\in $ $%
\mathcal{S}\left( Y\right) $ is defined as 
\begin{equation*}
\left\langle u,v\right\rangle =\sum_{k}\int_{U^{k}}(u_{k},v_{k})_{Y}d\mu ,
\end{equation*}%
where $u=\sum_{k}I_{k}\left( u_{k}\right) /k!,v=\sum_{k}I_{k}\left(
v_{k}\right) /k!$ with $u_{k},v_{k}\in \mathbf{H}^{\hat{\otimes}k}(Y).$

\begin{definition}
{A generalized }$\mathcal{S}${- field on a measurable space }$(B,\mathcal{B}%
) $ {is a }$\mathcal{S}^{\prime }(Y)${-valued function on }$B${\ such that
for each }$x\in B,$%
\begin{equation*}
u(x)=\sum_{n}\frac{I_{n}(F_{n}(x))}{n!}\in \mathcal{S}^{\prime }(Y),
\end{equation*}%
where $F_{n}(x)=F_{n}(x;\upsilon _{1},\ldots ,\upsilon _{n})$ are
deterministic measurable $\mathbf{H}^{\hat{\otimes}n}(Y)$-valued functions
on $B$.
\end{definition}

We denote the linear space of all such fields by{\ }$\mathcal{S}^{\prime
}(B;E).$ If a generalized $\mathcal{S}${-field} $u(x)$ is continuous on $B$
we write $u\in $ $C\mathcal{S}^{\prime }(B;E)$ (note that $u(x)$ is
continuous if and only if all coefficient functions $F_{n}$ on $B$ (as $%
\mathbf{H}^{\hat{\otimes}k}(Y)$-valued functions) are continuous$)$. In
particular, if $B=\left[ 0,T\right] $, we say $u\left( t\right) $ is a
generalized $\mathcal{S}$-process.

\begin{remark}
\label{re2}Obviously, $\mathcal{D}\subseteq \mathcal{S}\left( \mathbf{R}%
\right) $ and $\mathcal{S}^{\prime }\left( Y\right) \subseteq \mathcal{D}%
^{\prime }(Y)$. In fact,%
\begin{equation*}
\mathcal{S}^{\prime }(Y)=\left\{ u=\sum_{\alpha }u_{\alpha }\mathfrak{N}%
_{\alpha }\in \mathcal{D}^{\prime }\left( Y\right) :\sum_{\left\vert \alpha
\right\vert =n}\left\vert u_{\alpha }\right\vert _{Y}^{2}\alpha !<\infty 
\text{ }\forall n\geq 1\right\} .
\end{equation*}

Indeed, if $u=\sum_{\alpha }u_{\alpha }\mathfrak{N}_{\alpha }$ with $%
\sum_{\left\vert \alpha \right\vert =n}\left\vert u_{\alpha }\right\vert
_{Y}^{2}\alpha !<\infty ,n\geq 1$, then%
\begin{equation*}
u_{n}=\sum_{\left\vert \alpha \right\vert =n}u_{\alpha }E_{\alpha }\in 
\mathbf{H}^{\hat{\otimes}n}(Y)
\end{equation*}%
and%
\begin{equation*}
u=\sum_{\alpha }u_{\alpha }\mathfrak{N}_{\alpha }=\sum_{n=0}^{\infty
}\sum_{\left\vert \alpha \right\vert =n}u_{\alpha }\mathfrak{N}_{\alpha
}=\sum_{n=0}^{\infty }\frac{I_{n}\left( u_{n}\right) }{n!}\in \mathcal{S}%
^{\prime }(Y).
\end{equation*}
\end{remark}

Note that for $\left\vert \alpha \right\vert =n,$%
\begin{equation*}
u_{\alpha }=\frac{1}{\alpha !n!}\int_{U^{n}}u_{n}(\upsilon )E_{\alpha
}\left( \upsilon \right) d\mu _{n},n\geq 1.
\end{equation*}

For $n\geq 0$, let $\mathcal{E}\mathbf{H}^{\hat{\otimes}n}$ be the space of
all finite linear combinations of $E_{\alpha },\left\vert \alpha \right\vert
=n$. The following statement provides some insight about the transition from 
$n$-tuple integral to an integral on $U^{n+1}.$

\begin{proposition}
\label{fp3}Let $f\in \mathcal{E}\mathbf{H}^{\hat{\otimes}n},g\in \mathcal{E}%
\mathbf{H}$, $f\otimes g=f\left( z\right) g\left( \upsilon \right) ,z\in
U^{n},\upsilon \in U,$ and let $\widetilde{f\otimes g}$ be the standard
symmetrization of $f\otimes g$. Then%
\begin{equation*}
I_{n+1}\left( \widetilde{f\otimes g}\right) =I_{n}\left( f\right)
I_{1}\left( g\right) -\text{projection}_{H_{n}}\left[ I_{n}\left( f\right)
I_{1}\left( g\right) \right] .
\end{equation*}
\end{proposition}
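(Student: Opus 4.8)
The plan is to reduce to basis elements by bilinearity and then to match the two sides through the monic orthogonalization recorded in \textbf{B2}. Both $I_n(\cdot)$ and the map $(f,g)\mapsto\widetilde{f\otimes g}$ are linear in $f$ and in $g$, and $\mathcal{E}\mathbf{H}^{\hat{\otimes}n}$, $\mathcal{E}\mathbf{H}$ are spanned by $\{E_\alpha:|\alpha|=n\}$ and $\{m_k\}$; hence it suffices to treat $f=E_\alpha$ and $g=m_k$. Put $\beta=\alpha+\varepsilon_k$, so $|\beta|=n+1$. With this reduction both sides become scalar multiples of the single vector $\mathfrak{N}_\beta$, and the whole proposition amounts to the claim that the two scalars agree.

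First I would evaluate the left-hand side. The characteristic set of $\beta$ is $K_\alpha\cup\{k\}$, so comparing $E_\beta=\sum_{\sigma\in\mathcal{G}_{n+1}}m_{k_{\sigma(1)}}\otimes\cdots\otimes m_{k_{\sigma(n+1)}}$ with the symmetrization of $E_\alpha\otimes m_k$ shows that $\widetilde{E_\alpha\otimes m_k}=\tfrac{1}{n+1}E_\beta$: because $E_\alpha$ is already symmetric in its $n$ slots, full symmetrization over $\mathcal{G}_{n+1}$ simply absorbs the partial symmetrization, and the scalar is found by counting permutations. Applying $I_{n+1}$ and using $I_{n+1}(E_\beta)=(n+1)!\,\mathfrak{N}_\beta$ gives $I_{n+1}(\widetilde{E_\alpha\otimes m_k})=n!\,\mathfrak{N}_\beta$.

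Next I would expand the right-hand side. By definition $I_n(E_\alpha)=n!\,\mathfrak{N}_\alpha$ and $I_1(m_k)=\mathfrak{N}_{\varepsilon_k}=\xi_k$, so $I_n(f)I_1(g)=n!\,\mathfrak{N}_\alpha\xi_k$. The structural input is \textbf{B2}, which gives $\mathfrak{N}_\alpha=c_\alpha\xi^\alpha+h_\alpha$ with $h_\alpha\in H_{n-1}$. Multiplying by $\xi_k$ turns the leading term into $c_\alpha\xi^\beta$ and sends $h_\alpha\xi_k$ into $H_n$ (a combination of $\xi^{p+\varepsilon_k}$, $|p|\le n-1$). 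Therefore the projection onto $H_n$ annihilates $h_\alpha\xi_k$, and $I_n(f)I_1(g)-\text{proj}_{H_n}[I_n(f)I_1(g)]=n!\,c_\alpha\big(\xi^\beta-\text{proj}_{H_n}\xi^\beta\big)=n!\,c_\alpha\tilde K_\beta=n!\,\tfrac{c_\alpha}{c_\beta}\mathfrak{N}_\beta$, using $\tilde K_\beta=\xi^\beta-\text{proj}_{H_n}\xi^\beta$ and $\mathfrak{N}_\beta=c_\beta\tilde K_\beta$ from \textbf{B2}.

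The hard part will be the scalar bookkeeping: the left side carries the factor $n!$ while the right side carries $n!\,c_\alpha/c_\beta$, so the proposition reduces to the assertion that the normalizing constants satisfy $c_\alpha=c_\beta$ whenever $\beta=\alpha+\varepsilon_k$. This is exactly the point at which the monic form of $\tilde K_\gamma$ in \textbf{B2} must be reconciled with the normalization $\mathbf{E}[\mathfrak{N}_\gamma^2]=\gamma!$; it is immediate in the Gaussian case, and in general one must check that the construction of Section 2 is consistent at this level. Once the two scalars are matched for $f=E_\alpha$, $g=m_k$, linearity restores arbitrary $f\in\mathcal{E}\mathbf{H}^{\hat{\otimes}n}$ and $g\in\mathcal{E}\mathbf{H}$.
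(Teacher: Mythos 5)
Your argument is the paper's own argument, written out with the scalars made explicit, and it stalls at exactly the step that carries all the content. The paper's proof reduces, just as you do, to basis elements, and then finishes by invoking the identity $\mathfrak{N}_{p+p'}=\mathfrak{N}_{p}\mathfrak{N}_{p'}-\text{projection}_{H_{n}}\left[ \mathfrak{N}_{p}\mathfrak{N}_{p'}\right] $ for $\left\vert p\right\vert =n,\left\vert p'\right\vert =1$, which it asserts without justification; your computation shows that this identity is equivalent to the scalar equality $c_{\alpha }=c_{\alpha +\varepsilon _{k}}$ of the normalizing constants from \textbf{B2}. You then declare this equality to be ``the hard part'' that ``one must check'' and never check it. That is a genuine gap, not a verification left to the reader: without $c_{\alpha }=c_{\beta }$ nothing has been proved, since both sides of the proposition are multiples of $\mathfrak{N}_{\beta }$ and the whole claim is that the multiples agree.

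Moreover, the missing step cannot be supplied from \textbf{B1}--\textbf{B2} alone, because $c_{\alpha }=c_{\alpha +\varepsilon _{k}}$ is false in general. Take the one-dimensional setting $U=\left\{ 1\right\} ,\mu =\delta _{1},m_{1}=1$, with $\xi $ uniform on $[-\sqrt{3},\sqrt{3}]$, and $n=1$, $f=g=m_{1}$. Then $\tilde{K}_{2}=\xi ^{2}-\text{projection}_{H_{1}}\xi ^{2}=\xi ^{2}-1$ and $\mathbf{E}\left[ \tilde{K}_{2}^{2}\right] =\mathbf{E}\xi ^{4}-1=\frac{4}{5}$, so the normalization $\mathbf{E}\left[ \mathfrak{N}_{2}^{2}\right] =2!$ forces $c_{2}=\sqrt{5/2}$, whereas $c_{1}=1$. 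Consequently the left side of the proposition is $I_{2}\left( \widetilde{m_{1}\otimes m_{1}}\right) =\mathfrak{N}_{2}=\sqrt{5/2}\left( \xi ^{2}-1\right) $, while the right side is $\xi ^{2}-1$: they differ by the factor $c_{2}$. (This is precisely the Legendre example of Section 2: the standardized Legendre polynomial $B_{2}=\sqrt{10}\,\tilde{B}_{2}=\sqrt{5/2}\left( \xi ^{2}-1\right) $ is not monic, which is the discrepancy.) So the obstruction you isolated is real: the projection formula, and the paper's unproven identity for $\mathfrak{N}_{p+p'}$, hold exactly when $\mathbf{E}\left[ \tilde{K}_{\alpha }^{2}\right] =\alpha !$ for every $\alpha $ --- i.e. $c_{\alpha }\equiv 1$, as for Hermite polynomials in the Gaussian case --- and fail otherwise. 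Your bookkeeping is correct and in fact exposes a gap in the paper's own proof, but as it stands your proposal does not prove the statement, and no completion of it can succeed in the stated distribution-free generality without an extra hypothesis on the constants $c_{\alpha }$.
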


\begin{proof}
Let 
\begin{equation*}
f=\sum_{\left\vert p\right\vert =n}f_{p}E_{p},g=\sum_{\left\vert
p\right\vert =1}g_{p}E_{p}.
\end{equation*}%
Then%
\begin{eqnarray*}
fg &=&\sum_{p,p^{\prime }}f_{p}g_{p^{\prime }}E_{p}E_{p^{\prime }}, \\
\widetilde{f\otimes g} &=&\sum_{p,p^{\prime }}f_{p}g_{p^{\prime }}\widetilde{%
E_{p}E_{p^{\prime }}}=\frac{1}{n+1}\sum_{p,p^{\prime }}f_{p}g_{p^{\prime
}}E_{p+p^{\prime }}
\end{eqnarray*}%
and%
\begin{eqnarray*}
I_{n+1}\left( \widetilde{f\otimes g}\right) &=&\frac{1}{n+1}%
\sum_{p,p^{\prime }}f_{p}g_{p^{\prime }}I_{n+1}(E_{p+p^{\prime
}})=n!\sum_{p,p^{\prime }}f_{p}g_{p^{\prime }}\mathfrak{N}_{p+p^{\prime }},
\\
I_{n}\left( f\right) &=&n!\sum_{p}f_{p}\mathfrak{N}_{p},I_{1}\left( g\right)
=\sum_{p^{\prime }}g_{p^{\prime }}\mathfrak{N}_{p^{\prime }}
\end{eqnarray*}%
Since%
\begin{equation*}
\mathfrak{N}_{p+p^{\prime }}=\mathfrak{N}_{p}\mathfrak{N}_{p^{\prime }}-%
\text{projection}_{H_{n}}\left[ \mathfrak{N}_{p}\mathfrak{N}_{p^{\prime }}%
\right] ,
\end{equation*}%
it follows that%
\begin{eqnarray*}
I_{n+1}\left( \widetilde{f\otimes g}\right) &=&n!\sum_{p,p^{\prime
}}f_{p}g_{p^{\prime }}[\mathfrak{N}_{p}\mathfrak{N}_{p^{\prime }}-\text{%
projection}_{H_{n}}\left( \mathfrak{N}_{p}\mathfrak{N}_{p^{\prime }}\right) ]
\\
&=&I_{n}(f)I_{1}\left( g\right) -\text{projection}_{H_{n}}\left[
I_{n}(f)I_{1}\left( g\right) \right] .
\end{eqnarray*}
\end{proof}

\begin{remark}
If $U=[0,T],d\mu =dt$ and $m_{1}=\chi _{(a,b)}$ (an interval of unit
length), then according to Proposition \ref{fp3}, the "measure" of the square%
\begin{equation*}
I_{2}\left( \chi _{(a,b)}^{\otimes 2}\right) =I_{1}\left( \chi
_{(a,b)}\right) ^{2}-\text{projection}_{H_{1}}\left[ I_{1}\left( \chi
_{(a,b)}\right) ^{2}\right] .
\end{equation*}
\end{remark}

\textbf{Wick product and Skorokhod integral. }We define Wick product%
\begin{equation*}
\mathfrak{N}_{\alpha }\diamond \mathfrak{N}_{\beta }=\mathfrak{N}_{\alpha
+\beta },1\diamond \mathfrak{N}_{\alpha }=\mathfrak{N}_{\alpha },\alpha
,\beta \in J.
\end{equation*}%
For $u=\sum_{\alpha }u_{\alpha }\mathfrak{N}_{\alpha },v=\sum_{\alpha
}v_{\alpha }\mathfrak{N}_{\alpha }\in \mathcal{D}^{\prime }\left( E\right) $
($E$ is Hilbert),%
\begin{equation*}
u\diamond v=\sum_{\alpha }\sum_{\beta \leq \alpha }(u_{\beta },v_{\alpha
-\beta })_{E}\mathfrak{N}_{\alpha }.
\end{equation*}

For a generalized random field on $u=\sum_{\alpha }u_{\alpha }\mathfrak{N}%
_{\alpha }\in \mathcal{D}^{\prime }\left( \mathbf{H}\left( Y\right) \right)
, $ we define its Skorokhod integrals: 
\begin{equation*}
\delta _{p}(u)=\int_{U}u(\upsilon )\diamond \mathfrak{N}_{p}\left( d\upsilon
\right) =\sum_{\alpha }\int_{U}a_{\alpha }(\upsilon )E_{p}(\upsilon )d\mu 
\mathfrak{N}_{\alpha +p},\left\vert p\right\vert =1,
\end{equation*}%
and 
\begin{eqnarray*}
\delta (u) &=&\int_{U}u\left( \upsilon \right) \diamond \mathfrak{N}\left(
d\upsilon \right) =\int_{U}u\left( \upsilon \right) \diamond \mathfrak{\dot{N%
}}\left( \upsilon \right) \mu \left( d\upsilon \right) =\sum_{\left\vert
p\right\vert =1}\delta _{p}\left( u\right) \\
&=&\sum_{\alpha }\sum_{\left\vert p\right\vert =1}\int_{U}a_{\alpha
}(x)E_{p}(x)d\mu \mathfrak{N}_{\alpha +p}=\sum_{|\alpha |\geq
1}\sum_{\left\vert p\right\vert =1}\int_{U}a_{\alpha -p}(x)E_{p}(x)d\mu 
\mathfrak{N}_{\alpha }.
\end{eqnarray*}%
Note that for a deterministic $u=u\mathfrak{N}_{0}\in \mathbf{H}\left(
Y\right) ,$%
\begin{equation*}
\delta _{\varepsilon _{k}}(u)=\int_{U}u(\upsilon )m_{k}(\upsilon )d\mu 
\mathfrak{N}_{\varepsilon _{k}},\delta \left( u\right)
=\sum_{k}\int_{U}u(\upsilon )m_{k}(\upsilon )d\mu \mathfrak{N}_{\varepsilon
_{k}}=\mathfrak{N}\left( u\right) .
\end{equation*}

Now, we describe Skorokhod integral in terms of the multiple integrals $%
I_{n} $. We show that $\delta :\mathcal{S}\rightarrow \mathcal{S}$ and $%
\delta :\mathcal{S}^{\prime }\rightarrow \mathcal{S}^{\prime }.$

\begin{proposition}
\label{prop1}Let 
\begin{equation*}
u=\sum_{n=0}^{\infty }\frac{1}{n!}I_{n}\left( u_{n}\right) \in \mathcal{S}%
^{\prime }\left( L^{2}\left( U,d\mu \right) \right) ,
\end{equation*}%
i.e., 
\begin{equation*}
u_{n}=u_{n}\left( \upsilon ;\upsilon _{1},\ldots ,\upsilon _{n}\right)
=\sum_{\left\vert \alpha \right\vert =n}u_{\alpha }(\upsilon )E_{\alpha
}\left( \upsilon _{1},\ldots ,\upsilon _{n}\right) ,
\end{equation*}%
with%
\begin{equation*}
\sum_{|\alpha |=n}\int \left\vert u_{\alpha }\left( \upsilon \right)
\right\vert ^{2}d\mu \alpha !<\infty \text{ }\forall n\text{.}
\end{equation*}%
Then 
\begin{equation*}
\delta \left( u\right) =\int u\left( \upsilon \right) \diamond \mathfrak{N}%
\left( d\upsilon \right) =\sum_{n=0}^{\infty }\frac{1}{n!}I_{n+1}(\tilde{F}%
_{n}),
\end{equation*}%
where $\tilde{F}_{n}$ is the standard symmetrization of $F_{n}$ on $U^{n+1}$.
\end{proposition}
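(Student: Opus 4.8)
The plan is to prove the identity coefficient-by-coefficient in the orthogonal system $\{\mathfrak{N}_{\beta}\}$, since two elements of $\mathcal{S}^{\prime}$ (resp. $\mathcal{D}^{\prime}$) coincide precisely when all of their propagator coefficients agree. First I would read off the left-hand side directly from the definition of $\delta$. Writing $u(\upsilon )=\sum_{\alpha }u_{\alpha }(\upsilon )\mathfrak{N}_{\alpha }$ and re-indexing by $\beta =\alpha +p$ with $\left\vert p\right\vert =1$ (so that $E_{p}=m_{k}$ when $p=\varepsilon _{k}$), one obtains
\[
\delta (u)=\sum_{\left\vert \beta \right\vert \geq 1}\Big(\sum_{k:\beta _{k}\geq 1}\int_{U}u_{\beta -\varepsilon _{k}}(\upsilon )m_{k}(\upsilon )\,d\mu \Big)\mathfrak{N}_{\beta }.
\]
Thus the goal is to show that the $\beta $-coefficient of $\sum_{n}\frac{1}{n!}I_{n+1}(\tilde{F}_{n})$ equals the inner sum above, where $F_{n}(\upsilon _{1},\ldots ,\upsilon _{n},\upsilon )=u_{n}(\upsilon ;\upsilon _{1},\ldots ,\upsilon _{n})$ is $u_{n}$ regarded as a (not yet symmetric) function on $U^{n+1}$ with the Skorokhod variable appended.

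Next I would expand the right-hand side. For $\left\vert \beta \right\vert =n+1$, the coefficient of $\mathfrak{N}_{\beta }$ in $I_{n+1}(\tilde{F}_{n})$ is $(n+1)!\,(\tilde{F}_{n})_{\beta }$, where $(\tilde{F}_{n})_{\beta }=\frac{1}{\beta !(n+1)!}\int_{U^{n+1}}\tilde{F}_{n}E_{\beta }\,d\mu _{n+1}$; since $E_{\beta }$ is symmetric, $\tilde{F}_{n}$ may be replaced by $F_{n}$ in this integral. Collecting the factorial prefactors, the coefficient of $\mathfrak{N}_{\beta }$ in $\frac{1}{n!}I_{n+1}(\tilde{F}_{n})$ simplifies to $\frac{1}{n!\,\beta !}\int_{U^{n+1}}F_{n}E_{\beta }\,d\mu _{n+1}$, so the problem reduces to evaluating this last integral.

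The crux is a combinatorial reduction formula for $E_{\beta }$ under the integral. Grouping the $(n+1)!$ permutations defining $E_{\beta }$ according to the index assigned to the last slot $\upsilon $, and recording multiplicities, I expect
\[
E_{\beta }(\upsilon _{1},\ldots ,\upsilon _{n},\upsilon )=\sum_{k:\beta _{k}\geq 1}\beta _{k}\,m_{k}(\upsilon )\,E_{\beta -\varepsilon _{k}}(\upsilon _{1},\ldots ,\upsilon _{n}).
\]
Substituting $F_{n}=\sum_{\left\vert \alpha \right\vert =n}u_{\alpha }(\upsilon )E_{\alpha }$, factoring the $\upsilon $-integral from the $U^{n}$-integral, and invoking the orthogonality $\int_{U^{n}}E_{\alpha }E_{\alpha ^{\prime }}\,d\mu _{n}=\delta _{\alpha \alpha ^{\prime }}\alpha !\,n!$ (a restatement of the fact that $e_{\alpha }=E_{\alpha }/\sqrt{\alpha !\,\left\vert \alpha \right\vert !}$ is a CONS) collapses the $\alpha $-sum onto $\alpha =\beta -\varepsilon _{k}$. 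The elementary identity $\beta _{k}(\beta -\varepsilon _{k})!=\beta !$ then cancels against the prefactor $\frac{1}{n!\,\beta !}$ and yields exactly the inner sum appearing on the left, and summing over $n$ (equivalently over all $\beta $ with $\left\vert \beta \right\vert \geq 1$) completes the match. The main obstacle is the clean derivation of the reduction formula together with correct bookkeeping of the multiplicities $\beta _{k}$ and the factorials; once that identity is secured, the remaining steps are routine.
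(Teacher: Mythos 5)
Your proposal is correct, and it proves the identity by a genuinely different mechanism than the paper, even though both are coefficient computations. The paper never pairs against $E_{\beta}$: it computes the symmetrization $\tilde{F}_{n}$ \emph{explicitly} in the basis $\left\{ E_{\beta },\left\vert \beta \right\vert =n+1\right\} $, using the identity $\widetilde{E_{\alpha }\otimes E_{p}}=\frac{n!}{(n+1)!}E_{\alpha +p}$ (the same fact invoked in the proof of Proposition \ref{prop2}), and then matches the result term by term with the definition $\delta \left( u\right) =\sum_{\alpha }\sum_{\left\vert p\right\vert =1}\int_{U}u_{\alpha }E_{p}\,d\mu \,\mathfrak{N}_{\alpha +p}$ after rewriting $\mathfrak{N}_{\alpha +p}=I_{n+1}\left( E_{\alpha +p}\right) /(n+1)!$. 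You instead avoid computing $\tilde{F}_{n}$ altogether: you use that symmetrization can be dropped when integrated against the symmetric function $E_{\beta }$, and then apply the dual ``last-slot'' expansion
\begin{equation*}
E_{\beta }(\upsilon _{1},\ldots ,\upsilon _{n},\upsilon )=\sum_{k:\beta
_{k}\geq 1}\beta _{k}\,m_{k}(\upsilon )\,E_{\beta -\varepsilon _{k}}(\upsilon
_{1},\ldots ,\upsilon _{n})
\end{equation*}
together with the orthogonality $\int_{U^{n}}E_{\alpha }E_{\alpha ^{\prime
}}d\mu _{n}=\delta _{\alpha \alpha ^{\prime }}\alpha !\,n!$. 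These two combinatorial identities are adjoint to one another (your expansion is exactly what the paper's symmetrization identity becomes after transposition under the $\mu _{n+1}$-inner product), so the underlying permutation count is the same; your bookkeeping ($\beta _{k}(\beta -\varepsilon _{k})!=\beta !$, the prefactor $\frac{1}{n!\beta !}$, and the collapse onto $\alpha =\beta -\varepsilon _{k}$) all checks out against the paper's coefficient formula $\delta (u)=\sum_{\left\vert \beta \right\vert \geq 1}\sum_{k:\beta _{k}\geq 1}\int_{U}u_{\beta -\varepsilon _{k}}m_{k}\,d\mu \,\mathfrak{N}_{\beta }$. What each approach buys: the paper's route exhibits the symmetrized kernel in closed form (useful elsewhere, e.g.\ in Proposition \ref{prop4}, where the support of $\widetilde{\chi _{[0,t]}u_{n}}$ matters), while yours is slightly cleaner as a pure verification, since orthogonal projection onto the chaos coefficients replaces any explicit handling of the symmetrization.
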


\begin{proof}
According to Remark \ref{re2}, for $\left\vert \alpha \right\vert =n,$%
\begin{equation*}
u_{\alpha }\left( \upsilon \right) =\frac{1}{\alpha !n!}\int_{U^{n}}u_{n}(%
\upsilon ;\upsilon ^{\prime })E_{\alpha }\left( \upsilon ^{\prime }\right)
\mu _{n}\left( d\upsilon ^{\prime }\right)
\end{equation*}%
and%
\begin{equation*}
u=\sum_{\alpha }u_{\alpha }(\upsilon )\mathfrak{N}_{\alpha }.
\end{equation*}%
Note that%
\begin{equation*}
\int_{U\times U^{n}}u_{n}\left( \upsilon ;\upsilon ^{\prime }\right)
^{2}d\mu _{n+1}<\infty
\end{equation*}%
and%
\begin{eqnarray*}
\int_{U}u_{\alpha }(\upsilon )E_{p}(\upsilon )d\mu &=&\frac{1}{\alpha !n!}%
\int_{U}\int_{U^{n}}u_{n}(r;\upsilon ^{\prime })E_{p}(r)\mu \left( dr\right)
E_{\alpha }\left( \upsilon ^{\prime }\right) \mu _{n}\left( d\upsilon
^{\prime }\right) \\
&=&\frac{1}{\sqrt{\alpha !n!}}\int_{U^{n}}\int_{U}u_{n}(r;\upsilon ^{\prime
})E_{p}(r)\mu \left( dr\right) e_{\alpha }\left( \upsilon ^{\prime }\right)
\mu _{n}\left( d\upsilon ^{\prime }\right)
\end{eqnarray*}%
and%
\begin{equation*}
u_{n}(\upsilon ,\upsilon ^{\prime })=\sum_{\left\vert \alpha \right\vert
=n,\left\vert p\right\vert =1}\int_{U}u_{\alpha }(r)E_{p}(r)d\mu E_{p}\left(
\upsilon \right) E_{\alpha }(\upsilon ^{\prime }).
\end{equation*}%
Therefore, the standard symmetrization of $u_{n}(\upsilon ;,\upsilon
^{\prime }),\upsilon \in U,\upsilon ^{\prime }=\left( \upsilon _{1},\ldots
,\upsilon _{n}\right) \in U^{n}$, is 
\begin{equation*}
\tilde{u}_{n}(\upsilon ,\upsilon ^{\prime })=\sum_{\left\vert \alpha
\right\vert =n+1,\left\vert p\right\vert =1}\int_{U}u_{\alpha
-p}(r)E_{p}(r)d\mu \frac{n!}{(n+1)!}E_{\alpha }(\upsilon ,\upsilon ^{\prime
}).
\end{equation*}%
By definition of the Skorokhod integral$,$%
\begin{eqnarray*}
\delta \left( u\right) &=&\sum_{\alpha }\sum_{\left\vert p\right\vert
=1}\int_{U}a_{\alpha }(r)E_{p}(r)d\mu \mathfrak{N}_{\alpha +p} \\
&=&\sum_{n=0}^{\infty }\sum_{\left\vert \alpha \right\vert
=n}\sum_{\left\vert p\right\vert =1}\int_{U}a_{\alpha }(r)E_{p}(r)d\mu \frac{%
I_{n+1}(E_{\alpha +p})}{(n+1)!} \\
&=&\sum_{n=0}^{\infty }\sum_{\left\vert \alpha \right\vert
=n+1}\sum_{\left\vert p\right\vert =1}\int_{U}a_{\alpha -p}(r)E_{p}(r)d\mu 
\frac{I_{n+1}(E_{\alpha })}{(n+1)!}=\sum_{n=0}^{\infty }\frac{1}{n!}I_{n+1}(%
\tilde{F}_{n}),
\end{eqnarray*}%
and the statement follows.
\end{proof}

\textbf{Multiple Skorokhod integrals}

For symmetric $u=\sum_{\alpha }u_{\alpha }\mathfrak{N}_{\alpha }\in \mathcal{%
D}^{\prime }\left( \mathbf{H}^{\hat{\otimes}n}\right) ,$ with $u_{\alpha
}\in \mathbf{H}^{\hat{\otimes}n},$ and $\left\vert p\right\vert =n$, we
define%
\begin{eqnarray*}
\delta _{p}\left( u\right) &=&\sum_{\alpha }\int_{U^{n}}u_{\alpha }\frac{%
E_{p}}{p!}d\mu _{n}\mathfrak{N}_{\alpha +p}, \\
\delta ^{n}\left( u\right) &=&\sum_{\left\vert p\right\vert =n}\delta
_{p}\left( u\right) =\sum_{\alpha }\sum_{\left\vert p\right\vert
=n}\int_{U^{n}}u_{\alpha }\frac{E_{p}}{p!}d\mu _{n}\mathfrak{N}_{\alpha +p}.
\end{eqnarray*}%
Let $\delta ^{0}\left( u\right) =u_{0}$. Note that for a deterministic $%
u(x_{1},\ldots ,x_{n})=u(x_{1},\ldots ,x_{n})\mathfrak{N}_{0}$ in $\mathbf{H}%
^{\hat{\otimes}n},$%
\begin{eqnarray*}
\delta ^{n}\left( u\right) &=&\sum_{\left\vert p\right\vert =n}\int_{U^{n}}u%
\frac{E_{p}}{p!}d\mu _{n}\mathfrak{N}_{p}=\sum_{\left\vert p\right\vert
=n}\int_{U^{n}}u\frac{E_{p}}{p!}d\mu _{n}I_{n}\left( E_{p}\right) /n!, \\
\delta ^{n}\left( E_{p}\right) &=&\int_{U^{n}}E_{p}\frac{E_{p}}{p!}d\mu _{n}%
\mathfrak{N}_{p}=n!\mathfrak{N}_{p}=I_{n}(E_{p}),\delta ^{n}\left( u\right)
=I_{n}\left( u\right) .
\end{eqnarray*}

It is easy to show that for $u=\sum_{\alpha }u_{\alpha }\mathfrak{N}_{\alpha
}\in \mathcal{D}^{\prime }\left( \mathbf{H}^{\hat{\otimes}n}\right) ,n\geq
1, $ (with $a_{\alpha }\in \mathbf{H}^{\hat{\otimes}n}$)$,$ $\delta
^{n}\left( u\right) =\delta \left( \delta ^{n-1}\left( \tilde{u}\left(
\upsilon \right) \right) \right) $, where%
\begin{equation*}
\tilde{u}\left( \upsilon \right) =\tilde{u}(\upsilon ;\upsilon _{1},\ldots
,\upsilon _{n-1})=u(\upsilon ,\upsilon _{1},\ldots ,\upsilon
_{n-1}),\upsilon ,\upsilon _{i}\in U\text{.}
\end{equation*}

\begin{remark}
In the framework of a single r.v. $\xi $ (see Remark \ref{r2}),%
\begin{equation*}
\delta ^{k}\left( \mathfrak{N}_{n}\right) =\mathfrak{N}_{n+k},k\geq 1.
\end{equation*}
\end{remark}

\begin{lemma}
\label{le1}For a deterministic $u=u\mathfrak{N}_{0}$ in $\mathbf{H}^{\hat{%
\otimes}n},$%
\begin{equation*}
\mathbf{E}\left[ \delta ^{n}\left( u\right) ^{2}\right] =n!\int_{U^{n}}\left%
\vert u\right\vert ^{2}d\mu _{n}.
\end{equation*}
\end{lemma}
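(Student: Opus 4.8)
The plan is to reduce the assertion to the isometry for multiple integrals that has already been derived in this section. The crucial point, recorded in the computation immediately preceding the statement, is that on a deterministic integrand the $n$-fold Skorokhod integral agrees with the $n$-tuple multiple integral, $\delta^{n}(u)=I_{n}(u)$ for $u\in\mathbf{H}^{\hat{\otimes}n}$. Granting this, the lemma becomes nothing more than the specialization of the identity $\mathbf{E}[|I_{n}(v)|_{Y}^{2}]=n!\,|v|^{2}_{\mathbf{H}^{\hat{\otimes}n}(Y)}$ to the scalar case $Y=\mathbf{R}$.

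Concretely, I would first expand $u$ in the symmetric orthogonal basis as $u=\sum_{|\alpha|=n}u_{\alpha}E_{\alpha}$, so that $\delta^{n}(u)=I_{n}(u)=n!\sum_{|\alpha|=n}u_{\alpha}\mathfrak{N}_{\alpha}$. To justify $\delta^{n}(u)=I_{n}(u)$ it is enough, by linearity, to treat the basis vectors $E_{p}$ with $|p|=n$: there one has $\delta^{n}(E_{p})=\int_{U^{n}}E_{p}\,\frac{E_{p}}{p!}\,d\mu_{n}\,\mathfrak{N}_{p}=n!\,\mathfrak{N}_{p}=I_{n}(E_{p})$, using $\int_{U^{n}}E_{p}E_{p}\,d\mu_{n}=n!\,p!$, which is just the orthogonality of the $E$-system.

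Next I would use the normalization $\mathbf{E}[\mathfrak{N}_{\alpha}^{2}]=\alpha!$ from Assumption \textbf{B2} together with the orthogonality $\mathbf{E}[\mathfrak{N}_{\alpha}\mathfrak{N}_{\beta}]=0$ for $\alpha\neq\beta$ (Proposition \ref{p1}) to compute
\[
\mathbf{E}\left[\delta^{n}(u)^{2}\right]=\mathbf{E}\left[I_{n}(u)^{2}\right]=(n!)^{2}\sum_{|\alpha|=n}u_{\alpha}^{2}\,\alpha!.
\]
Comparing with the squared $\mathbf{H}^{\hat{\otimes}n}$-norm $\int_{U^{n}}|u|^{2}\,d\mu_{n}=\sum_{|\alpha|=n}u_{\alpha}^{2}\,n!\,\alpha!$ (here $|\alpha|!=n!$), the two sides differ by exactly one factor $n!$, which gives $\mathbf{E}[\delta^{n}(u)^{2}]=n!\int_{U^{n}}|u|^{2}\,d\mu_{n}$.

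I expect no genuine obstacle: once $\delta^{n}(u)=I_{n}(u)$ is in place, the claim is an immediate instance of the multiple-integral isometry, and both ingredients are already available from the text. The only care needed is the bookkeeping of the combinatorial factors $n!$, $\alpha!$, and $|\alpha|!=n!$ appearing in the $\mathbf{H}^{\hat{\otimes}n}$-norm and in the normalization $\mathbf{E}[\mathfrak{N}_{\alpha}^{2}]=\alpha!$, to confirm that they cancel down to the single factor $n!$ in the final identity.
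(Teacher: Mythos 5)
Your proposal is correct and in substance coincides with the paper's own proof: the paper writes $\delta^{n}(u)=\sum_{|p|=n}\int_{U^{n}}u\,\frac{E_{p}}{p!}\,d\mu_{n}\,\mathfrak{N}_{p}$ and computes $\mathbf{E}[\delta^{n}(u)^{2}]=\sum_{|p|=n}\bigl(\int_{U^{n}}u\,\frac{E_{p}}{p!}\,d\mu_{n}\bigr)^{2}p!=n!\int_{U^{n}}|u|^{2}\,d\mu_{n}$ by orthogonality of the $\mathfrak{N}_{p}$ and Parseval in the $E_{p}$-basis, which is exactly the computation you repackage as $\delta^{n}(u)=I_{n}(u)$ combined with the isometry $\mathbf{E}[|I_{n}(v)|^{2}]=n!\,|v|^{2}_{\mathbf{H}^{\hat{\otimes}n}}$. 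Both ingredients you invoke (the identification $\delta^{n}(E_{p})=n!\,\mathfrak{N}_{p}=I_{n}(E_{p})$ and the $I_{n}$-isometry) appear verbatim in the text preceding the lemma, and your bookkeeping $(n!)^{2}\sum_{|\alpha|=n}u_{\alpha}^{2}\alpha!=n!\sum_{|\alpha|=n}u_{\alpha}^{2}\,n!\,\alpha!=n!\int_{U^{n}}|u|^{2}\,d\mu_{n}$ is accurate.
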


\begin{proof}
Indeed,%
\begin{eqnarray*}
\delta ^{n}\left( u\right) &=&\sum_{\left\vert p\right\vert =n}\int_{U^{n}}u%
\frac{E_{p}}{p!}d\mu _{n}\mathfrak{N}_{p}, \\
\mathbf{E}\left[ \delta ^{n}\left( u\right) ^{2}\right] &=&\sum_{\left\vert
p\right\vert =n}(\int_{U^{n}}u\frac{E_{p}}{p!}d\mu
_{n})^{2}p!=n!\int_{U^{n}}\left\vert u\right\vert ^{2}d\mu _{n}.
\end{eqnarray*}
\end{proof}

\begin{remark}
\label{re1}We can rewrite Proposition \ref{p1} using multiple integrals. For
each $\eta \in L_{2}\left( \Omega ,\mathcal{F}^{0},\mathbf{P}\right) ,$%
\begin{equation*}
\eta =\sum_{\alpha }\eta _{\alpha }\mathfrak{N}_{\alpha }=\sum_{n=0}^{\infty
}\frac{1}{n!}\sum_{|\alpha |=n}\eta _{\alpha }I_{n}\left( E_{\alpha }\right)
=\sum_{n=0}^{\infty }\frac{1}{n!}I_{n}\left( \eta _{n}\right)
=\sum_{n=0}^{\infty }\frac{1}{n!}\delta ^{n}\left( \eta _{n}\right)
\end{equation*}%
with%
\begin{equation*}
\eta _{\alpha }=\frac{\mathbf{E}\left[ \eta \mathfrak{N}_{\alpha }\right] }{%
\alpha !},
\end{equation*}%
and%
\begin{equation*}
\eta _{n}=\eta _{n}\left( \upsilon \right) =\sum_{\left\vert \alpha
\right\vert =n}\eta _{\alpha }E_{\alpha }\left( \upsilon \right) .
\end{equation*}%
Note that%
\begin{equation*}
\eta _{\alpha }=\frac{1}{\alpha !n!}\int_{U^{n}}F_{n}(\upsilon )E_{\alpha
}\left( \upsilon \right) d\mu _{n},\alpha \in J.
\end{equation*}
\end{remark}

\subsection{\textbf{Malliavin derivative}}

We define 
\begin{equation*}
\mathbb{D}\mathfrak{N}_{\alpha }=\sum_{\left\vert p\right\vert =1,p\leq
\alpha }\frac{\alpha !}{(\alpha -p)!}\mathfrak{N}_{\alpha -p}E_{p}\left(
\upsilon \right) =\sum_{\gamma }\sum_{\left\vert p\right\vert =1,\gamma
+p=\alpha }\frac{(\gamma +p)!}{\gamma !}E_{p}\left( \upsilon \right) 
\mathfrak{N}_{\gamma }.
\end{equation*}%
For $u=\sum_{\alpha }u_{\alpha }\mathfrak{N}_{\alpha }\in \mathcal{D}$, 
\begin{eqnarray*}
\mathbb{D}_{k}u &=&\sum_{|\alpha |\geq 1}\alpha _{k}u_{\alpha }\mathfrak{N}%
_{\alpha (k)}m_{k}\left( \upsilon \right) ,\mathbb{D}_{p}u=\sum_{\alpha \geq
p}\frac{\alpha !}{(\alpha -p)!}u_{\alpha }\mathfrak{N}_{\alpha
-p}E_{p}\left( \upsilon \right) ,\left\vert p\right\vert =1, \\
\mathbb{D}u &=&\sum_{\left\vert p\right\vert =1}\mathbb{D}_{p}u=\sum_{\alpha
\geq p}\sum_{\left\vert p\right\vert =1}\frac{\alpha !}{(\alpha -p)!}%
u_{\alpha }\mathfrak{N}_{\alpha -p}E_{p}\left( \upsilon \right) \\
&=&\sum_{\alpha }\sum_{\left\vert p\right\vert =1}\frac{(\alpha +p)!}{\alpha
!}u_{\alpha +p}E_{p}\left( \upsilon \right) \mathfrak{N}_{\alpha }.
\end{eqnarray*}

In a standard way we define the higher order Malliavin derivatives: for $%
u=\sum_{\alpha }a_{\alpha }\mathfrak{N}_{\alpha }\in \mathcal{D}$, 
\begin{eqnarray*}
\mathbb{D}_{p}^{n}u &=&\sum_{\alpha \geq p}\frac{\alpha !}{(\alpha -p)!}%
u_{\alpha }\mathfrak{N}_{\alpha -p}\frac{E_{p}\left( \upsilon _{1},\ldots
,\upsilon _{n}\right) }{p!},\left\vert p\right\vert =n, \\
\mathbb{D}^{n}u &=&\mathbb{D}_{p}^{n}u=\sum_{\left\vert p\right\vert
=n}\sum_{\alpha \geq p}\frac{\alpha !}{(\alpha -p)!p!}u_{\alpha }\mathfrak{N}%
_{\alpha -p}E_{p}(\upsilon _{1},\ldots ,\upsilon _{n}) \\
&=&\sum_{\alpha }\sum_{\left\vert p\right\vert =n}\frac{(\alpha +p)!}{\alpha
!p!}u_{\alpha +p}E_{p}(\upsilon _{1},\ldots ,\upsilon _{n})\mathfrak{N}%
_{\alpha }.
\end{eqnarray*}

We define Malliavin derivative for multiple integrals as well.

\begin{proposition}
\label{prop2}Let $v=\sum_{\left\vert \alpha \right\vert =n}v_{\alpha
}E_{\alpha }\in \mathbf{H}^{\hat{\otimes}n}(Y)$ has only finite number of $%
v_{\alpha }\neq 0.$ Then%
\begin{equation*}
\mathbb{D}I_{n}\left( u\right) =nI_{n-1}\left( u\left( \cdot ,t\right)
\right) ,t\in U.
\end{equation*}
\end{proposition}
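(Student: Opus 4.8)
The plan is to reduce to the basis functions $E_{\alpha}$ and then verify the identity by matching the coefficient of each $\mathfrak{N}_{\beta}$, reading the free variable $t$ in the statement as the Malliavin-derivative argument $\upsilon$ and interpreting $v(\cdot,t)$ as $v$ with its last (equivalently, by symmetry, any) argument frozen at $t$; in particular the $u$ in the displayed identity should be read as $v$. Since $v=\sum_{|\alpha|=n}v_{\alpha}E_{\alpha}$ has only finitely many nonzero $v_{\alpha}\in Y$, both $\mathbb{D}$ and $I_{n-1}$ act term by term with no convergence issues, so by linearity it suffices to treat the scalar case $v=E_{\alpha}$ with $|\alpha|=n$, the coefficient $v_{\alpha}\in Y$ simply riding along.

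For the left-hand side, recall $I_{n}(E_{\alpha})=n!\,\mathfrak{N}_{\alpha}$, so using the definition of the Malliavin derivative together with $\tfrac{\alpha!}{(\alpha-\varepsilon_{k})!}=\alpha_{k}$ and $E_{\varepsilon_{k}}=m_{k}$, I obtain
\[
\mathbb{D}I_{n}(E_{\alpha})=n!\,\mathbb{D}\mathfrak{N}_{\alpha}=n!\sum_{k:\alpha_{k}>0}\alpha_{k}\,\mathfrak{N}_{\alpha-\varepsilon_{k}}\,m_{k}(\upsilon).
\]

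The heart of the argument is the combinatorial identity
\[
E_{\alpha}(\upsilon_{1},\dots,\upsilon_{n-1},\upsilon)=\sum_{k:\alpha_{k}>0}\alpha_{k}\,m_{k}(\upsilon)\,E_{\alpha-\varepsilon_{k}}(\upsilon_{1},\dots,\upsilon_{n-1}),
\]
which I would prove by partitioning the $n!$ permutations $\sigma\in\mathcal{G}_{n}$ in the definition of $E_{\alpha}$ according to the value $\ell=\sigma(n)$ landing in the last slot. Fixing $\sigma(n)=\ell$ contributes the scalar factor $m_{k_{\ell}}(\upsilon)$ and leaves $\sigma$ restricted to the first $n-1$ slots ranging over all orderings of the multiset $K_{\alpha}\setminus\{k_{\ell}\}=K_{\alpha-\varepsilon_{k_{\ell}}}$, which sums precisely to $E_{\alpha-\varepsilon_{k_{\ell}}}$; grouping the indices $\ell$ by their common value $k$ yields exactly $\alpha_{k}$ equal terms, which produces the stated multiplicity. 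Getting this multiplicity factor right is the only real obstacle, and one must also invoke the symmetry of $v$ to justify that freezing the last argument is the same as freezing any argument; the rest is bookkeeping from the definitions.

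Finally I would apply $I_{n-1}$ to this identity, using $I_{n-1}(E_{\alpha-\varepsilon_{k}})=(n-1)!\,\mathfrak{N}_{\alpha-\varepsilon_{k}}$ since $|\alpha-\varepsilon_{k}|=n-1$, to get
\[
n\,I_{n-1}\!\left(E_{\alpha}(\cdot,\upsilon)\right)=n\,(n-1)!\sum_{k:\alpha_{k}>0}\alpha_{k}\,m_{k}(\upsilon)\,\mathfrak{N}_{\alpha-\varepsilon_{k}}=n!\sum_{k:\alpha_{k}>0}\alpha_{k}\,\mathfrak{N}_{\alpha-\varepsilon_{k}}\,m_{k}(\upsilon),
\]
which coincides with the left-hand side computed above. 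Multiplying by $v_{\alpha}$ and summing over the finitely many nonzero terms recovers the general statement, completing the proof.
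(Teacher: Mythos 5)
Your proof is correct, and it sensibly repairs the statement's typos (the $u$ in the display is the $v$ of the hypothesis, $t$ is the Malliavin variable, and freezing the last rather than the first slot is harmless by symmetry); the reduction to $v=E_{\alpha}$ is legitimate since the finiteness assumption makes all sums finite. Your route, however, is not the paper's. The paper keeps a general $v$ throughout: it applies the definition of $\mathbb{D}$ to $I_{n}(v)=\sum_{|\alpha|=n}v_{\alpha}n!\,\mathfrak{N}_{\alpha}$, rewrites the resulting coefficients via the integral formula $v_{\alpha+p}=\frac{1}{(\alpha+p)!\,n!}\int v\,E_{\alpha+p}\,d\mu_{n}$, and then converts $\int v\,E_{\alpha+p}\,d\mu_{n}$ into coefficients of the frozen function $v(t,\cdot)$ on $U^{n-1}$ by means of the symmetrization identity $\widetilde{E_{\alpha}E_{p}}=\frac{1}{n}E_{\alpha+p}$ together with the expansion of $v$ in its first variable over $\{E_{p},\,|p|=1\}$ (the coefficient conventions of Remark \ref{re2}). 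You instead prove the pointwise lowering identity $E_{\alpha}(\cdot,\upsilon)=\sum_{k:\alpha_{k}>0}\alpha_{k}\,m_{k}(\upsilon)\,E_{\alpha-\varepsilon_{k}}(\cdot)$ by splitting the $n!$ permutations in the definition of $E_{\alpha}$ according to which index occupies the frozen slot, and then apply $I_{n-1}$ term by term. The two arguments rest on the same combinatorics --- your lowering identity is essentially the evaluation form of the paper's symmetrization identity --- but yours is more elementary and self-contained: it avoids the integral coefficient formulas entirely, makes the multiplicity factor $\alpha_{k}$ visible by direct counting, and localizes all the work in one transparent lemma, at the modest cost of an extra linearity-reduction step that the paper's computation on general $v$ does not need.
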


\begin{proof}
By definition,%
\begin{equation*}
I_{n}\left( v\right) =\sum_{\left\vert \alpha \right\vert =n}v_{\alpha
}I_{n}\left( E_{\alpha }\right) =\sum_{\left\vert \alpha \right\vert
=n}v_{\alpha }n!\mathfrak{N}_{\alpha }\in \mathcal{D}.
\end{equation*}

Since%
\begin{equation*}
v_{\alpha }=\frac{1}{\alpha !n!}\int_{U^{n}}vE_{\alpha }d\mu _{n},
\end{equation*}%
and for $t,\upsilon _{1},\ldots ,\upsilon _{n}\in U,$%
\begin{equation*}
v(t,\upsilon _{1},\ldots ,\upsilon _{n-1})=\sum_{\left\vert p\right\vert
=1}\int_{U}v(t^{\prime },\upsilon _{1},\ldots ,\upsilon _{n-1})E_{p}\left(
t^{\prime }\right) d\mu E_{p}\left( t\right)
\end{equation*}%
(it is a finite sum), its Malliavin derivative ($\left\{ \left( \alpha
,p\right) :\left\vert \alpha +p\right\vert =n,\left\vert p\right\vert
=1\right\} =\left\{ \left( \alpha ,p\right) :\left\vert \alpha \right\vert
=n-1,\left\vert p\right\vert =1\right\} $) is%
\begin{eqnarray*}
&&\mathbb{D}I_{n}\left( v\right) \\
&=&\sum_{\alpha }\sum_{\left\vert p\right\vert =1,\left\vert \alpha
+p\right\vert =n}\frac{\left( \alpha +p\right) !}{\alpha !}v_{\alpha
+p}n!E_{p}\left( \upsilon \right) \mathfrak{N}_{\alpha }=\sum_{\left\vert
\alpha \right\vert =n-1}\sum_{\left\vert p\right\vert =1}\frac{\left( \alpha
+p\right) !}{\alpha !}v_{\alpha +p}n!E_{p}\left( \upsilon \right) \mathfrak{N%
}_{\alpha } \\
&=&\sum_{\left\vert \alpha \right\vert =n-1}\sum_{\left\vert p\right\vert =1}%
\frac{1}{\alpha !}\int vE_{\alpha +p}d\mu _{n}E_{p}\left( \upsilon \right) 
\mathfrak{N}_{\alpha }=\sum_{\left\vert \alpha \right\vert
=n-1}\sum_{\left\vert p\right\vert =1}n\int v\frac{E_{\alpha }}{\alpha !}%
E_{p}d\mu _{n}E_{p}\left( t\right) \mathfrak{N}_{\alpha } \\
&=&\sum_{\left\vert \alpha \right\vert =n-1}n(n-1)!\int v(t,\cdot )\frac{%
E_{\alpha }}{\alpha !(n-1)!}d\mu _{n-1}\mathfrak{N}_{\alpha
}=\sum_{\left\vert \alpha \right\vert =n-1}n(n-1)!v_{\alpha }(t,\cdot )%
\mathfrak{N}_{\alpha } \\
&=&nI_{n-1}\left( v\left( \cdot ,t\right) \right) .
\end{eqnarray*}%
We used here that%
\begin{equation*}
\widetilde{E_{\alpha }E_{p}}=\frac{\left\vert \alpha \right\vert !}{%
\left\vert \alpha +p^{\prime }\right\vert !}E_{\alpha +p}=\frac{1}{n}%
E_{\alpha +p},
\end{equation*}%
where $\widetilde{f}$ is the symmetrization of $f$.
\end{proof}

As suggested by Proposition \ref{prop2}, for an arbitrary $%
v=\sum_{\left\vert \alpha \right\vert =n}v_{\alpha }E_{\alpha }\in \mathbf{H}%
^{\hat{\otimes}n}(Y)$ we define%
\begin{equation*}
\mathbb{D}I_{n}\left( v\right) =nI_{n-1}\left( v\left( y,\cdot \right)
\right) ,y\in U,
\end{equation*}%
$I_{0}\left( c\right) =c,c\in \mathbf{R}$. For $u=\sum_{n}\frac{I_{n}(u_{n})%
}{n!}\in \mathcal{S}^{\prime }\left( Y\right) $ we define%
\begin{equation*}
\mathbb{D}u\left( y\right) =\sum_{n}\frac{\mathbb{D}I_{n}(u_{n})}{n!}%
=\sum_{n}\frac{I_{n-1}(u_{n}(y,\cdot ))}{(n-1)!}
\end{equation*}

We see that $\mathbb{D}$ maps $\mathcal{S}\left( Y\right) $ into $\mathcal{S}%
\left( Y\right) $.

\begin{remark}
\label{r3}In the framework of a single r.v. $\xi $ (see Remark \ref{r2}),$%
\mathbb{D}^{k}\left( \xi ^{\diamond n}\right) =\frac{n!}{(n-k)!}\xi
^{\diamond (n-k)},k\geq 1.$
\end{remark}

\subsection{Adapted stochastic processes}

In this subsection we assume that $U=[0,T]\times V,\mathcal{U}=\mathcal{B}%
\left( [0,T]\right) \times \mathcal{V},d\mu =dtd\pi .$ Let 
\begin{equation}
u=\sum_{n}\frac{I_{n}\left( u_{n}\right) }{n!}\in \mathcal{S}^{\prime
}\left( Y\right) ,  \label{01}
\end{equation}%
with $u_{n}\in \mathbf{H}^{\hat{\otimes}n}(Y),n\geq 0$ ($u_{n}=u_{n}\left(
t_{1},\upsilon _{1},\ldots ,t_{n},\upsilon _{n}\right) ,\left(
t_{i},\upsilon _{i}\right) \in U,i=1,\ldots ,n$)$.$

For $t\in \lbrack 0,T]$, let $Q_{t}^{n}=\left( [0,t]\times V\right) ^{n}$.

\begin{definition}
Let $t_{0}\in \left[ 0,T\right] $. A random variable $u$ defined by $\left( 
\text{\ref{01}}\right) $ is called $\mathcal{F}_{t_{0}}$-measurable if for
each $n,$ supp$\left( u_{n}\right) \subseteq Q_{t_{0}}^{n}\,,\,$i.e. $\mu
_{n-1}$-a.e.%
\begin{equation*}
u_{n}\left( t_{1},\upsilon _{1},\ldots ,t_{n},\upsilon _{n}\right) =0\text{
if }t_{i}>t_{0}\text{ for some }i
\end{equation*}
\end{definition}

\begin{proposition}
\label{prop3}A random variable $u\in \mathcal{S}\left( Y\right) $ defined by 
$\left( \text{\ref{01}}\right) $ is $\mathcal{F}_{t_{0}}$- measurable iff%
\begin{equation*}
\mathbb{D}u\left( t,\upsilon \right) =0\text{ if }t>t_{0}\text{.}
\end{equation*}
\end{proposition}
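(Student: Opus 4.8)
The plan is to prove both implications by directly comparing the characteristic-set description of $\mathcal{F}_{t_0}$-measurability against the explicit coefficient formula for the Malliavin derivative. The key observation is that $\mathbb{D}u(t,\upsilon)$ is built from the coefficients $u_\alpha$, and the variable $(t,\upsilon)$ enters only through the factors $E_p(t,\upsilon)$ with $|p|=1$; thus vanishing of $\mathbb{D}u$ on $\{t>t_0\}$ is controlled by which basis functions $m_k$ appear with support outside $[0,t_0]\times V$. Throughout I will use the representations established earlier in the excerpt, namely that $u=\sum_\alpha u_\alpha \mathfrak{N}_\alpha$ with $u_n=\sum_{|\alpha|=n}u_\alpha E_\alpha$, and that by Proposition~\ref{prop2} and the subsequent definition, $\mathbb{D}I_n(u_n)=nI_{n-1}(u_n(y,\cdot))$.

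First I would reduce to a single homogeneous chaos. Since $u\in\mathcal{S}(Y)$ is a finite sum $u=\sum_n I_n(u_n)/n!$ and the multiple integrals $I_n$ live in mutually orthogonal chaos spaces $\{|\alpha|=n\}$, both conditions decouple across $n$: $u$ is $\mathcal{F}_{t_0}$-measurable iff $\operatorname{supp}(u_n)\subseteq Q_{t_0}^n$ for every $n$, and $\mathbb{D}u=0$ on $\{t>t_0\}$ iff $\mathbb{D}I_n(u_n)=0$ there for every $n$. Using $\mathbb{D}I_n(u_n)(t,\cdot)=nI_{n-1}(u_n(t,\cdot))$, and the fact that the map $v\mapsto I_{n-1}(v)$ from $\mathbf{H}^{\hat\otimes(n-1)}(Y)$ is injective (it vanishes iff $v=0$, since $\mathbf{E}|I_{n-1}(v)|_Y^2=(n-1)!|v|^2$ as computed in the excerpt), the condition $\mathbb{D}I_n(u_n)(t,\cdot)=0$ for a fixed $t$ is equivalent to $u_n(t,\upsilon;\upsilon_2,\dots,\upsilon_n)=0$ in $\mathbf{H}^{\hat\otimes(n-1)}(Y)$, i.e. for $\mu_{n-1}$-a.e. $(\upsilon_2,\dots,\upsilon_n)$.

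The core of the argument is then a symmetry bookkeeping step. By symmetry of $u_n$, requiring $u_n(t,\upsilon;\cdot)=0$ for a.e. argument whenever the \emph{first} time-coordinate exceeds $t_0$ is equivalent, again by symmetry, to $u_n(t_1,\upsilon_1,\dots,t_n,\upsilon_n)=0$ whenever \emph{any} $t_i>t_0$, which is exactly $\operatorname{supp}(u_n)\subseteq Q_{t_0}^n$. For the forward direction I would take $u$ to be $\mathcal{F}_{t_0}$-measurable, so $u_n$ is supported in $Q_{t_0}^n$; then for $t>t_0$ the section $u_n(t,\cdot)$ vanishes a.e. by the support condition, hence $\mathbb{D}I_n(u_n)(t,\cdot)=nI_{n-1}(u_n(t,\cdot))=0$. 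For the converse I would run the implication backwards: $\mathbb{D}u(t,\upsilon)=0$ for $t>t_0$ forces $I_{n-1}(u_n(t,\cdot))=0$, hence by injectivity $u_n(t,\cdot)=0$ in $\mathbf{H}^{\hat\otimes(n-1)}(Y)$ for a.e.\ such $t$, and by symmetry this is precisely the support condition.

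The main obstacle I anticipate is the careful handling of the symmetrization and the ``for almost every'' quantifiers: one must verify that vanishing of the symmetric function $u_n$ on the single slice $\{t_1>t_0\}$ (a.e.) is genuinely equivalent to vanishing on the union $\bigcup_i\{t_i>t_0\}$, and that the injectivity of $I_{n-1}$ is applied at a.e.\ fixed $t$ rather than merely in an integrated sense. Because $u\in\mathcal{S}(Y)$ consists of finite sums with $u_\alpha\neq 0$ for only finitely many $\alpha$, there are no convergence issues, so the argument is purely a finite-dimensional linear-algebra and measure-theoretic support computation; the only genuine care needed is matching the permutation-symmetry of $E_\alpha$ against the single distinguished coordinate produced by $\mathbb{D}$.
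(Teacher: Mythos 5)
Your proof is correct and takes essentially the same route as the paper: the paper likewise expands $\mathbb{D}u(t,\upsilon )=\sum_{n\geq 1}\frac{1}{(n-1)!}I_{n-1}\left( u_{n}(t,\upsilon ,\cdot )\right) $ and then uses the isometry $\mathbf{E}\left[ \mathbb{D}u\left( t,\upsilon \right) ^{2}\right] =\sum_{n}\int_{U^{n-1}}u_{n}\left( t,\upsilon ,\cdot \right) ^{2}d\mu _{n-1}$ (which simultaneously encodes your chaos-by-chaos decoupling and the injectivity of $I_{n-1}$) to conclude that $\mathbb{D}u$ vanishes for $t>t_{0}$ iff every section $u_{n}(t,\upsilon ,\cdot )$ vanishes there, matching the support condition by symmetry. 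Your write-up simply makes explicit the two steps the paper compresses — the orthogonal decoupling across chaos orders and the symmetry bookkeeping — so no substantive difference.
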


\begin{proof}
For $u\in \mathcal{S}\left( Y\right) $ defined by (\ref{01}), 
\begin{equation}
\mathbb{D}u=\sum_{n\geq 1}\frac{1}{(n-1)!}I_{n-1}\left( u_{n}(t,\upsilon
,\cdot \right) ,(t,\upsilon )\in U.  \label{02}
\end{equation}%
and%
\begin{eqnarray*}
\mathbf{E}\left[ \mathbb{D}u\left( t,\upsilon \right) ^{2}\right] &=&\sum_{n}%
\frac{1}{(n-1)!}\mathbf{E}[I_{n-1}\left( u_{n}\left( t,\upsilon ,\cdot
\right) \right) ^{2}] \\
&=&\sum_{n}\int_{U^{n-1}}u_{n}\left( t,\upsilon ,\cdot \right) ^{2}d\mu
_{n-1}=0
\end{eqnarray*}%
for $t>t_{0}$ iff $u_{n}(t,\upsilon ,\cdot )=0$ $\mu _{n-1}$-a.e for $%
t>t_{0} $.
\end{proof}

Now, we will introduce a notion of an adapted random process. Consider $u\in
S\left( U;Y\right) $, i.e,%
\begin{equation}
u\left( t,\upsilon \right) =\sum_{n}\frac{I_{n}\left( u_{n}\left( t,\upsilon
,\cdot \right) \right) }{n!}  \label{03}
\end{equation}%
with $u_{n}\left( t,\upsilon ,\cdot \right) \in \mathbf{H}^{\hat{\otimes}%
n}(Y)$ for all $(t,\upsilon )\in U$:%
\begin{equation}
u_{n}(t,\upsilon ,\cdot )=\sum_{\left\vert \alpha \right\vert =n}u_{\alpha
}(t,\upsilon )E_{\alpha }\left( \cdot \right) ,\left( t,\upsilon \right) \in
U.  \label{04}
\end{equation}

\begin{definition}
A random field $u(t,\upsilon )$ on $U$ defined by (\ref{03}) is called
adapted if supp$\left( u_{n}(t,\upsilon ,\cdot )\right) \subseteq
Q_{t}^{n},\upsilon \in V,$ for every $t\in \lbrack 0,T].$
\end{definition}

A straightforward consequence of Proposition \ref{prop3} is the following
claim.

\begin{corollary}
\label{co1}A random field $u\in S\left( U;Y\right) $ is adapted iff for each 
$t\in \lbrack 0,T]$, the Malliavin derivative $\mathbb{D}u(t,\upsilon
;s_{1},\upsilon _{1},\ldots ,s_{n},\upsilon _{n})=0,\upsilon \in V,$ if $%
s_{i}>t$ for some $i.$
\end{corollary}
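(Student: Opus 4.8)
The plan is to apply Proposition \ref{prop3} pointwise in the field parameter $(t,\upsilon)\in U$, identifying the field-time $t$ with the threshold $t_{0}$ of that proposition. Fix $(t,\upsilon)\in U$ and regard
\[
u(t,\upsilon)=\sum_{n}\frac{I_{n}\left( u_{n}(t,\upsilon ,\cdot )\right) }{n!}
\]
as a single random variable in $\mathcal{S}(Y)$ whose $n$-th chaos kernel is $u_{n}(t,\upsilon ,\cdot )\in \mathbf{H}^{\hat{\otimes}n}(Y)$. By the definition of adaptedness, $u$ is adapted precisely when, for every $t\in \lbrack 0,T]$ and every $\upsilon \in V$, one has $\mathrm{supp}\left( u_{n}(t,\upsilon ,\cdot )\right) \subseteq Q_{t}^{n}$ for all $n$. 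But this is exactly the statement that the random variable $u(t,\upsilon )$ is $\mathcal{F}_{t}$-measurable, i.e. the defining condition of $\mathcal{F}_{t_{0}}$-measurability with $t_{0}=t$.

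Next I would invoke Proposition \ref{prop3} with $t_{0}=t$ applied to the random variable $u(t,\upsilon )$. That proposition asserts that $u(t,\upsilon )$ is $\mathcal{F}_{t}$-measurable if and only if its Malliavin derivative, evaluated at the derivative variable $(s_{1},\upsilon _{1})\in U$, vanishes as a random variable whenever $s_{1}>t$. In kernel form,
\[
\mathbb{D}\left[ u(t,\upsilon )\right] (s_{1},\upsilon _{1})=\sum_{n\geq 1}\frac{1}{(n-1)!}I_{n-1}\big( u_{n}(t,\upsilon ;(s_{1},\upsilon _{1}),\cdot )\big),
\]
so the $(n-1)$-kernel carried by $\mathbb{D}\left[ u(t,\upsilon )\right]$ is $u_{n}(t,\upsilon ;(s_{1},\upsilon _{1}),(s_{2},\upsilon _{2}),\ldots ,(s_{n},\upsilon _{n}))$, with $(s_{1},\upsilon _{1})$ the frozen first argument and $(s_{2},\upsilon _{2}),\ldots ,(s_{n},\upsilon _{n})$ the integrated ones.

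The last step is the bookkeeping that turns the single-variable condition $s_{1}>t$ into the symmetric condition ``$s_{i}>t$ for some $i$''. Because each kernel $u_{n}(t,\upsilon ,\cdot )$ is symmetric in its $n$ arguments, vanishing of $u_{n}(t,\upsilon ;(s_{1},\upsilon _{1}),\cdot )$ for the first time-argument $s_{1}>t$ is equivalent to its vanishing whenever any of the time-arguments $s_{1},\ldots ,s_{n}$ exceeds $t$; this is precisely the assertion $\mathbb{D}u(t,\upsilon ;s_{1},\upsilon _{1},\ldots ,s_{n},\upsilon _{n})=0$ when $s_{i}>t$ for some $i$. Since $(t,\upsilon )$ was arbitrary, combining the equivalences over all parameters yields the stated characterization. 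The only point demanding care is not to conflate the field parameter $(t,\upsilon )$---which plays the role of the threshold $t_{0}$---with the Malliavin-derivative variable $(s_{1},\upsilon _{1})$; once these roles are kept distinct, the corollary is an immediate pointwise transcription of Proposition \ref{prop3}.
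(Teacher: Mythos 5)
Your proposal is correct and follows exactly the route the paper intends: the paper offers no separate argument, stating only that the corollary is ``a straightforward consequence of Proposition \ref{prop3}'', and your pointwise application of that proposition with $t_{0}=t$ to the random variable $u(t,\upsilon )$ is precisely that deduction. Your additional care in spelling out the symmetry bookkeeping (passing from the single condition $s_{1}>t$ on the derivative variable to ``$s_{i}>t$ for some $i$'' via the symmetry of the kernels $u_{n}(t,\upsilon ,\cdot )\in \mathbf{H}^{\hat{\otimes}n}(Y)$) is a sound and welcome elaboration of what the paper leaves implicit.
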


Given a random field $u(t,\upsilon )$ on $U=[0,T]\times V$, consider its
Skorokhod integral%
\begin{equation*}
\delta \left( u\right) _{t}=\delta \left( \chi _{\lbrack 0,t]}u\right)
=\int_{0}^{t}u(r,y)\diamond \mathfrak{N}(dr,dy),0\leq t\leq T.
\end{equation*}

\begin{proposition}
\label{prop4}Consider a random field $u\in S\left( \mathbf{H}\left( Y\right)
;Y\right) $, i.e. (\ref{03}) holds with 
\begin{equation*}
\int_{U^{n+1}}\left\vert u_{n}\right\vert _{Y}^{2}d\mu _{n+1}<\infty \text{ }%
\forall n.
\end{equation*}%
If it is adapted, then $\delta \left( u\right) _{t},0\leq t\leq T,$ is
adapted as well.
\end{proposition}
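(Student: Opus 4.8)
The plan is to reduce the adaptedness of the integral to a single support condition on the symmetrized kernels, and then to verify that condition by a direct permutation argument. Since $\delta(u)_t=\delta(\chi_{[0,t]}u)$, I would first regard $v(r,y)=\chi_{[0,t]}(r)\,u(r,y)$ as a new element of $S(\mathbf{H}(Y);Y)$ and apply Proposition \ref{prop1}. This represents the integral in chaos form as $\delta(u)_t=\sum_{n\geq 0}\frac{1}{n!}I_{n+1}(\tilde{F}_n)$, where $F_n$ is the kernel of $v$ viewed as a function of $n+1$ variables on $U^{n+1}$,
\[
F_n\big((r,y);(s_1,\upsilon_1),\ldots,(s_n,\upsilon_n)\big)=\chi_{[0,t]}(r)\,u_n\big(r,y;(s_1,\upsilon_1),\ldots,(s_n,\upsilon_n)\big),
\]
and $\tilde{F}_n$ is its standard symmetrization over all $n+1$ arguments.

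Next I would recall that, by the definition of $\mathcal{F}_t$-measurability applied to a random variable in the form $\sum_{n}\frac{1}{n!}I_{n+1}(\tilde{F}_n)$, the variable $\delta(u)_t$ is $\mathcal{F}_t$-measurable precisely when $\operatorname{supp}(\tilde{F}_n)\subseteq Q_t^{n+1}$ for every $n$; that is, $\tilde{F}_n$ must vanish $\mu_{n+1}$-a.e.\ whenever some time component of its $n+1$ arguments exceeds $t$. Thus the whole proof reduces to checking this one support inclusion for each symmetrized kernel, for every fixed $t\in[0,T]$.

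The core computation is then the support analysis of $\tilde{F}_n$. Writing a generic point of $U^{n+1}$ as $\big((r_0,y_0),\ldots,(r_n,y_n)\big)$, the symmetrization is the average $\frac{1}{(n+1)!}\sum_{\sigma}F_n\big((r_{\sigma(0)},y_{\sigma(0)});(r_{\sigma(1)},y_{\sigma(1)}),\ldots,(r_{\sigma(n)},y_{\sigma(n)})\big)$ over all permutations $\sigma$ of the indices $\{0,1,\ldots,n\}$. In each summand the factor $\chi_{[0,t]}(r_{\sigma(0)})$ forces $r_{\sigma(0)}\leq t$, while the hypothesis that $u$ is adapted — which says $u_n(r,y,\cdot)$ is supported in $Q_r^n$, i.e.\ vanishes unless all its inner time variables are $\leq r$ — forces $r_{\sigma(i)}\leq r_{\sigma(0)}$ for $i=1,\ldots,n$. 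Combining the two constraints, every summand vanishes unless $r_{\sigma(j)}\leq t$ for all $j$, equivalently $r_j\leq t$ for all $j=0,\ldots,n$. Hence $\tilde{F}_n=0$ whenever any $r_j>t$, which is exactly $\operatorname{supp}(\tilde{F}_n)\subseteq Q_t^{n+1}$.

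I expect the only subtle point to be the bookkeeping in the symmetrization: one must confirm that the support constraint holds term-by-term in the sum over permutations, since the ``integration slot'' carrying the indicator is moved to a different position in each term. The observation that makes this work is that, no matter which variable plays the role of the base point, adaptedness chains the remaining $n$ time components below it while the indicator caps it at $t$, so all $n+1$ times are simultaneously bounded by $t$ in every surviving term. Once this is established, the support inclusion holds for all $n$, giving $\mathcal{F}_t$-measurability of $\delta(u)_t$ for each $t$, i.e.\ adaptedness of the process $\delta(u)_t$, $0\leq t\leq T$.
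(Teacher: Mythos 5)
Your proof is correct and takes essentially the same route as the paper's: both express $\delta(u)_t=\delta(\chi_{[0,t]}u)$ in chaos form via Proposition \ref{prop1} and then verify that the symmetrized kernels are supported in $Q_t^{n+1}$, which by definition gives $\mathcal{F}_t$-measurability for each $t$. The only difference is one of detail: the paper compresses your term-by-term permutation bookkeeping into the single identity $\chi_{[0,t]}u_n=u_n\chi_{Q_t^{n+1}}$ (adaptedness chains the inner times below the integration time, and the indicator caps that time at $t$) together with the observation that symmetrization preserves support inside the permutation-invariant set $Q_t^{n+1}$.
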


\begin{proof}
Since $u\left( t,\upsilon \right) =\sum_{n}\frac{I_{n}\left( u_{n}\left(
t,\upsilon ,\cdot \right) \right) }{n!}$ is adapted with $u_{n}$ satisfying (%
\ref{04}), supp$\left( u_{n}(t,\upsilon ,\cdot \right) )\subseteq
Q_{t}^{n},\upsilon \in V,$ for all $n$ and $t\in \lbrack 0,T]$, i.e $%
u_{n}\left( t,\upsilon \right) =u_{n}(t,\upsilon )\chi _{Q_{t}^{n}}$. By
Proposition \ref{prop1},%
\begin{equation*}
\delta \left( u\right) _{t}=\delta \left( \chi _{\lbrack 0,t]}u\right)
=\sum_{n=0}^{\infty }\frac{1}{n!}I_{n+1}(\widetilde{\chi _{\lbrack 0,t]}u_{n}%
}),
\end{equation*}%
where $\widetilde{\chi _{\lbrack 0,t]}u_{n}}$ is the standard symmetrization
of $\chi _{\lbrack 0,t]}u_{n}=u_{n}\chi _{Q_{t}^{n+1}}.$ Since its support
is obviously a subset of $Q_{t}^{n+1}$, the statement follows.
\end{proof}

\subsection{Ito-Skorokhod isometry}

Now we estimate the $L_{2}$-norm of the Skorokhod integral.

\begin{proposition}
\label{p2}Let $u=u\left( \upsilon \right) =\sum_{\alpha }u_{\alpha }\left(
\upsilon \right) \mathfrak{N}_{\alpha }\in L_{2}(\mathbb{D}\left( U;Y\right)
,d\mu ),$ i.e. $u_{\alpha }\in \mathbf{H}\left( Y\right) :$%
\begin{equation*}
\int |u_{a}\left( \upsilon \right) |_{Y}^{2}d\mu <\infty ,\alpha \in J,
\end{equation*}%
with a finite number of $u_{\alpha }\neq 0$. Then%
\begin{equation*}
\mathbf{E}\left[ |\delta (u)|_{Y}^{2}\right] =\mathbf{E}\left[
\int_{U}\left\vert u(\upsilon )\right\vert _{Y}^{2}d\mu \right] +\mathbf{E}%
\left[ \int_{U^{2}}(\mathbb{D}u(\upsilon ;\upsilon ^{\prime }),\mathbb{D}%
u(\upsilon ^{\prime };\upsilon ))_{Y}\mu (d\upsilon )\mu (d\upsilon ^{\prime
})\right] ,
\end{equation*}%
where%
\begin{equation*}
\mathbb{D}u(\upsilon ;\upsilon ^{\prime })=\sum_{\alpha }\sum_{\left\vert
p\right\vert =1}\frac{(\alpha +p)!}{\alpha !}u_{\alpha +p}(\upsilon
)E_{p}\left( \upsilon ^{\prime }\right) \mathfrak{N}_{\alpha }.
\end{equation*}
\end{proposition}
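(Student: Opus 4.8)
The plan is to expand everything in the complete orthogonal system $\{\mathfrak{N}_\alpha\}$ and reduce the identity to a purely combinatorial statement about the deterministic Fourier coefficients of $u$. First I would record the two facts that drive the computation: the orthogonality relation $\mathbf{E}[\mathfrak{N}_\alpha\mathfrak{N}_\beta]=\alpha!\,\delta_{\alpha\beta}$ from \textbf{B2}, and the expansion of each $Y$-valued coefficient $u_\alpha\in\mathbf{H}(Y)$ in the CONS $\{m_k\}$, namely $u_\alpha(\upsilon)=\sum_k c_{\alpha,k}m_k(\upsilon)$ with $c_{\alpha,k}=\int_U u_\alpha m_k\,d\mu\in Y$, so that $\int_U|u_\alpha|_Y^2\,d\mu=\sum_k|c_{\alpha,k}|_Y^2$ by Parseval. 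Since $E_p=m_k$ when $p=\varepsilon_k$, the Skorokhod integral becomes $\delta(u)=\sum_{|\alpha|\ge1}c_\alpha\mathfrak{N}_\alpha$ with $c_\alpha=\sum_{k:\alpha_k\ge1}c_{\alpha-\varepsilon_k,k}$, and orthogonality gives $\mathbf{E}[|\delta(u)|_Y^2]=\sum_{|\alpha|\ge1}\alpha!\,|c_\alpha|_Y^2$.

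Next I would expand $|c_\alpha|_Y^2=\sum_{k,l:\alpha_k,\alpha_l\ge1}(c_{\alpha-\varepsilon_k,k},c_{\alpha-\varepsilon_l,l})_Y$ and split the double sum into its diagonal $(k=l)$ and off-diagonal $(k\ne l)$ parts. For the diagonal part I would substitute $\beta=\alpha-\varepsilon_k$ and use $(\beta+\varepsilon_k)!=(\beta_k+1)\beta!$ to obtain $\sum_\beta\beta!\sum_k(\beta_k+1)|c_{\beta,k}|_Y^2$; writing $\beta_k+1=1+\beta_k$ splits this into $\sum_\beta\beta!\sum_k|c_{\beta,k}|_Y^2$, which by Parseval is exactly $\mathbf{E}[\int_U|u|_Y^2\,d\mu]$, the first term on the right, plus a remainder $\sum_\beta\beta!\sum_k\beta_k|c_{\beta,k}|_Y^2$.

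It then remains to show that the contraction term equals the sum of this remainder and the off-diagonal part. Using $(\alpha+\varepsilon_k)!/\alpha!=\alpha_k+1$ and orthogonality, I would compute that $\mathbf{E}[\int_{U^2}(\mathbb{D}u(\upsilon;\upsilon'),\mathbb{D}u(\upsilon';\upsilon))_Y\,\mu(d\upsilon)\mu(d\upsilon')]$ equals $\sum_\alpha\alpha!\sum_{k,l}(\alpha_k+1)(\alpha_l+1)(c_{\alpha+\varepsilon_k,l},c_{\alpha+\varepsilon_l,k})_Y$ after integrating out $\upsilon,\upsilon'$ against the $m_k$'s. Splitting again into $k=l$ and $k\ne l$: the $k=l$ part, after the substitution $\beta=\alpha+\varepsilon_k$ (so $\alpha!=\beta!/\beta_k$ and $\alpha_k+1=\beta_k$), collapses to $\sum_\beta\beta!\sum_k\beta_k|c_{\beta,k}|_Y^2$, matching the remainder exactly; the $k\ne l$ part, after the substitution $\alpha=\beta+\varepsilon_k+\varepsilon_l$ (using $(\beta+\varepsilon_k+\varepsilon_l)!=(\beta_k+1)(\beta_l+1)\beta!$ for $k\ne l$) and the trivial relabeling $k\leftrightarrow l$, reproduces the off-diagonal part. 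Summing the three matches yields the claimed identity.

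I expect the only real obstacle to be bookkeeping: keeping the factorial weights, the index shifts $\alpha\mapsto\alpha\pm\varepsilon_k$, and the constraints $\alpha_k\ge1$ mutually consistent, and in particular checking that for $k\ne l$ the factorial factors as $(\beta_k+1)(\beta_l+1)\beta!$ (which fails when $k=l$, exactly why the diagonal is peeled off separately). An alternative that absorbs some of this combinatorics into earlier results is to start from Proposition \ref{prop1}, write $\delta(u)=\sum_n\frac1{n!}I_{n+1}(\tilde F_n)$, and apply the multiple-integral isometry $\mathbf{E}[|I_{n+1}(v)|_Y^2]=(n+1)!\,|v|_{\mathbf{H}^{\hat\otimes(n+1)}(Y)}^2$ together with Proposition \ref{prop2}; expanding the squared norm of the symmetrization $\tilde F_n$ into its ``diagonal'' and ``cross'' contractions produces the two terms directly. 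Either route leads to the same combinatorial core.
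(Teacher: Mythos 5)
Your proposal is correct and follows essentially the same route as the paper's proof: expand $\delta(u)$ in the orthogonal system $\{\mathfrak{N}_\alpha\}$, use orthogonality to reduce $\mathbf{E}\left[|\delta(u)|_Y^2\right]$ to a sum over coefficients, split the double sum over unit multiindices into diagonal and off-diagonal parts (the paper's $A$ and $B$), peel off the It\^o-isometry term via the factorial identity $(\gamma+p)!=\gamma!+[(\gamma+p)!-\gamma!]$ together with Parseval, and match the remainder and off-diagonal pieces against the corresponding split (the paper's $C$ and $D$) of the Malliavin contraction term. Your $c_{\alpha,k}$, $\varepsilon_k$ notation is just the paper's $\int_U u_\alpha E_p\,d\mu$, $|p|=1$ in different clothing, so the combinatorial core is identical.
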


\begin{proof}
By definition,%
\begin{equation*}
\delta (u)=\sum_{|\alpha |\geq 1}\sum_{\left\vert p\right\vert
=1}\int_{U}u_{\alpha -p}(x)E_{p}(x)d\mu \mathfrak{N}_{\alpha }=\sum_{\alpha
}\sum_{\left\vert p\right\vert =1}\int_{U}u_{\alpha }(r)E_{p}(r)d\mu 
\mathfrak{N}_{\alpha +p}.
\end{equation*}%
Hence%
\begin{eqnarray*}
\mathbf{E}\left[ |\delta (u)|_{Y}^{2}\right] &=&\sum_{|\alpha |\geq
1}\left\vert \sum_{\left\vert p\right\vert =1}\int_{U}u_{\alpha
-p}(x)E_{p}(x)d\mu \right\vert _{Y}^{2}\alpha ! \\
&=&\sum_{\left\vert p\right\vert =\left\vert p^{\prime }\right\vert
=1}\sum_{|\alpha |\geq 1}\int_{U^{2}}(u_{\alpha -p}(x),u_{\alpha -p^{\prime
}}(x^{\prime }))_{Y}E_{p}(x)E_{p^{\prime }}(x^{\prime })\mu (dx)\mu \left(
dx^{\prime }\right) \alpha ! \\
&=&\sum_{p=p^{\prime },\left\vert p\right\vert =1}...+\sum_{p\neq p^{\prime
},\left\vert p\right\vert =\left\vert p^{\prime }\right\vert =1}...=A+B.
\end{eqnarray*}%
Now%
\begin{eqnarray*}
A &=&\sum_{\left\vert p\right\vert =1}\sum_{\alpha \geq p}|\int_{U}u_{\alpha
-p}(x)E_{p}(x)d\mu |_{Y}^{2}\alpha !=\sum_{\gamma }\sum_{\left\vert
p\right\vert =1}|\int_{U}u_{\gamma }(x)E_{p}(x)d\mu |_{Y}^{2}(\gamma +p)! \\
&=&\sum_{\gamma }\sum_{\left\vert p\right\vert =1}|\int_{U}u_{\gamma
}(x)E_{p}(x)d\mu |_{Y}^{2}\gamma !+\sum_{\gamma \geq p}\sum_{\left\vert
p\right\vert =1}|\int_{U}u_{\gamma }(x)E_{p}(x)d\mu |_{Y}^{2}[(\gamma
+p)!-\gamma !] \\
&=&\sum_{\gamma }\int_{U}\gamma !|u_{\gamma }(x)|_{Y}^{2}d\mu +\sum_{\gamma
\geq p}\sum_{\left\vert p\right\vert =1}|\int_{U}u_{\gamma }(x)E_{p}(x)d\mu
|_{Y}^{2}[(\gamma +p)!-\gamma !].
\end{eqnarray*}%
Also,%
\begin{eqnarray*}
&&B \\
&=&\sum_{p\neq p^{\prime },\left\vert p\right\vert =\left\vert p^{\prime
}\right\vert =1}\sum_{\alpha \geq p+p^{\prime }}\int_{U^{2}}(u_{\alpha
-p}(x),u_{\alpha -p^{\prime }}(x^{\prime }))_{Y}E_{p}(x)E_{p^{\prime
}}(x^{\prime })\mu (dx)\mu \left( dx^{\prime }\right) \alpha ! \\
&=&\sum_{p\neq p^{\prime },\left\vert p\right\vert =\left\vert p^{\prime
}\right\vert =1}\sum_{\beta \geq 0}\int_{U^{2}}(u_{\beta +p^{\prime
}}(x),u_{\beta +p}(x^{\prime }))_{Y}\times \\
&&\times E_{p}(x)E_{p^{\prime }}(x^{\prime })\mu (dx)\mu \left( dx^{\prime
}\right) (\beta +p+p^{\prime })!.
\end{eqnarray*}%
On the other hand,%
\begin{eqnarray*}
&&\mathbf{E}\left[ \mathbb{D}u(x;x^{\prime }),\mathbb{D}_{x}u(x^{\prime
};x))_{Y}\right] \\
&=&\sum_{\alpha }\sum_{\left\vert p\right\vert =|p^{\prime }|=1}\frac{%
(\alpha +p)!}{\alpha !}(u_{\alpha +p}(x),a_{\alpha +p^{\prime }}(x^{\prime
}))_{Y}\frac{(\alpha +p^{\prime })!}{\alpha !}E_{p}\left( x^{\prime }\right)
E_{p^{\prime }}\left( x\right) \alpha ! \\
&=&\sum_{\alpha }\sum_{p=p^{\prime },\left\vert p\right\vert
=1}...+\sum_{\alpha }\sum_{p\neq p^{\prime },\left\vert p\right\vert
=\left\vert p^{\prime }\right\vert =1}...=C+D.
\end{eqnarray*}%
Obviously,%
\begin{equation*}
C=\sum_{|p|=1}\sum_{\alpha \geq p}\frac{\alpha !}{(\alpha -p)!}(u_{\alpha
}(x),u_{\alpha }(x^{\prime }))_{Y}E_{p}\left( x\right) E_{p}(x^{\prime
})\alpha !.
\end{equation*}

Comparing%
\begin{equation*}
D=\sum_{\alpha }\sum_{p\neq p^{\prime },\left\vert p\right\vert =|p^{\prime
}|=1}\frac{(\alpha +p)!}{\alpha !}(a_{\alpha +p}(x)E_{p}\left( x^{\prime
}\right) ,a_{\alpha +p^{\prime }}(x^{\prime })E_{p^{\prime }}\left( x\right)
)_{Y}\frac{(\alpha +p^{\prime })!}{\alpha !}\alpha !,
\end{equation*}%
$\int_{U^{2}}Cd\mu _{2},\int_{U^{2}}Dd\mu _{2}$ and $A,B$, the statement
follows.
\end{proof}

\begin{corollary}
\label{c1}Let $u\in \mathcal{S}\left( \mathbf{H}\left( Y\right) ,Y\right) $,
i.e. (\ref{03}) holds with 
\begin{equation*}
\int_{U^{n+1}}\left\vert u_{n}\right\vert _{Y}^{2}d\mu _{n+1}<\infty \text{ }%
\forall n.
\end{equation*}

Then the statement of Proposition \ref{p2} holds for $\delta \left( u\right)
.$
\end{corollary}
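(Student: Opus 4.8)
The plan is to obtain the Corollary from Proposition \ref{p2} by a truncation and limiting argument, using that Proposition \ref{p2} already establishes the isometry for every field whose chaos expansion has only finitely many nonzero coefficients. Writing $u(\upsilon )=\sum_{\alpha }u_{\alpha }(\upsilon )\mathfrak{N}_{\alpha }$ with $u_{\alpha }\in \mathbf{H}(Y)$, membership in $\mathcal{S}\left( \mathbf{H}(Y),Y\right)$ means (cf. Remark \ref{re2}) that only finitely many levels $n=|\alpha |$ occur and that, for each such $n$, $\sum_{|\alpha |=n}\alpha !\int_{U}|u_{\alpha }|_{Y}^{2}d\mu =\frac{1}{n!}\int_{U^{n+1}}|u_{n}|_{Y}^{2}d\mu _{n+1}<\infty$. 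I would fix an increasing sequence of finite index sets $J_{N}\uparrow J$ and set $u^{N}=\sum_{\alpha \in J_{N}}u_{\alpha }\mathfrak{N}_{\alpha }$; each $u^{N}$ has finitely many nonzero coefficients with $u_{\alpha }\in \mathbf{H}(Y)$, so Proposition \ref{p2} applies to $u^{N}$ and to every difference $u^{M}-u^{N}$. Everything then reduces to passing to the limit $N\rightarrow \infty $ in the three terms.

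First I would dispose of the two right-hand terms. The term $\mathbf{E}\int_{U}|u^{N}|_{Y}^{2}d\mu =\sum_{\alpha \in J_{N}}\alpha !\int_{U}|u_{\alpha }|_{Y}^{2}d\mu $ increases to $\mathbf{E}\int_{U}|u|_{Y}^{2}d\mu $, which is finite since only finitely many levels are present. For the Malliavin term I would use the multiple-integral form $\mathbb{D}u(\upsilon ;\upsilon ^{\prime })=\sum_{n}\frac{1}{(n-1)!}I_{n-1}(u_{n}(\upsilon ;\upsilon ^{\prime },\cdot ))$ (as in Proposition \ref{prop2}) together with the orthogonality and norm formula for the $I_{n-1}$. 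This shows that the quadratic form $Q(v)=\mathbf{E}\int_{U^{2}}(\mathbb{D}v(\upsilon ;\upsilon ^{\prime }),\mathbb{D}v(\upsilon ^{\prime };\upsilon ))_{Y}\mu (d\upsilon )\mu (d\upsilon ^{\prime })$ splits over levels and, by Cauchy--Schwarz together with the symmetry of each kernel $u_{n}$, satisfies $|Q(v)|\leq \sum_{n}\frac{1}{(n-1)!}\int_{U^{n+1}}|v_{n}|_{Y}^{2}d\mu _{n+1}$. Writing $Q(u)-Q(u^{N})=Q(u-u^{N},u)+Q(u^{N},u-u^{N})$ for the associated bilinear form and applying this estimate, the convergence $Q(u^{N})\rightarrow Q(u)$ follows from $\int_{U^{n+1}}|u_{n}-u_{n}^{N}|_{Y}^{2}d\mu _{n+1}\rightarrow 0$ on each of the finitely many levels.

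Finally I would identify the limit of the left-hand side with $\mathbf{E}|\delta (u)|_{Y}^{2}$. Since $u^{M}-u^{N}$ still has finitely many nonzero coefficients, Proposition \ref{p2} gives $\mathbf{E}|\delta (u^{M})-\delta (u^{N})|_{Y}^{2}=\mathbf{E}\int_{U}|u^{M}-u^{N}|_{Y}^{2}d\mu +Q(u^{M}-u^{N})$, and both summands tend to $0$ by the two bounds above, so $\{\delta (u^{N})\}$ is Cauchy in $L_{2}(\Omega ;Y)$. Its limit coincides with the formal series $\delta (u)$ because $\delta (u^{N})\rightarrow \delta (u)$ coefficientwise in $\mathcal{S}^{\prime }$; hence $\delta (u)\in L_{2}(\Omega ;Y)$ and $\mathbf{E}|\delta (u)|_{Y}^{2}=\lim_{N}\mathbf{E}|\delta (u^{N})|_{Y}^{2}$, which by the already-established convergence of the right-hand side equals the asserted expression.

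The main obstacle is the cross-term $Q$: unlike $\mathbf{E}\int_{U}|u|_{Y}^{2}d\mu $ it is not sign-definite, so monotone convergence is unavailable and genuine $L_{2}$-continuity must be proved. The Cauchy--Schwarz bound controlling $|Q(v)|$ by the $\mathbf{H}^{\hat{\otimes}(n+1)}(Y)$-norms $\int_{U^{n+1}}|v_{n}|_{Y}^{2}d\mu _{n+1}$ is exactly where the hypothesis of the Corollary is consumed, and the restriction to finitely many levels is what keeps the weighted sums finite.
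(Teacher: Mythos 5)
Your proposal is correct and follows essentially the same route as the paper: the paper likewise reduces to fields with finitely many nonzero coefficients (level by level, via $u=I_{n}(u_{n})$), applies Proposition \ref{p2} there, and then invokes ``linearity and passing to the limit.'' Your write-up simply supplies the details that the paper compresses into that last phrase --- in particular the Cauchy--Schwarz control of the non-sign-definite cross term $Q$ and the $L_{2}$-Cauchy argument identifying $\lim_{N}\delta(u^{N})$ with $\delta(u)$ --- so it is a legitimate, more explicit rendering of the same argument.
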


\begin{proof}
It is enough to prove the statement for $u\left( \upsilon \right)
=I_{n}\left( u_{n}\left( \upsilon \right) \right) $, where 
\begin{equation*}
u_{n}\left( \upsilon ,\cdot \right) =\sum_{\left\vert \alpha \right\vert
=n}u_{\alpha }\left( \upsilon \right) E_{\alpha }\left( \cdot \right)
\end{equation*}%
with with a finite number nonzero $u_{\alpha }\in \mathbf{H}\left( Y\right)
: $ 
\begin{equation*}
\int_{U}\left\vert u_{\alpha }\right\vert _{Y}^{2}d\mu <\infty .
\end{equation*}%
In this case, $u=n!\sum_{\left\vert \alpha \right\vert =n}u_{\alpha }\left(
\upsilon \right) \mathfrak{N}_{\alpha }\in L_{2}(\mathbb{D}\left( U;Y\right)
,d\mu )$ and Proposition \ref{p2} applies. We obtain the general case by
linearity and passing to the limit.
\end{proof}

\begin{remark}
In the framework of a single r.v. $\xi $ (see Remark \ref{r2}), for $%
u=\sum_{n}u_{n}\xi ^{\diamond n}$ we have%
\begin{equation*}
\mathbf{E}\left[ \delta (u)^{2}\right] =\mathbf{E}\left[ \left\vert
u\right\vert ^{2}\right] +\mathbf{E}\left[ (\mathbb{D}u)^{2}\right] .
\end{equation*}
\end{remark}

For an adapted random field on $U=[0,T]\times V,d\mu =dtd\pi $, the standard
isometry holds. It is an obvious consequence of Corollary \ref{c1}

\begin{corollary}
\label{c2}Let $\mathbf{H}=L^{2}\left( [0,T]\times V,dtd\pi \right) $. Assume 
$u\in \mathcal{S}\left( \mathbf{H}\left( Y\right) ,Y\right) $ is an adapted
random field on $U=\left[ 0,T\right] \times V$. Then%
\begin{equation*}
\mathbf{E}\left[ |\delta (u)|_{Y}^{2}\right] =\mathbf{E}\left[
\int_{U}\left\vert u(t,\upsilon )\right\vert _{Y}^{2}d\mu \right] .
\end{equation*}
\end{corollary}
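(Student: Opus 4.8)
The plan is to deduce the statement directly from the general Ito-Skorokhod isometry of Proposition \ref{p2}, which holds for $u\in\mathcal{S}(\mathbf{H}(Y),Y)$ by Corollary \ref{c1}. That isometry reads
\[
\mathbf{E}\left[|\delta(u)|_{Y}^{2}\right]=\mathbf{E}\left[\int_{U}|u(\upsilon)|_{Y}^{2}d\mu\right]+\mathbf{E}\left[\int_{U^{2}}(\mathbb{D}u(\upsilon;\upsilon^{\prime}),\mathbb{D}u(\upsilon^{\prime};\upsilon))_{Y}\mu(d\upsilon)\mu(d\upsilon^{\prime})\right],
\]
so it suffices to show that, under the adaptedness hypothesis, the second (trace) term vanishes. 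In other words, the entire content of the corollary is that adaptedness kills the off-diagonal correction term.

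To see this, I would write the space-time points as $\upsilon=(t,y)$ and $\upsilon^{\prime}=(s,y^{\prime})$ in $U=[0,T]\times V$ and appeal to Corollary \ref{co1}. Since $u$ is adapted, $\mathbb{D}u(t,y;s,y^{\prime})=0$ whenever $s>t$, and, by the same token, $\mathbb{D}u(s,y^{\prime};t,y)=0$ whenever $t>s$. Consequently the integrand $(\mathbb{D}u(\upsilon;\upsilon^{\prime}),\mathbb{D}u(\upsilon^{\prime};\upsilon))_{Y}$ is a product of two factors at least one of which vanishes unless $s=t$; that is, the integrand is supported on the time-diagonal $\{(t,y,s,y^{\prime}):t=s\}$.

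The last step, and the only point deserving a word of justification, is that this diagonal is $\mu_{2}$-null. Because $d\mu=dt\,d\pi$, the image of the diagonal under projection onto the two time coordinates is $\{(t,s)\in[0,T]^{2}:t=s\}$, which has planar Lebesgue measure zero; by Fubini the diagonal therefore has $\mu_{2}$-measure zero in $U\times U$. Hence the trace integral vanishes and the isometry collapses to $\mathbf{E}[|\delta(u)|_{Y}^{2}]=\mathbf{E}[\int_{U}|u(t,\upsilon)|_{Y}^{2}d\mu]$. I do not anticipate any genuine obstacle here; the argument is essentially the classical observation that in the adapted case the Malliavin derivative and the integration variable are causally separated, so the symmetric correction term concentrates on a Lebesgue-null diagonal.
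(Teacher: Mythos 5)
Your proposal is correct and follows exactly the route the paper intends: the paper offers no separate argument, stating only that the corollary is ``an obvious consequence of Corollary \ref{c1},'' and your proof is precisely the filling-in of that deduction. Your key observation --- that by Corollary \ref{co1} adaptedness forces the integrand $(\mathbb{D}u(\upsilon;\upsilon'),\mathbb{D}u(\upsilon';\upsilon))_{Y}$ of the correction term to be supported on the time-diagonal $\{t=s\}$, which is $\mu_{2}$-null since $d\mu=dt\,d\pi$ --- is the substance behind the paper's ``obvious.''
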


\textbf{Duality between }$\delta $ and $\mathbb{D}$

\begin{proposition}
\label{p3}Let $u=u\left( x\right) =\sum_{\alpha }u_{\alpha }\left( x\right) 
\mathfrak{N}_{\alpha }\in L_{2}(\mathcal{D}\left( U;\mathbf{R}\right) ,d\mu
),$ i.e. 
\begin{equation*}
\int |u_{a}\left( x\right) |^{2}d\mu <\infty ,\alpha \in J,
\end{equation*}%
with a finite number of $u_{\alpha }\neq 0$. Let $v=\sum_{\alpha }v_{\alpha }%
\mathfrak{N}_{\alpha }\in \mathcal{D}$. Then%
\begin{equation*}
\mathbf{E[}\delta \left( u\right) v]=\mathbf{E}\left[ \int_{U}u(x)\mathbb{D}%
v(x)d\mu \right] .
\end{equation*}
\end{proposition}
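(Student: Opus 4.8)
The plan is to expand both sides in the complete orthogonal system $\{\mathfrak{N}_{\alpha}\}$ and match them term-by-term, using the single orthogonality relation $\mathbf{E}[\mathfrak{N}_{\alpha}\mathfrak{N}_{\beta}]=\alpha!\,\mathbf{1}_{\{\alpha=\beta\}}$ supplied by Proposition \ref{p1}. Since $u$ has only finitely many nonzero coefficients $u_{\alpha}$ and $v\in\mathcal{D}$ is a finite linear combination, every series that appears is in fact a finite sum, so no convergence or integrability issue arises and the argument is purely algebraic. First I would record, straight from the definitions in Sections 3.1--3.2, the raised-index form of the Skorokhod integral
\[
\delta(u)=\sum_{\alpha}\sum_{|p|=1}\Bigl(\int_{U}u_{\alpha}(r)E_{p}(r)\,d\mu\Bigr)\mathfrak{N}_{\alpha+p},
\]
together with
\[
\mathbb{D}v(x)=\sum_{\gamma}\sum_{|p|=1}\frac{(\gamma+p)!}{\gamma!}\,v_{\gamma+p}\,E_{p}(x)\,\mathfrak{N}_{\gamma}.
\]

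For the left-hand side I would multiply $\delta(u)$ by $v=\sum_{\beta}v_{\beta}\mathfrak{N}_{\beta}$, take expectations, and use orthogonality to collapse the double sum over $(\alpha+p,\beta)$ onto the diagonal $\beta=\alpha+p$, which contributes the weight $(\alpha+p)!$. This gives
\[
\mathbf{E}[\delta(u)v]=\sum_{\alpha}\sum_{|p|=1}(\alpha+p)!\,v_{\alpha+p}\int_{U}u_{\alpha}(r)E_{p}(r)\,d\mu.
\]
For the right-hand side I would insert $u(x)=\sum_{\beta}u_{\beta}(x)\mathfrak{N}_{\beta}$ and the formula for $\mathbb{D}v(x)$, exchange expectation and the (finite) $d\mu$-integral, and again invoke orthogonality, now forcing $\beta=\gamma$ and producing the weight $\gamma!$; this $\gamma!$ cancels the denominator in $(\gamma+p)!/\gamma!$, leaving
\[
\mathbf{E}\Bigl[\int_{U}u\,\mathbb{D}v\,d\mu\Bigr]=\sum_{\gamma}\sum_{|p|=1}(\gamma+p)!\,v_{\gamma+p}\int_{U}u_{\gamma}(x)E_{p}(x)\,d\mu,
\]
which coincides with the left-hand expression after relabeling $\gamma\to\alpha$.

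The only point that requires care is the bookkeeping of the combinatorial factors, and this is exactly where the adjointness is encoded: the weight $(\alpha+p)!=\mathbf{E}[\mathfrak{N}_{\alpha+p}^{2}]$ that appears on the left must be reproduced on the right as the product of $\gamma!=\mathbf{E}[\mathfrak{N}_{\gamma}^{2}]$ with the normalizing coefficient $(\gamma+p)!/\gamma!$ built into the definition of $\mathbb{D}$. Thus the identity amounts to verifying that the factorial normalizations chosen for $\delta$ and $\mathbb{D}$ are mutually compatible; there is no genuine analytic obstacle, and the finiteness hypotheses on $u$ and $v$ remove any question of rearranging the sums.
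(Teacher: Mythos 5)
Your proof is correct and follows essentially the same route as the paper's: both rest on the orthogonality relation $\mathbf{E}[\mathfrak{N}_{\alpha}\mathfrak{N}_{\beta}]=\alpha!\,\mathbf{1}_{\{\alpha=\beta\}}$, the explicit coefficient formulas for $\delta$ and $\mathbb{D}$, and the index shift $\alpha=\gamma+p$ with the factorial identity $\alpha!=\frac{(\gamma+p)!}{\gamma!}\gamma!$. The only cosmetic difference is that you evaluate both sides separately and match them, while the paper rewrites $\mathbf{E}[\delta(u)v]$ directly into $\mathbf{E}\left[\int_{U}u\,\mathbb{D}v\,d\mu\right]$ by that same reindexing.
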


\begin{proof}
Indeed, 
\begin{eqnarray*}
&&\mathbf{E[}\delta \left( u\right) v] \\
&=&\sum_{|\alpha |\geq 1}\sum_{\left\vert p\right\vert =1}\int_{U}u_{\alpha
-p}(x)E_{p}(x)d\mu v_{\alpha }\alpha ! \\
&=&\sum_{\left\vert p\right\vert =1}\sum_{\alpha \geq p}\int_{U}u_{\alpha
-p}(x)E_{p}(x)d\mu v_{\alpha }\alpha ! \\
&=&\sum_{\left\vert p\right\vert =1}\sum_{\gamma }\int_{U}u_{\gamma
}(x)E_{p}(x)d\mu v_{\gamma +p}\frac{\left( \gamma +p\right) !}{\gamma !}%
\gamma !=\mathbf{E}\left[ \int_{U}u(x)\mathbb{D}v(x)d\mu \right] .
\end{eqnarray*}
\end{proof}

\section{\protect\bigskip Stochastic Differential Equations}

\subsubsection{Wick exponent}

We start with the definition of Wick exponent.

Let $f=\sum_{k}f_{k}m_{k}\in L^{2}\left( U,d\mu \right) $. For $n\geq 1$ and 
$\mathfrak{N}(f)=\sum_{k}f_{k}\xi _{k}$ we have (denoting $f^{\alpha
}=\prod_{k}f_{k}^{\alpha _{k}}$) 
\begin{equation*}
\mathfrak{N}\left( f\right) ^{\diamond n}=\mathfrak{N}(f)\diamond \ldots
\diamond \mathfrak{N}(f)\text{ (n times)}=\sum_{\left\vert \alpha
\right\vert =n}\frac{n!}{\alpha !}f^{\alpha }\mathfrak{N}_{\alpha }.
\end{equation*}%
Note that $\mathfrak{N}\left( f\right) ^{\diamond n}\in L^{2}\left( \Omega
\right) :$%
\begin{eqnarray*}
\mathbf{E}\left[ \left( \frac{1}{n!}\mathfrak{N}\left( f\right) ^{\diamond
n}\right) ^{2}\right] &=&\sum_{\left\vert \alpha \right\vert =n}\frac{%
z^{2\alpha }}{\alpha !}=\frac{1}{n!}\sum_{\left\vert \alpha \right\vert =n}%
\frac{n!z^{2\alpha }}{\alpha !}=\frac{1}{n!}\left( \sum_{i}z_{i}^{2}\right)
^{n} \\
&=&\frac{1}{n!}\left\vert f\right\vert _{L^{2}\left( \mu \right)
}^{2n}<\infty .
\end{eqnarray*}

Let $\mathcal{Z}$ be the set of all number sequences $z=\left( z_{k}\right) $
with finite number of nonzero terms. The following statement holds.

\begin{proposition}
\label{prop6}a) Let $f=\sum_{k}f_{k}m_{k}\in L^{2}\left( \mu \right) $. Then%
\begin{equation*}
\mathfrak{N}\left( f\right) ^{\diamond n}=I_{n}\left( f^{\otimes n}\right)
\end{equation*}%
and 
\begin{equation*}
\exp ^{\diamond }\left\{ \mathfrak{N(}f)\right\} :=\sum_{n=0}^{\infty }\frac{%
\mathfrak{N}\left( f\right) ^{\diamond n}}{n!}=\sum_{n=0}^{\infty
}\sum_{\left\vert \alpha \right\vert =n}\frac{f^{\alpha }}{\alpha !}%
\mathfrak{N}_{\alpha }=\sum_{\alpha }\frac{f^{\alpha }}{\alpha !}\mathfrak{N}%
_{\alpha }\in L^{2}\left( \Omega \right)
\end{equation*}%
with%
\begin{equation*}
\frac{f^{\alpha }}{\alpha !}=\frac{1}{n!\alpha !}\int f^{\otimes n}E_{\alpha
}d\mu _{n}.
\end{equation*}%
Moreover,%
\begin{equation}
\mathbf{E}\left[ \left( \exp ^{\diamond }\left\{ \mathfrak{N}f\right\}
\right) ^{2}\right] =\exp \left\{ \left\vert f\right\vert _{L^{2}\left( \mu
\right) }^{2}\right\} .  \label{f1}
\end{equation}

b) Let $z=(z_{k})\in \mathcal{Z}$. Then $\mathbf{P}$-a.s.%
\begin{equation*}
p\left( z\right) =\exp ^{\diamond }\left\{ \mathfrak{N}\left(
\sum_{k}z_{k}m_{k}\right) \right\} =\sum_{\alpha }\frac{z^{\alpha }}{\alpha !%
}\mathfrak{N}_{\alpha },z=\left( z_{k}\right) \in \mathcal{Z},
\end{equation*}%
is analytic in $z$ and 
\begin{equation*}
\frac{\partial ^{\left\vert \alpha \right\vert }p\left( z\right) }{\partial
z^{\alpha }}|_{z=0}=\mathfrak{N}_{\alpha }.
\end{equation*}
\end{proposition}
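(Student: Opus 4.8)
The plan is to prove a) and b) in order, with b) resting on the chaos identity from a). For part a) I would first determine the expansion of $f^{\otimes n}$ in the basis $\{E_{\alpha}\}$. Writing $f=\sum_{k}f_{k}m_{k}$ and expanding the tensor power, the elementary tensors $m_{k_{1}}\otimes\cdots\otimes m_{k_{n}}$ are grouped by the multiindex $\alpha$ they induce: each group consists of the $n!/\alpha!$ distinct arrangements of the characteristic multiset $K_{\alpha}$, all carrying the common coefficient $f^{\alpha}$, and by the definition of $E_{\alpha}$ they reassemble into $\tfrac{1}{\alpha!}E_{\alpha}$. This gives $f^{\otimes n}=\sum_{|\alpha|=n}\tfrac{f^{\alpha}}{\alpha!}E_{\alpha}$. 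Applying $I_{n}$ and using $I_{n}(E_{\alpha})=n!\mathfrak{N}_{\alpha}$ together with the formula $\mathfrak{N}(f)^{\diamond n}=\sum_{|\alpha|=n}\tfrac{n!}{\alpha!}f^{\alpha}\mathfrak{N}_{\alpha}$ stated just above the proposition yields $\mathfrak{N}(f)^{\diamond n}=I_{n}(f^{\otimes n})$. The coefficient identity $\tfrac{f^{\alpha}}{\alpha!}=\tfrac{1}{n!\alpha!}\int f^{\otimes n}E_{\alpha}\,d\mu_{n}$ is then merely the extraction of the $E_{\alpha}$-coefficient via the relation $\int E_{\alpha}E_{\beta}\,d\mu_{n}=\delta_{\alpha\beta}\alpha!n!$ recorded in Remark \ref{re2}.

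Summing over $n$ is next. Dividing by $n!$ and regrouping the double sum by total degree collapses $\sum_{n}\tfrac{1}{n!}\mathfrak{N}(f)^{\diamond n}$ into $\sum_{\alpha}\tfrac{f^{\alpha}}{\alpha!}\mathfrak{N}_{\alpha}$. Convergence in $L^{2}(\Omega)$ and formula (\ref{f1}) come out together: since $\{\mathfrak{N}_{\alpha}\}$ is orthogonal with $\mathbf{E}\mathfrak{N}_{\alpha}^{2}=\alpha!$, the partial sums $\sum_{|\alpha|\le N}\tfrac{f^{\alpha}}{\alpha!}\mathfrak{N}_{\alpha}$ have squared norm $\sum_{|\alpha|\le N}\tfrac{f^{2\alpha}}{\alpha!}$, which increases to $\prod_{k}\sum_{j\ge0}\tfrac{f_{k}^{2j}}{j!}=\prod_{k}e^{f_{k}^{2}}=\exp\{|f|_{L^{2}(\mu)}^{2}\}$ by Parseval. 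Hence the series is Cauchy, and its limit has exactly this squared norm.

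For part b), specializing $f=\sum_{k}z_{k}m_{k}$ gives $f_{k}=z_{k}$, so $f^{\alpha}=z^{\alpha}$ and part a) immediately produces $p(z)=\sum_{\alpha}\tfrac{z^{\alpha}}{\alpha!}\mathfrak{N}_{\alpha}$. Once analyticity is in hand, the derivative formula follows by term-by-term differentiation, since $\tfrac{\partial^{\alpha}}{\partial z^{\alpha}}\tfrac{z^{\beta}}{\beta!}$ vanishes at $z=0$ unless $\beta=\alpha$, where it equals $1$. I expect the genuinely delicate point to be the \emph{almost sure} (as opposed to $L^{2}$) analyticity, and this is the main obstacle. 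It is resolved by a crude moment bound: by Cauchy--Schwarz $\mathbf{E}|\mathfrak{N}_{\alpha}|\le(\mathbf{E}\mathfrak{N}_{\alpha}^{2})^{1/2}=\sqrt{\alpha!}$, so for any vector $r$ of positive radii supported, like $z$, on the finite set $F$ of indices where $z\neq0$,
\[
\mathbf{E}\sum_{\mathrm{supp}\,\alpha\subseteq F}\frac{r^{\alpha}}{\alpha!}|\mathfrak{N}_{\alpha}|\le\sum_{\mathrm{supp}\,\alpha\subseteq F}\frac{r^{\alpha}}{\sqrt{\alpha!}}=\prod_{k\in F}\sum_{n\ge0}\frac{r_{k}^{\,n}}{\sqrt{n!}}<\infty,
\]
the product being finite because $\sqrt{n!}$ grows super-geometrically.

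By Tonelli the random series $\sum_{\alpha}\tfrac{r^{\alpha}}{\alpha!}|\mathfrak{N}_{\alpha}(\omega)|$ is then finite almost surely for each such $r$; intersecting the exceptional null sets over $r\in\{1,2,\ldots\}^{F}$ shows that, almost surely, $\sum_{\alpha}\tfrac{z^{\alpha}}{\alpha!}\mathfrak{N}_{\alpha}(\omega)$ converges absolutely on every polydisc, hence defines an entire function of the finitely many variables $(z_{k})_{k\in F}$. Term-by-term differentiation is then legitimate and gives $\tfrac{\partial^{|\alpha|}p}{\partial z^{\alpha}}(0)=\mathfrak{N}_{\alpha}$, completing b). The only conceptual input beyond the earlier results is thus the interplay of the second-moment normalization $\mathbf{E}\mathfrak{N}_{\alpha}^{2}=\alpha!$ with the rapid decay of $1/\sqrt{n!}$; everything else is bookkeeping on the orthogonal expansion.
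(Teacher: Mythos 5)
Your proof is correct and takes essentially the same route as the paper: the same identification $f^{\otimes n}=\sum_{|\alpha|=n}\frac{f^{\alpha}}{\alpha!}E_{\alpha}$ combined with $I_{n}(E_{\alpha})=n!\mathfrak{N}_{\alpha}$ in part a), the same orthogonality computation for (\ref{f1}) (you group the sum by the index $k$ via Parseval where the paper groups by chaos level $n$ via the multinomial theorem --- a cosmetic difference), and in part b) the identical key bound $\mathbf{E}|\mathfrak{N}_{\alpha}|\le\sqrt{\alpha!}$ leading to $\prod_{k}\sum_{n}r_{k}^{n}/\sqrt{n!}<\infty$. Your Tonelli and null-set intersection argument simply fills in the details that the paper compresses into ``and the statement follows.''
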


\begin{proof}
a) In terms of multiple integrals we have 
\begin{eqnarray*}
\mathfrak{N}\left( f\right) ^{\diamond n} &=&\sum_{\left\vert \alpha
\right\vert =n}\frac{n!f^{\alpha }}{\alpha !}\mathfrak{N}_{\alpha
}=\sum_{\left\vert \alpha \right\vert =n}\frac{f^{\alpha }}{\alpha !}%
I_{n}\left( E_{\alpha }\right) = \\
&&\sum_{\left\vert p_{i}\right\vert =1,p_{1}+\ldots +p_{n}=\alpha }\frac{%
n!f^{\alpha }}{\alpha !}I_{n}\left( \widetilde{E_{p_{1}}\ldots E_{p_{n}}}%
\right) \\
&=&I_{n}\left( f^{\otimes n}\right) =I_{n}\left( \sum_{\left\vert \alpha
\right\vert =n}\frac{f^{\alpha }}{\alpha !}E_{\alpha }\right) ,
\end{eqnarray*}%
with 
\begin{equation*}
f^{\otimes n}=\sum_{\left\vert \alpha \right\vert =n}\frac{f^{\alpha }}{%
\alpha !}E_{\alpha },f^{\alpha }=\frac{1}{n!}\int f^{\otimes n}E_{\alpha
}d\mu _{n}.
\end{equation*}

In addition,%
\begin{eqnarray*}
\mathbf{E}\left[ \left( \frac{\mathfrak{N}\left( f\right) ^{\diamond n}}{n!}%
\right) ^{2}\right] &=&\sum_{\left\vert \alpha \right\vert =n}\frac{%
f^{2\alpha }}{\alpha !}=\frac{1}{n!}\sum_{\left\vert \alpha \right\vert =n}%
\frac{n!f^{2\alpha }}{\alpha !}=\frac{1}{n!}\left( \sum_{i}f_{i}^{2}\right)
^{n} \\
&=&\frac{1}{n!}\left\vert f\right\vert _{L^{2}\left( \mu \right) }^{2n}
\end{eqnarray*}%
(note also using multiple integrals: $\mathbf{E}\left( \frac{1}{\left(
n!\right) ^{2}}I_{n}\left( f^{\otimes n}\right) ^{2}\right) =\frac{1}{n!}%
\left\vert f^{\otimes n}\right\vert _{L^{2}(\mu _{n})}^{2}=\frac{1}{n!}%
\left\vert f\right\vert _{L^{2}(\mu )}^{2n})$. Moreover,%
\begin{equation*}
\mathbf{E}\left( \sum_{n}\left( \frac{\mathfrak{N}\left( f\right) ^{\diamond
n}}{n!}\right) ^{2}\right) =\sum_{n}\frac{1}{n!}\left\vert f\right\vert
_{L^{2}\left( \mu \right) }^{2n}=\exp \left\{ \left\vert f\right\vert
_{L^{2}\left( \mu \right) }^{2}\right\} .
\end{equation*}

b) Let $z=\left( z_{k}\right) \in \mathcal{Z}$. Then 
\begin{equation*}
\mathbf{E}\left\vert p(z)\right\vert \leq \sum_{\alpha }\frac{\left\vert
z^{\alpha }\right\vert }{\sqrt{\alpha !}}\mathbf{E}\left\vert \frac{%
\mathfrak{N}_{\alpha }}{\sqrt{\alpha !}}\right\vert \leq \sum_{\alpha }\frac{%
\left\vert z^{\alpha }\right\vert }{\sqrt{\alpha !}}\leq \prod_{k}\sum_{n}%
\frac{|z_{k}|^{n}}{\sqrt{n!}}
\end{equation*}%
and the statement follows.
\end{proof}

In a time dependent case the following statement holds.

\begin{corollary}
\label{lr1}Let $U=\left[ 0,T\right] \times V,~d\mu =dtd\pi $. Let $G\in
L^{2}\left( [0,T]\times V,d\mu \right) $. Consider $M_{t}=\exp ^{\diamond
}\left\{ \mathfrak{N}\left( \chi _{\lbrack s,t]}G\right) \right\} ,0\leq
s\leq t\leq T.$

Then%
\begin{equation*}
M_{t}=\sum_{\alpha }\frac{H(s,t)^{\alpha }}{\alpha !}\mathfrak{N}_{\alpha
}=\sum_{n=0}^{\infty }\frac{I_{n}(H_{n}(s,t))}{n!},s\leq t\leq T,
\end{equation*}%
with%
\begin{eqnarray*}
H_{k}(s,t) &=&\int_{s}^{t}\int G\left( r,\upsilon \right) m_{k}(r,\upsilon
)drd\pi ,H(s,t)^{\alpha }=\prod_{k}H_{k}(s,t)^{\alpha _{k}}, \\
&&\frac{H(s,t)^{\alpha }}{\alpha !}=\frac{1}{n!\alpha !}\int \left( \chi
_{\lbrack s,t]}G\right) ^{\otimes n}E_{\alpha }d\mu _{n}\text{ if }%
\left\vert \alpha \right\vert =n, \\
H_{n}\left( s,t\right) &=&\left( \chi _{\lbrack s,,t]}G\right) ^{\otimes n}.
\end{eqnarray*}%
Moreover, $M$ is adapted (for each $t$, the supp$\left( H_{n}(t\right)
)\subseteq Q_{t}^{n}=(\left[ 0,t\right] \times V)^{n}).$
\end{corollary}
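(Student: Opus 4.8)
The plan is to recognize that this corollary is essentially Proposition \ref{prop6} specialized to the time-truncated kernel $f=\chi_{[s,t]}G$, supplemented by a short support-bookkeeping argument for the adaptedness claim. First I would check that $f=\chi_{[s,t]}G$ lies in $\mathbf{H}=L^2(U,\mu)$: since $G\in L^2([0,T]\times V,d\mu)$ and $\chi_{[s,t]}$ is a bounded measurable function, the product is again square integrable, so Proposition \ref{prop6} applies verbatim to $f$, and in particular $M_t=\exp^{\diamond}\{\mathfrak{N}(\chi_{[s,t]}G)\}$ is a well-defined element of $L^2(\Omega)$.

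Next I would compute the coordinates of $f$ in the CONS $\{m_k\}$. By definition $f_k=\int_U f m_k\,d\mu=\int_s^t\int_V G(r,\upsilon)m_k(r,\upsilon)\,dr\,d\pi=H_k(s,t)$, whence $f^{\alpha}=\prod_k f_k^{\alpha_k}=H(s,t)^{\alpha}$ and $f^{\otimes n}=(\chi_{[s,t]}G)^{\otimes n}=H_n(s,t)$. Substituting these three identities into the two equivalent forms of the Wick exponent furnished by Proposition \ref{prop6} yields both claimed expansions simultaneously: the chaos form $M_t=\sum_{\alpha}(H(s,t)^{\alpha}/\alpha!)\mathfrak{N}_{\alpha}$ comes from the $\mathfrak{N}_{\alpha}$-series, while the multiple-integral form $M_t=\sum_{n}I_n(H_n(s,t))/n!$ comes from the identity $\mathfrak{N}(f)^{\diamond n}=I_n(f^{\otimes n})$. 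The integral representation $H(s,t)^{\alpha}/\alpha!=(n!\alpha!)^{-1}\int(\chi_{[s,t]}G)^{\otimes n}E_{\alpha}\,d\mu_n$ for $|\alpha|=n$ is likewise just the corresponding formula $f^{\alpha}/\alpha!=(n!\alpha!)^{-1}\int f^{\otimes n}E_{\alpha}\,d\mu_n$ of Proposition \ref{prop6} rewritten in the new notation.

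It then remains to verify adaptedness. Here I would simply observe that $\chi_{[s,t]}G$ vanishes in its time argument outside $[s,t]$, so the tensor power $H_n(s,t)=(\chi_{[s,t]}G)^{\otimes n}$ is supported in $([s,t]\times V)^n\subseteq([0,t]\times V)^n=Q_t^n$. By the definition of $\mathcal{F}_t$-measurability for elements of $\mathcal{S}^{\prime}(Y)$, this support inclusion makes each $M_t$ an $\mathcal{F}_t$-measurable random variable, hence $M$ an adapted process.

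I do not expect any genuine obstacle: the whole statement is a transcription of Proposition \ref{prop6} with $f=\chi_{[s,t]}G$, and the only point requiring (entirely routine) care is the support inclusion $[s,t]\subseteq[0,t]$ invoked in the adaptedness step.
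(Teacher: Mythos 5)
Your proposal is correct and follows exactly the route the paper intends: the paper states Corollary \ref{lr1} without proof as an immediate consequence of Proposition \ref{prop6}, and your writeup is precisely that specialization, substituting $f=\chi_{[s,t]}G$ so that $f_k=H_k(s,t)$, $f^{\alpha}=H(s,t)^{\alpha}$, $f^{\otimes n}=H_n(s,t)$, together with the support inclusion $([s,t]\times V)^n\subseteq Q_t^n$ that the paper itself records parenthetically for the adaptedness claim. Nothing is missing.
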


\subsubsection{Linear SDE}

Let $U=\left[ 0,T\right] \times V,d\mu =dtd\pi $. Let $w=\sum_{\alpha
}w_{\alpha }\mathfrak{N}_{\alpha }\in \mathcal{D}^{\prime },~f=\sum_{\alpha
}f_{\alpha }\left( t,\upsilon \right) \mathfrak{N}_{\alpha }\in L_{2}\left( 
\mathcal{D}^{\prime }\left( U\right) ,d\mu \right) $. For $G\in L^{2}\left(
\mu \right) ,$, consider the a non-homogeneous equation%
\begin{equation}
\dot{u}\left( t\right) =\int [u\left( t\right) G\left( t,\upsilon \right)
+f\left( t,\upsilon \right) ]\diamond \mathfrak{\dot{N}}(t,\upsilon )\pi
\left( d\upsilon \right) ,u\left( 0\right) =w,  \label{fe11}
\end{equation}%
that is, equivalently,%
\begin{equation}
u(t)=w+\int_{0}^{t}[u(s)G(s,\upsilon )+f(s,\upsilon )]\diamond \mathfrak{N}%
\left( ds,d\upsilon \right) ,0\leq t\leq T.  \label{12}
\end{equation}%
We seek a solution to (\ref{12}) in the form%
\begin{equation}
u\left( t\right) =\sum_{\alpha }u_{\alpha }(t)\mathfrak{N}_{\alpha },0\leq
t\leq T.  \label{13}
\end{equation}

\begin{lemma}
\label{lel1}Let $w=\sum_{\alpha }w_{\alpha }\mathfrak{N}_{\alpha }\in 
\mathcal{D}^{\prime },~f=\sum_{\alpha }f_{\alpha }\left( t,\upsilon \right) 
\mathfrak{N}_{\alpha }\in L_{2}\left( \mathcal{D}^{\prime }\left( U\right)
,d\mu \right) $. Then there is a unique solution to (\ref{12}) in $C\mathcal{%
D}^{\prime }\left( [0,T];\mathbf{R}\right) $ (Recall $C\mathcal{D}^{\prime
}\left( [0,T];\mathbf{R}\right) $ is the class of all generalized processes $%
u=\sum_{\alpha }u_{\alpha }(t)\mathfrak{N}_{\alpha }$ on $\left[ 0,T\right] $
such that $u_{\alpha }$ is continuous on $\left[ 0,T\right] $ $\forall
\alpha \in J.$). The solution $u$ given by (\ref{13}) has the following
coefficients: $u_{0}\left( t\right) =w_{0},$%
\begin{equation}
u_{\alpha }(t)=w_{\alpha }+\sum_{\left\vert p\right\vert =1}\int_{0}^{t}\int
[u_{\alpha -p}\left( r\right) G(r,\upsilon )+f_{\alpha -p}\left( r,\upsilon
\right) ]E_{p}(r,\upsilon )d\pi dr,0\leq t\leq T.  \label{fe3}
\end{equation}
\end{lemma}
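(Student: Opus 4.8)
The plan is to reduce the Wick/Skorokhod integral equation (\ref{12}) to its propagator, that is, to the deterministic scalar system (\ref{fe3}) for the coefficients $u_{\alpha}$, and then to solve that system by induction on $|\alpha|$, exploiting the fact that it is lower-triangular. Existence, continuity, and uniqueness should all fall out of this triangular structure once the passage from (\ref{12}) to (\ref{fe3}) is justified.

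First I would substitute the ansatz (\ref{13}) into (\ref{12}). Since $G$ is deterministic, the Wick integrand simplifies to an ordinary $\mathfrak{N}_{\alpha}$-expansion: $u(s)G(s,\upsilon)+f(s,\upsilon)=\sum_{\alpha}\bigl[u_{\alpha}(s)G(s,\upsilon)+f_{\alpha}(s,\upsilon)\bigr]\mathfrak{N}_{\alpha}$, so its $\alpha$-th coefficient is $v_{\alpha}(s,\upsilon)=u_{\alpha}(s)G(s,\upsilon)+f_{\alpha}(s,\upsilon)$. Applying the definition of the Skorokhod integral $\delta(\chi_{[0,t]}v)$ from Section \ref{MSC}, which raises the index by a $p$ with $|p|=1$ and integrates the coefficient against $E_{p}$, and then matching coefficients of $\mathfrak{N}_{\alpha}$ on both sides of (\ref{12}), I would obtain $u_{0}(t)=w_{0}$ (the integral feeds only coefficients with $|\alpha|\ge 1$) together with exactly identity (\ref{fe3}) for $|\alpha|\ge 1$. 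The only bookkeeping point is that, for a fixed $\alpha$, the sum over $|p|=1$ in (\ref{fe3}) is effectively finite, since the terms $u_{\alpha-p},f_{\alpha-p}$ occur only for $p=\varepsilon_{k}\le\alpha$, of which there are at most $|\alpha|$.

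Next I would carry out the induction on $n=|\alpha|$. The base case $u_{0}(t)=w_{0}$ is constant, hence continuous. For the inductive step, assuming all $u_{\beta}$ with $|\beta|<n$ are already determined and continuous, hence bounded on $[0,T]$, formula (\ref{fe3}) defines $u_{\alpha}$ for $|\alpha|=n$ explicitly in terms of strictly lower-order coefficients, with no self-reference; this is precisely where triangularity is essential. I would verify finiteness of each integral by Cauchy--Schwarz over $U=[0,T]\times V$: because $u_{\alpha-p}$ is bounded and $G,f_{\alpha-p}\in L_{2}(U,\mu)$, the product $u_{\alpha-p}G+f_{\alpha-p}$ lies in $L_{2}(U,\mu)$, while $E_{p}=m_{k}\in L_{2}(U,\mu)$, so $\int_{U}\bigl|(u_{\alpha-p}G+f_{\alpha-p})\,E_{p}\bigr|\,d\mu<\infty$. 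The same estimate shows that the inner $\upsilon$-integral is an $L_{1}([0,T],dr)$ function of $r$, whence $u_{\alpha}(t)=w_{\alpha}+\int_{0}^{t}(\cdots)\,dr$ is absolutely continuous, in particular continuous, on $[0,T]$. Thus $u=\sum_{\alpha}u_{\alpha}\mathfrak{N}_{\alpha}\in C\mathcal{D}^{\prime}([0,T];\mathbf{R})$.

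Uniqueness then follows from the same structure: any solution in $C\mathcal{D}^{\prime}([0,T];\mathbf{R})$ must, after projection onto $\{\mathfrak{N}_{\alpha}\}$, satisfy (\ref{fe3}), and (\ref{fe3}) determines the coefficients recursively and uniquely once the lower-order ones are fixed, starting from $u_{0}=w_{0}$. I expect the step requiring the most care to be the first one, namely the rigorous passage from the Wick/Skorokhod equation (\ref{12}) to the scalar propagator (\ref{fe3})---the correct application of the Skorokhod-integral definition to the integrand $uG+f$ and the attendant index-shift bookkeeping---after which existence, continuity, and uniqueness are essentially immediate consequences of lower-triangularity.
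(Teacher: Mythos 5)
Your proposal is correct and follows essentially the same route as the paper's proof: plug the ansatz (\ref{13}) into (\ref{12}), match coefficients of $\mathfrak{N}_{\alpha}$ to obtain the lower-triangular system (\ref{fe3}), and solve it sequentially starting from $u_{0}(t)=w_{0}$. In fact you supply details the paper leaves implicit (the finiteness of the sum over $|p|=1$, the Cauchy--Schwarz integrability bound, and the explicit induction giving continuity and uniqueness), so your write-up is a fleshed-out version of the paper's argument rather than a different one.
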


\begin{proof}
We seek the solution $u$ to (\ref{12}) in the form of (\ref{13}) with
continuous coefficients $u_{\alpha }$. Plugging the series (\ref{13}) into (%
\ref{12}) we immediately get the system (\ref{fe3}). Since the system is
triangular, starting with $u_{0}\left( t\right) =w_{0}$ we find unique
continuous $u_{\alpha }\left( t\right) $ for $\left\vert \alpha \right\vert
\geq 1.$
\end{proof}

Let 
\begin{equation*}
H_{k}(t)=\int_{0}^{t}\int G\left( s,\upsilon \right) m_{k}(s,\upsilon
)dsd\pi ,H(t)^{\alpha }=\prod_{k}H_{k}(t)^{\alpha _{k}},\alpha \in J.
\end{equation*}%
For $w=\sum_{\alpha }w_{\alpha }\mathfrak{N}_{\alpha }\in \mathcal{D}%
^{\prime }$, let%
\begin{equation*}
\left\vert \left\vert w\right\vert \right\vert ^{2}=\sum_{\alpha }\left\vert
w_{\alpha }\right\vert ^{2}\alpha !+\sup_{t}\sum_{\alpha }\alpha !\left(
\sum_{\beta \leq \alpha }w_{\alpha -\beta }\frac{H(t)^{\beta }}{\beta !}%
\right) ^{2}
\end{equation*}

\begin{lemma}
\label{lel2}Let $f=0,w=\sum_{\alpha }w_{\alpha }\mathfrak{N}_{\alpha }\in 
\mathcal{D}^{\prime }$.

(i) The solution to (\ref{12}) is given by%
\begin{equation}
u\left( t\right) =w\diamond \exp ^{\diamond }\left\{ \mathfrak{N}\left( \chi
_{\lbrack 0,t]}G\right) \right\} =\sum_{\alpha }\sum_{\beta \leq \alpha
}w_{\alpha -\beta }\frac{H(r)^{\beta }}{\beta !}\mathfrak{N}_{\alpha },0\leq
r\leq T.  \label{f5}
\end{equation}

(ii) If $w\in \mathcal{S}^{\prime }\left( \mathbf{R}\right) $, then $u\in C%
\mathcal{S}^{\prime }\left( [0,T];\mathbf{R}\right) ;$

(iii) 
\begin{equation*}
\sup_{t}\mathbf{E}\left[ u(t)^{2}\right] \leq \left\vert \left\vert
w\right\vert \right\vert ^{2},
\end{equation*}%
i.e. it is in $L^{2}\left( \Omega ,\mathbf{P}\right) $ if $\left\vert
\left\vert w\right\vert \right\vert <\infty $ (see Example \ref{exa1}\
below).

(iv) If $w=w_{0}$ is a constant, then $u(t)$ is adapted and%
\begin{equation*}
\sup_{t}\mathbf{E}\left[ u(t)^{2}\right] =w_{0}^{2}\exp \left\{ \int
\left\vert G\right\vert ^{2}d\mu \right\} .
\end{equation*}
\end{lemma}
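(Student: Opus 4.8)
The plan is to dispatch the four claims in order, leaning on the unique solvability from Lemma \ref{lel1} and the Wick-exponential computations of Corollary \ref{lr1} and Proposition \ref{prop6}. For (i) I would not re-solve the equation but instead \emph{verify} that the proposed closed form reproduces the propagator (\ref{fe3}). By the definition of $\diamond$ and Corollary \ref{lr1}, the coefficients of $w\diamond\exp^{\diamond}\{\mathfrak{N}(\chi_{[0,t]}G)\}$ are precisely $u_{\alpha}(t)=\sum_{\beta\le\alpha}w_{\alpha-\beta}H(t)^{\beta}/\beta!$, so it suffices to check that these satisfy $u_{\alpha}(0)=w_{\alpha}$ and the differentiated form of (\ref{fe3}) with $f=0$. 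Differentiating in $t$, applying the product rule to $H(t)^{\beta}=\prod_{k}H_{k}(t)^{\beta_{k}}$ with $\dot{H}_{k}(t)=\int_{V}G(t,\cdot)m_{k}\,d\pi$ and $E_{\varepsilon_{k}}=m_{k}$, and reindexing $\beta\mapsto\beta-\varepsilon_{k}$, the sum telescopes to $\sum_{|p|=1}u_{\alpha-p}(t)\int_{V}G(t,\cdot)E_{p}\,d\pi$, which is exactly the right-hand side of (\ref{fe3}); since $H(0)^{\beta}=0$ for $|\beta|\ge1$, the initial condition holds. Uniqueness in $C\mathcal{D}^{\prime}([0,T];\mathbf{R})$ from Lemma \ref{lel1} then identifies this as the solution. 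Equivalently, one checks that $M_{t}:=\exp^{\diamond}\{\mathfrak{N}(\chi_{[0,t]}G)\}$ solves the homogeneous equation with $M_{0}=1$ and passes to $w\diamond M_{t}$ by bilinearity of $\diamond$ and its commutation with $\int\!\cdot\,\diamond\,\mathfrak{N}(ds,d\upsilon)$.

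Claim (iii) is then immediate. By orthogonality of $\{\mathfrak{N}_{\alpha}\}$ and $\mathbf{E}[\mathfrak{N}_{\alpha}^{2}]=\alpha!$, we have $\mathbf{E}[u(t)^{2}]=\sum_{\alpha}\alpha!\,u_{\alpha}(t)^{2}=\sum_{\alpha}\alpha!(\sum_{\beta\le\alpha}w_{\alpha-\beta}H(t)^{\beta}/\beta!)^{2}$, which is exactly the quantity whose supremum over $t$ forms the second term defining $\|w\|^{2}$. Adding the nonnegative first term $\sum_{\alpha}|w_{\alpha}|^{2}\alpha!$ gives $\sup_{t}\mathbf{E}[u(t)^{2}]\le\|w\|^{2}$, so $u(t)\in L^{2}(\Omega,\mathbf{P})$ whenever $\|w\|<\infty$.

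The genuinely technical assertion is (ii): that $w\in\mathcal{S}^{\prime}$ forces $u(t)\in\mathcal{S}^{\prime}$ degree by degree, with continuity in $t$. Here I would decompose into homogeneous chaos. Writing $w=\sum_{j}w^{(j)}$ and $M_{t}=\sum_{i}M_{t}^{(i)}$ with $M_{t}^{(i)}=\sum_{|\delta|=i}(H(t)^{\delta}/\delta!)\mathfrak{N}_{\delta}$, the degree-$n$ part of $u(t)=w\diamond M_{t}$ is the \emph{finite} sum $\sum_{j+i=n}w^{(j)}\diamond M_{t}^{(i)}$. Using the tensor description of the Wick product on multiple integrals (Appendix), $I_{j}(f)\diamond I_{i}(g)=I_{j+i}(\widetilde{f\otimes g})$, together with $|\widetilde{f\otimes g}|\le|f||g|$, one obtains $\|w^{(j)}\diamond M_{t}^{(i)}\|^{2}\le\binom{j+i}{j}\|w^{(j)}\|^{2}\|M_{t}^{(i)}\|^{2}$, where $\|M_{t}^{(i)}\|^{2}=(\sum_{k}H_{k}(t)^{2})^{i}/i!=|\chi_{[0,t]}G|_{L^{2}(\mu)}^{2i}/i!$ by Parseval and the multinomial formula. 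Each $\|w^{(j)}\|^{2}=\sum_{|\gamma|=j}w_{\gamma}^{2}\gamma!$ is finite because $w\in\mathcal{S}^{\prime}$, so every degree-$n$ component lies in $L^{2}$, i.e. $\sum_{|\alpha|=n}u_{\alpha}(t)^{2}\alpha!<\infty$, which is precisely membership in $\mathcal{S}^{\prime}$. For the continuity statement $u\in C\mathcal{S}^{\prime}$ I would note that $t\mapsto\chi_{[0,t]}G$ is continuous into $L^{2}(\mu)$, hence each kernel $(\chi_{[0,t]}G)^{\otimes i}$ is continuous in $\mathbf{H}^{\hat{\otimes}i}$, and the bilinear Wick bound above makes the finite-sum kernel of degree $n$ continuous in the $\mathbf{H}^{\hat{\otimes}n}$-norm. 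I expect this uniform control of the Wick convolution across chaos degrees to be the main obstacle.

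Finally, (iv) is a specialization of the above. If $w=w_{0}$ is constant then $u(t)=w_{0}\diamond M_{t}=w_{0}\exp^{\diamond}\{\mathfrak{N}(\chi_{[0,t]}G)\}$, which is adapted by Corollary \ref{lr1} since its degree-$n$ kernel $(\chi_{[0,t]}G)^{\otimes n}$ is supported in $Q_{t}^{n}$. By (\ref{f1}) of Proposition \ref{prop6}, $\mathbf{E}[u(t)^{2}]=w_{0}^{2}\exp\{|\chi_{[0,t]}G|_{L^{2}(\mu)}^{2}\}=w_{0}^{2}\exp\{\int_{0}^{t}\!\int_{V}G^{2}\,d\pi\,ds\}$; the exponent is nondecreasing in $t$, so the supremum is attained at $t=T$ and equals $w_{0}^{2}\exp\{\int|G|^{2}d\mu\}$.
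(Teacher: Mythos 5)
Your proposal is correct, and its treatment of the central claim (i) takes a genuinely different route from the paper's. The paper verifies the closed form $v(t)=w\diamond\exp^{\diamond}\left\{ \mathfrak{N}\left( \chi _{[0,t]}G\right) \right\}$ directly at the level of the Skorokhod integral: using the tensor representation $H(s,t)^{\beta }/\beta !=\frac{1}{\left\vert \beta \right\vert !\beta !}\int \left( \chi _{[s,t]}G\right) ^{\otimes \left\vert \beta \right\vert }E_{\beta }\,d\mu _{\left\vert \beta \right\vert }$, it expands $\delta \left( \chi _{[0,t]}vG\right)$ and carries out a combinatorial telescoping to show this equals $v(t)-w$, so that (\ref{12}) holds, with uniqueness supplied by Lemma \ref{lel1}. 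You instead verify the propagator: differentiate the coefficients $u_{\alpha }(t)=\sum_{\beta \leq \alpha }w_{\alpha -\beta }H(t)^{\beta }/\beta !$ by the product rule (legitimate, since the sum over $\beta \leq \alpha $ is finite and each $H_{k}$ is absolutely continuous), reindex $\beta \mapsto \beta -\varepsilon _{k}$, and recover exactly the triangular system (\ref{fe3}) with $f=0$ together with the initial condition; Lemma \ref{lel1} then identifies the formula as the unique solution. Both arguments rest on Lemma \ref{lel1} for uniqueness, but yours replaces the paper's Wick--Skorokhod combinatorics by elementary scalar calculus on the $H_{k}(t)$, which is shorter and less error-prone; the cost is that it does not rehearse the Skorokhod-integral telescoping that the paper reuses almost verbatim in the inhomogeneous case (Proposition \ref{prop8}). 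For (ii), where the paper only cites Lemma \ref{fl1} and Proposition \ref{prop6}, your chaos-degree decomposition with the bound $\mathbf{E}\left[ \left( I_{j}(f)\diamond I_{i}(g)\right) ^{2}\right] \leq \frac{(i+j)!}{i!\,j!}\,\mathbf{E}\left[ I_{j}(f)^{2}\right] \mathbf{E}\left[ I_{i}(g)^{2}\right]$, coming from $|\widetilde{f\otimes g}|\leq |f|\,|g|$, is a correct and useful filling-in of what the paper treats as immediate, and it also yields the kernel continuity needed for $u\in C\mathcal{S}^{\prime }\left( [0,T];\mathbf{R}\right)$. Parts (iii) and (iv) coincide with the paper's reading of (\ref{f5}), (\ref{f1}) and Corollary \ref{lr1}.
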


\begin{proof}
(i) Let $M_{t}=\exp ^{\diamond }\left\{ \mathfrak{N}\left( \chi _{\lbrack
0,t]}G\right) \right\} $. By Corollary \ref{lr1}, 
\begin{equation*}
v(r)=w\diamond M_{r}=\sum_{\alpha }\sum_{\beta \leq \alpha }w_{\alpha -\beta
}\frac{H(r)^{\beta }}{\beta !}\mathfrak{N}_{\alpha },0\leq r\leq T.
\end{equation*}%
We will show that $v$ solves (\ref{12}). Indeed, 
\begin{eqnarray*}
&&\delta \left( \chi _{\lbrack 0,t]}vG\right) \\
&=&\sum_{|\alpha |\geq 1}\sum_{\left\vert p\right\vert
=1}\int_{0}^{t}\int_{V}\sum_{\beta \leq \alpha -p}w_{\alpha -p-\beta }\frac{%
H(r)^{\beta }}{\beta !}G\left( r,\upsilon \right) E_{p}(r,\upsilon )d\mu 
\mathfrak{N}_{\alpha } \\
&=&\sum_{|\alpha |\geq 1}\sum_{\left\vert p\right\vert
=1}\int_{U}\sum_{\beta \leq \alpha -p}w_{\alpha -p-\beta }\frac{\int \left(
\chi _{\lbrack 0,r]}G\right) ^{\otimes \left\vert \beta \right\vert
}E_{\beta }d\mu _{|\beta |}}{\left\vert \beta \right\vert !\beta !}\chi
_{\lbrack 0,t]}(r)G\left( r,\upsilon \right) E_{p}(r,\upsilon )d\mu 
\mathfrak{N}_{\alpha } \\
&=&\sum_{|\alpha |\geq 1}\sum_{\left\vert p\right\vert
=1}\int_{U^{\left\vert \gamma \right\vert }}\sum_{\gamma \leq \alpha
,\left\vert \gamma \right\vert \geq 1}w_{\alpha -\gamma }\left( \chi
_{\lbrack 0,r]}G\right) ^{\otimes \left\vert \gamma \right\vert -1}\chi
_{\lbrack 0,t]}(r)G\left( r,\upsilon \right) \frac{E_{\gamma }}{(\left\vert
\gamma \right\vert -1)!\gamma !}d\mu _{\left\vert \gamma \right\vert }%
\mathfrak{N}_{\alpha } \\
&=&\sum_{|\alpha |\geq 1}\sum_{\gamma \leq \alpha ,\left\vert \gamma
\right\vert \geq 1}\int w_{\alpha -\gamma }\left( \chi _{\lbrack
0,t]}G\right) ^{\otimes |\gamma |}\frac{E_{\gamma }}{|\gamma |!\gamma !}d\mu
_{\left\vert \gamma \right\vert }\mathfrak{N}_{\alpha }=v\left( t\right) -w,
\end{eqnarray*}%
and (\ref{12}) holds.

(ii) follows immediately by Lemma \ref{fl1} and Proposition \ref{prop6}. The
part (iii) is a direct consequence of (\ref{f5}). Finally, (iv) follows from
(\ref{f5}), Proposition \ref{prop6} and Corollary \ref{lr1}.
\end{proof}

\begin{example}
\label{exa1}Let $F\in L^{2}\left( U,d\mu \right) $. Taking $w=\exp
^{\diamond }\left\{ \mathfrak{N}\left( F\right) \right\} $ in (\ref{f5}), we
see that the solution to (\ref{12})%
\begin{eqnarray*}
u(t) &=&\exp ^{\diamond }\left\{ \mathfrak{N}\left( F\right) \right\}
\diamond \exp ^{\diamond }\left\{ \mathfrak{N}\left( \chi _{\lbrack
0,t]}G\right) \right\} \\
&=&\exp ^{\diamond }\left\{ \mathfrak{N}\left( F\right) +\mathfrak{N}\left(
\chi _{\lbrack 0,t]}G\right) \right\}
\end{eqnarray*}%
is clearly non-adapted in general but $\sup_{t}\mathbf{E}\left[ u(t)^{2}%
\right] <\infty $ (Proposition \ref{prop6}).
\end{example}

Let for $s\leq t,$ 
\begin{equation*}
H_{k}(s,t)=\int \chi _{\lbrack s,t]}Gm_{k}d\mu ,H(s,t)^{\alpha
}=\prod_{k}H_{k}(s,t)^{\alpha _{k}},\alpha \in J.
\end{equation*}%
For $f=\sum_{\alpha }f_{\alpha }\left( t,\upsilon \right) \mathfrak{N}%
_{\alpha }\in L_{2}\left( \mathcal{D}^{\prime }\left( U\right) ,d\mu \right) 
$, let $\left\vert \left\vert f\right\vert \right\vert _{0,T}^{2}=\mathbf{E}%
\int_{U}\left\vert f\right\vert ^{2}d\mu $ and 
\begin{eqnarray*}
\left\vert \left\vert f\right\vert \right\vert _{T}^{2}
&=&\sup_{t}\sum_{\alpha }\alpha !\left( \sum_{\left\vert p\right\vert
=1}\int_{0}^{t}\int_{U}\sum_{\beta +p\leq \alpha ,\left\vert \beta
\right\vert \leq n}f_{\alpha -p-\beta }(s,\upsilon )E_{p}(s,\upsilon )\frac{%
H(s,t)^{\beta }}{\beta !}d\mu \right) ^{2} \\
&&+\sum_{\alpha }\int_{U}\left\vert f_{\alpha }\right\vert ^{2}d\mu \alpha !.
\end{eqnarray*}

\begin{proposition}
\label{prop8}Let $~f=\sum_{\alpha }f_{\alpha }\left( t,\upsilon \right) 
\mathfrak{N}_{\alpha }\in L_{2}\left( \mathcal{D}^{\prime }\left( U\right)
,d\mu \right) ,w=\sum_{\alpha }w_{\alpha }\mathfrak{N}_{\alpha }\in \mathcal{%
D}^{\prime }.$

(i) The unique solution to (\ref{12}) in $C\mathcal{D}^{\prime }\left( [0,T];%
\mathbf{R}\right) $ can be given as 
\begin{eqnarray}
&&u\left( t\right)  \label{for0} \\
&=&w\diamond \exp ^{\diamond }\left\{ \mathfrak{N}\left( \chi _{\lbrack
0,t]}G\right) \right\} +\int_{0}^{t}\int \exp ^{\diamond }\left\{ \mathfrak{N%
}\left( \chi _{\lbrack s,t]}G\right) \right\} \diamond f\left( s\,,\upsilon
\right) \diamond \mathfrak{N(}ds,d\upsilon )  \notag \\
&=&\sum_{\left\vert \alpha \right\vert \geq 1}\sum_{\left\vert p\right\vert
=1}\int_{0}^{t}\int_{U}\sum_{\beta +p\leq \alpha ,\left\vert \beta
\right\vert \leq n}f_{\alpha -p-\beta }(s,\upsilon )E_{p}(s,\upsilon )\frac{%
H(s,t)^{\beta }}{\beta !}d\mu \mathfrak{N}_{\alpha }  \notag \\
&&+\sum_{\alpha }\sum_{\beta \leq \alpha }w_{\alpha -\beta }\frac{%
H(r)^{\beta }}{\beta !}\mathfrak{N}_{\alpha }.  \notag
\end{eqnarray}

(ii) The solution is the limit of Picards iterations $u^{n}\left( t\right) $%
: $u^{0}\left( t\right) =w+\int_{0}^{t}\int f(s,\upsilon )\diamond \mathfrak{%
N}\left( ds,d\upsilon \right) ,$%
\begin{equation}
u^{n+1}\left( t\right) =w+\int_{0}^{t}\int [u^{n}(s)G\left( s,\upsilon
\right) +f(s,\upsilon )]\diamond \mathfrak{N}\left( ds,d\upsilon \right)
,0\leq t\leq T.  \label{for00}
\end{equation}%
In fact,%
\begin{equation}
u^{n}(t)=w\diamond \sum_{k=0}^{n}\frac{\mathfrak{N}\left( \chi _{\lbrack
0,t]}G\right) ^{\diamond k}}{k!}+\int_{0}^{t}\int \sum_{k=0}^{n}\frac{%
\mathfrak{N}\left( \chi _{\lbrack s,t]}G\right) ^{\diamond k}}{k!}\diamond
f(s,\upsilon )\diamond \mathfrak{N}\left( ds,d\upsilon \right) .
\label{for1}
\end{equation}%
If $f\in \mathcal{S}^{\prime }\left( \mathbf{H,R}\right) $ and $w\in 
\mathcal{S}^{\prime }\left( \mathbf{R}\right) $, then $u^{n},u\in C\mathcal{S%
}^{\prime }\left( [0,T];\mathbf{R}\right) ;$

(iii) 
\begin{equation*}
\sup_{t}\mathbf{E}\left[ u\left( t\right) ^{2}\right] \leq 2\left(
\left\vert \left\vert w\right\vert \right\vert ^{2}+\left\vert \left\vert
f\right\vert \right\vert _{T}^{2}\right) .
\end{equation*}

(iv) If $w=w_{0}$ is deterministic and $f$ is adapted with $\left\vert
\left\vert f\right\vert \right\vert _{0,T}^{2}<\infty $, then $u\left(
t\right) $ is adapted and square integrable:%
\begin{equation*}
\sup_{t}\mathbf{E}\left[ u\left( t\right) ^{2}\right] \leq C\left( w_{0}^{2}+%
\mathbf{E}\int_{U}\left\vert f\right\vert ^{2}d\mu \right) .
\end{equation*}
\end{proposition}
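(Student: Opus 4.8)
The plan is to observe that uniqueness in $C\mathcal{D}^{\prime }([0,T];\mathbf{R})$ is already secured by Lemma \ref{lel1}, so for (i) it suffices to verify that the closed-form expression on the right of (\ref{for0}) does solve (\ref{12}), equivalently that its propagator coefficients satisfy the triangular recursion (\ref{fe3}). I would decompose the candidate into its homogeneous part $w\diamond \exp^{\diamond }\{\mathfrak{N}(\chi_{[0,t]}G)\}$ and its Duhamel part $\int_{0}^{t}\int \exp^{\diamond }\{\mathfrak{N}(\chi_{[s,t]}G)\}\diamond f(s,\upsilon)\diamond \mathfrak{N}(ds,d\upsilon)$. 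The homogeneous part solves the $f=0$ equation by Lemma \ref{lel2}(i), so by linearity it remains to check that the Duhamel term solves (\ref{12}) with $w=0$. The key structural input is the semigroup identity $\exp^{\diamond }\{\mathfrak{N}(\chi_{[0,t]}G)\}=\exp^{\diamond }\{\mathfrak{N}(\chi_{[0,s]}G)\}\diamond \exp^{\diamond }\{\mathfrak{N}(\chi_{[s,t]}G)\}$, which follows from the additivity of Wick exponentials in Example \ref{exa1} and Corollary \ref{lr1}.

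To carry this out I would repeat, essentially verbatim, the computation in the proof of Lemma \ref{lel2}(i): apply $\delta (\chi_{[0,t]}(\cdot)G)$ to the Duhamel term, expand $\exp^{\diamond }$ into its $\mathfrak{N}_\alpha$-coefficients via Proposition \ref{prop6}, reindex the nested sums over the multiindices $\beta ,p,\gamma$, and recognize the result as the Duhamel term at time $t$ minus its value forced by $f$ over $[0,t]$. Matching coefficients then reproduces (\ref{fe3}) term by term. This combinatorial reindexing — tracking the factor $H(s,t)^{\beta }/\beta !$ and the symmetrizations hidden in $E_{p}$ — is the main obstacle; everything else is bookkeeping.

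For (ii) I would prove the explicit Picard formula (\ref{for1}) by induction on $n$. The base case $n=0$ reduces to $\mathfrak{N}^{\diamond 0}=1$, giving exactly $u^{0}(t)=w+\int_{0}^{t}\int f\diamond \mathfrak{N}(ds,d\upsilon)$. For the inductive step I would insert the formula for $u^{n}$ into the scheme (\ref{for00}), apply $\delta (\chi_{[0,t]}(\cdot)G)$, and use the same semigroup identity to see that multiplication by $\int_{0}^{t}\int(\cdot)G\diamond \mathfrak{N}$ advances each partial Wick-exponential sum from order $k$ to order $k+1$, which is precisely the passage from $\sum_{k=0}^{n}$ to $\sum_{k=0}^{n+1}$. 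Convergence $u^{n}\to u$ is then immediate in $\mathcal{D}^{\prime }$: for each fixed $\alpha $ only finitely many $k$ contribute, so the coefficients $u_{\alpha }^{n}(t)$ stabilize, and the limit agrees with (\ref{for0}). The regularity claim ($f\in \mathcal{S}^{\prime }(\mathbf{H},\mathbf{R})$, $w\in \mathcal{S}^{\prime }(\mathbf{R})$ imply $u^{n},u\in C\mathcal{S}^{\prime }$) follows from Proposition \ref{prop6} (the Wick exponential of an $L^{2}$ symbol lies in $\mathcal{S}^{\prime }$) together with Lemma \ref{fl1} for the stability of $\mathcal{S}^{\prime }$ under the relevant Wick products, plus continuity in $t$ of each coefficient.

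Parts (iii) and (iv) are then corollaries. For (iii) I would write $u(t)$ as the sum of its homogeneous and Duhamel parts, apply $(a+b)^{2}\leq 2a^{2}+2b^{2}$ to the propagator coefficients, and use $\mathbf{E}[u(t)^{2}]=\sum_{\alpha }|u_{\alpha }(t)|^{2}\alpha !$; the homogeneous contribution is bounded by $\left\Vert w\right\Vert ^{2}$ and the Duhamel contribution by $\left\Vert f\right\Vert _{T}^{2}$ directly from the definitions of those norms, after taking $\sup_{t}$. For (iv), when $w=w_{0}$ is deterministic the homogeneous part is adapted by Corollary \ref{lr1}, and the Duhamel part is the Skorokhod integral of an adapted integrand, hence adapted by Proposition \ref{prop4}; so $u$ is adapted. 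Then, since the Skorokhod integral has zero mean and the cross term with the constant $w_{0}$ vanishes, the Ito--Skorokhod isometry for adapted fields (Corollary \ref{c2}) gives $\mathbf{E}[u(t)^{2}]=w_{0}^{2}+\mathbf{E}\int_{0}^{t}\int_{V}(u(s)G+f)^{2}d\pi ds$. Bounding $(u(s)G+f)^{2}\leq 2u(s)^{2}G^{2}+2f^{2}$, using that $u(s)$ is independent of $\upsilon $, and applying Gronwall's inequality with $g(s)=\int_{V}G(s,\upsilon)^{2}d\pi $ yields $\sup_{t}\mathbf{E}[u(t)^{2}]\leq C(w_{0}^{2}+\mathbf{E}\int_{U}|f|^{2}d\mu )$ with $C=\exp\{2\int_{U}|G|^{2}d\mu \}$.
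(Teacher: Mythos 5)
Your treatment of (i)--(iii) is essentially the paper's own proof: reduction to $w=0$ via Lemma \ref{lel2}, verification that the Duhamel term solves (\ref{12}) by expanding the Wick exponentials into $\mathfrak{N}_{\alpha }$-coefficients and reindexing, the same induction for the Picard formula (\ref{for1}) with coefficientwise stabilization, and reading (iii) off the explicit coefficients in (\ref{for0}) together with the definitions of $\left\Vert w\right\Vert $ and $\left\Vert f\right\Vert _{T}$. Part (iv), however, contains two genuine gaps. First, your adaptedness argument fails: the integrand of the Duhamel term, $\exp ^{\diamond }\left\{ \mathfrak{N}\left( \chi _{[s,t]}G\right) \right\} \diamond f\left( s,\upsilon \right) $, is \emph{not} an adapted field in $\left( s,\upsilon \right) $ --- by Corollary \ref{lr1} its chaos kernels are supported in $\left( [s,t]\times V\right) ^{k}$, i.e.\ it anticipates the noise after time $s$; moreover it depends on the terminal time $t$, so as $t$ varies one is not looking at $\delta \left( \chi _{[0,t]}l\right) $ for a single fixed field $l$. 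Proposition \ref{prop4} therefore does not apply to this representation. The conclusion is true, but it must be obtained either by inspecting the supports of the kernels in (\ref{for0}) directly, or, as the paper does, by noting that the Picard iterates (\ref{for00}) have \emph{adapted} integrands $u^{n}(s)G+f(s,\upsilon )$, so each $u^{n}$ is adapted by Proposition \ref{prop4} and induction, and the support condition survives the coefficientwise limit $u^{n}\rightarrow u$.

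Second, applying the It\^{o}--Skorokhod isometry and Gronwall directly to $u$ is circular. Corollary \ref{c2} is stated for elements of $\mathcal{S}$ (finitely many chaos levels), and even granting the isometry identity for $u$ with values in $[0,\infty ]$, the Gronwall step is vacuous unless one already knows $\phi (t)=\mathbf{E}\left[ u(t)^{2}\right] <\infty $: the inequality $\phi (t)\leq B+C\int_{0}^{t}\phi (s)g(s)ds$ is satisfied by $\phi \equiv +\infty $, so no bound follows. Since square integrability of $u$ is exactly what (iv) asserts, you cannot presuppose it. This is precisely why the paper runs the isometry--Gronwall argument on the iterates $u^{n}$ rather than on $u$: each $u^{n}$ is square integrable by induction on $n$, Gronwall gives $\sup_{n,t}\mathbf{E}\left[ u^{n}(t)^{2}\right] <\infty $, and a second Gronwall argument yields $\sum_{n}\sup_{t}\mathbf{E}\left[ \left( u^{n+1}(t)-u^{n}(t)\right) ^{2}\right] <\infty $, so the limit inherits the stated bound. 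Your part (iv) should be rewritten along these lines; the ingredients are already available in your part (ii).
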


\begin{proof}
Because of Lemma \ref{lel2} we assume $w=0$.

(i) Let 
\begin{eqnarray*}
l\left( s,\upsilon \right) &=&f\left( s,\upsilon \right) \diamond \exp
^{\diamond }\left\{ \mathfrak{N}\left( \chi _{\lbrack s,r]}\right) G\right\}
\\
&=&\sum_{\alpha }\sum_{\beta \leq \alpha ,\left\vert \beta \right\vert \leq
n}f_{\alpha -\beta }(s,\upsilon )\frac{H(s,r)^{\beta }}{\beta !},0\leq s\leq
r\leq T \\
&=&\sum_{\alpha }\sum_{\beta \leq \alpha ,\left\vert \beta \right\vert \leq
n}f_{\alpha -\beta }(s,\upsilon )\frac{1}{\left\vert \beta \right\vert
!\beta !}\int \left( \chi _{\lbrack s,t]}G\right) ^{\otimes \left\vert \beta
\right\vert }E_{\beta }d\mu _{\left\vert \beta \right\vert }\mathfrak{N}%
_{\alpha }.
\end{eqnarray*}%
and set%
\begin{eqnarray*}
&&v(r) \\
&=&\int_{0}^{r}\int l(s,\upsilon )\diamond \mathfrak{N}\left( ds,d\upsilon
\right) \\
&=&\sum_{\left\vert \alpha \right\vert \geq 1}\sum_{\left\vert p\right\vert
=1,p\leq \alpha }\int \sum_{\beta +p\leq \alpha ,\left\vert \beta
\right\vert \leq n}f_{\alpha -(p+\beta )}(s,\upsilon )E_{p}(s,\upsilon )d\mu 
\frac{H(s,t)^{\beta }}{\beta !}\mathfrak{N}_{\alpha },0\leq r\leq T.
\end{eqnarray*}%
For $r\in \lbrack 0,T],\upsilon =\left( s_{1},\upsilon _{1},\dots
,s_{k},\upsilon _{k}\right) \in U^{k},k\geq 1$, define%
\begin{eqnarray*}
\Phi \left( r,k,G,f\right) &=&\Phi \left( r,k,G,f\right) \left(
s_{1},\upsilon _{1},\dots ,s_{k},\upsilon _{k}\right) \\
&=&\sum_{j=1}^{k}f\left( \hat{s},\upsilon _{j}\right) \prod_{i\neq
j,i=1}^{k}\chi _{\left[ \hat{s},r\right] }\left( s_{i}\right) G\left(
s_{i},\upsilon _{i}\right) ,
\end{eqnarray*}%
where $\hat{s}=\min \left\{ s_{i},1\leq i\leq k\right\} .$

By Corollary \ref{lr1},%
\begin{eqnarray*}
&&v(r) \\
&=&\sum_{\left\vert \alpha \right\vert \geq 1,p\leq \alpha ,\left\vert
p\right\vert =1}\int_{0}^{r}\int \sum_{\beta +p\leq \alpha }f_{\alpha
-(p+\beta )}(s,\upsilon )\frac{E_{p}\left( s,\upsilon \right) }{\left\vert
\beta \right\vert !\beta !}\int \left( \chi _{\lbrack s,r]}G\right)
^{\otimes \left\vert \beta \right\vert }E_{\beta }d\mu _{\left\vert \beta
\right\vert }d\mu \mathfrak{N}_{\alpha } \\
&=&\sum_{\alpha }\sum_{\beta ^{\prime }\leq \alpha ,1\leq \left\vert \beta
^{\prime }\right\vert }\int_{U^{\left\vert \beta \right\vert +1}}\left( \chi
_{\lbrack s,r]}G\right) ^{\otimes \left\vert \beta ^{\prime }\right\vert
-1}f_{\alpha -\beta ^{\prime }}(s,\upsilon )\frac{E_{\beta ^{\prime }}}{%
(\left\vert \beta ^{\prime }\right\vert -1)!\beta ^{\prime }!}d\mu
_{\left\vert \beta ^{\prime }\right\vert }\mathfrak{N}_{\alpha } \\
&=&\sum_{\alpha }\sum_{\beta ^{\prime }\leq \alpha ,1\leq \left\vert \beta
^{\prime }\right\vert }\int_{U^{\left\vert \beta ^{\prime }\right\vert
}}\Phi \left( r,\left\vert \beta ^{\prime }\right\vert ,G,f_{\alpha -\beta
^{\prime }}\right) \frac{E_{\beta ^{\prime }}}{\left\vert \beta ^{\prime
}\right\vert !\beta ^{\prime }!}d\mu _{\left\vert \beta ^{\prime
}\right\vert }\mathfrak{N}_{\alpha },0\leq r\leq T.
\end{eqnarray*}%
We will show that $v$ solves (\ref{12}). Indeed, 
\begin{eqnarray*}
&&\int_{0}^{t}v(r)G(r,\upsilon )\diamond \mathfrak{N}\left( dr,d\upsilon
\right) \\
&=&\sum_{|\alpha |\geq 2}\sum_{\left\vert p\right\vert =1}\sum_{\beta
^{\prime }+p\leq \alpha ,1\leq \left\vert \beta ^{\prime }\right\vert }\int
\chi _{\lbrack 0,t]}(r)G(r,\upsilon )E_{p}\left( r,\upsilon \right) \times \\
&&\times \int_{U^{\left\vert \beta ^{\prime }\right\vert }}\Phi \left(
r,\left\vert \beta ^{\prime }\right\vert ,G,f_{\alpha -(p+\beta ^{\prime
})}\right) \frac{E_{\beta ^{\prime }}}{\left\vert \beta ^{\prime
}\right\vert !\beta ^{\prime }!}d\mu _{\left\vert \beta ^{\prime
}\right\vert }d\mu \mathfrak{N}_{\alpha } \\
&=&\sum_{|\alpha |\geq 2}\sum_{\left\vert p\right\vert =1}\sum_{\gamma
=\beta ^{\prime }+p\leq \alpha ,2\leq \left\vert \gamma \right\vert (\leq
n+2)}\int_{U^{\left\vert \gamma \right\vert }}\chi _{\lbrack
0,t]}(r)G(r,\upsilon )\Phi \left( r,\left\vert \gamma \right\vert
-1,G,f_{\alpha -\gamma }\right) \times \\
&&\times \frac{E_{\gamma }}{(\left\vert \gamma \right\vert -1)!\gamma !}d\mu
_{\left\vert \beta ^{\prime }\right\vert }d\mu \mathfrak{N}_{\alpha } \\
&=&\sum_{|\alpha |\geq 2}\sum_{\gamma =\beta ^{\prime }+p\leq \alpha ,2\leq
\left\vert \gamma \right\vert (\leq n+2)}\int_{U^{\left\vert \gamma
\right\vert }}\Phi \left( t,\left\vert \gamma \right\vert ,G,f_{\alpha
-\gamma }\right) \frac{E_{\gamma }}{\left\vert \gamma \right\vert !\gamma !}%
d\mu _{\left\vert \beta ^{\prime }\right\vert }d\mu \mathfrak{N}_{\alpha }
\end{eqnarray*}

and we see that%
\begin{equation*}
\int_{0}^{t}v(r)G(r,\upsilon )\diamond \mathfrak{N}\left( dr,d\upsilon
\right) =v(t)-\int_{0}^{t}\int f\left( r\right) \diamond G\left( r,\upsilon
\right) \mathfrak{N}\left( dr,d\upsilon \right) .
\end{equation*}

(ii) Consider $u^{n}\left( t\right) $ defined by (\ref{for1}) with $w=0$.
Then 
\begin{equation}
u^{n}(t)=\sum_{\left\vert \alpha \right\vert \geq 1}\sum_{\left\vert
p\right\vert =1,p\leq \alpha }\int \sum_{\beta +p\leq \alpha ,\left\vert
\beta \right\vert \leq n}f_{\alpha -(p+\beta )}(s,\upsilon )E_{p}(s,\upsilon
)d\mu \frac{H(s,t)^{\beta }}{\beta !}\mathfrak{N}_{\alpha },0\leq r\leq T,
\label{for2}
\end{equation}%
and repeating the proof of part (i) we see that for $,0\leq t\leq T$%
\begin{equation*}
\int_{0}^{t}\int u^{n}\left( r\right) G(r,\upsilon )\diamond \mathfrak{N}%
\left( dr,d\upsilon \right) =u^{n+1}\left( t\right) -\int_{0}^{t}\int
f\left( s,\upsilon \right) \diamond \mathfrak{N}\left( ds,d\upsilon \right) .
\end{equation*}%
If $f\in \mathcal{S}^{\prime }\left( \mathbf{H,R}\right) $, then $u^{0}\in C%
\mathcal{S}^{\prime }\left( [0,T];\mathbf{R}\right) $. If $u^{n}\in C%
\mathcal{S}^{\prime }\left( [0,T];\mathbf{R}\right) $, then $u^{n}G\in 
\mathcal{S}^{\prime }\left( \mathbf{H,R}\right) .$ By Proposition \ref{prop1}%
, $u^{n+1}\in C\mathcal{S}^{\prime }\left( [0,T];\mathbf{R}\right) $ and the
statement follows by comparing (\ref{for2}) and (\ref{for0}).

The part (iii) is a direct consequence of (\ref{for0}).

(iv) Since $\mathbf{E}\int \left\vert f\right\vert ^{2}d\mu <\infty ,$ it
follows that $f\in \mathcal{S}^{\prime }\left( \mathbf{H},\mathbf{R}\right) $%
, and according to part (ii) and Proposition \ref{prop4}, all the iterations
are adapted. Therefore Ito isometry holds. Obviously,%
\begin{equation*}
\sup_{t}\mathbf{E}\left[ u^{0}\left( t\right) ^{2}\right] \leq \mathbf{E}%
\int_{U}\left\vert f\right\vert ^{2}d\mu <\infty .
\end{equation*}%
Assume $\sup_{t}$ $\mathbf{E}\left[ u^{n}\left( t\right) ^{2}\right] <\infty 
$. Using (\ref{for00}) and Ito isometry,%
\begin{equation*}
\mathbf{E}\left[ u^{n+1}\left( t\right) ^{2}\right] \leq C[\int_{0}^{t}\int 
\mathbf{E}\left( u^{n}\left( s\right) ^{2}\right) G(s,\upsilon )^{2}d\mu ds+%
\mathbf{E}\int_{0}^{t}\int \left\vert f\right\vert ^{2}dsd\pi ]
\end{equation*}%
and by Gronwall's lemma there is a constant $C$ independent of $n$ such that%
\begin{equation*}
\sup_{n,t}\mathbf{E}\left( u^{n}\left( t\right) ^{2}\right) \leq C\mathbf{E}%
\int_{0}^{T}\int \left\vert f\right\vert ^{2}dsd\pi .
\end{equation*}%
Similarly, using Gronwall's lemma, we show that%
\begin{equation*}
\sum_{n}\sup_{t}\mathbf{E}\left( [u^{n+1}\left( t\right)
-u^{n}(t)]^{2}\right) <\infty .
\end{equation*}%
The statement follows.
\end{proof}

\subsubsection{Linear parabolic SPDEs}

In this section we extend the results on the linear SDE to a simple
parabolic SPDE.

Again, let $U=\left[ 0,T\right] \times V,d\mu =dtd\pi $. We denote $\mathbf{R%
}_{T}^{d}=\mathbf{R}^{d}\times \lbrack 0,T]$ and suppose that the following
measurable functions are given 
\begin{equation*}
a:\mathbf{R}^{d}\rightarrow \mathbf{R}^{d^{2}}\;,\quad b:\mathbf{R}%
^{d}\rightarrow \mathbf{R}^{d}.~
\end{equation*}

The following is assumed.

\textbf{A1}. Functions $a,b$, are infinitely differentiable and bounded with
all derivatives, and the matrix $a=\left( a^{ij}(x)\right) $ is symmetric
and non-degenerate: for all $x$%
\begin{equation*}
a^{ij}(x)\xi _{i}\xi _{j}\geq \delta \left\vert \xi \right\vert ^{2},\xi \in 
\mathbf{R}^{d},
\end{equation*}%
for some $\delta >0.$

Let $H_{2}^{s}=H_{2}^{s}\left( \mathbf{R}^{d}\right) ,s=1,2,$ be the Sobolev
class of square-integrable functions $v$ on $\mathbf{R}^{d}$ having
generalized space derivatives up to the $s$- order with the finite norm 
\begin{equation*}
|v|_{s,2}=|v|_{2}+|D_{x}^{s}v|_{2}\;,
\end{equation*}%
where $|v|_{2}=(\int_{\mathbf{R}^{d}}|v|^{2}\,dx)^{1/2}$.

Let $G\in L^{2}\left( [0,T]\times V,d\mu \right) $ with $d\mu =dtd\pi $. Let 
$L^{2,1}=L^{2,1}(\mathbf{R}^{d}\times \lbrack 0,T]\times V,dxdtd\pi $ be the
space of all measurable functions $g$ on $\mathbf{R}^{d}\times \lbrack
0,T]\times V$ such that%
\begin{equation*}
\left\vert \left\vert g\right\vert \right\vert _{1,2}^{2}=\int_{0}^{T}\int_{%
\mathbf{R}^{d}}\int_{V}[\left\vert g\left( s,x,\upsilon \right) \right\vert
^{2}+\left\vert \nabla _{x}g\left( s,x,\upsilon \right) \right\vert
^{2}]dsdxd\pi <\infty .
\end{equation*}%
Let $w=\sum_{\alpha }w_{\alpha }\left( x\right) \mathfrak{N}_{\alpha }\in 
\mathcal{D}^{\prime }\left( H_{2}^{3}(\mathbf{R}^{d})\right) $ and $%
f=\sum_{\alpha }f_{\alpha }(x,s,\upsilon )\mathfrak{N}_{\alpha }\in \mathcal{%
D}^{\prime }\left( L^{2,1}\right) $. The main objective of this section is
to study the equation for $u\left( t\right) =u\left( t,x\right) ,$%
\begin{eqnarray}
\partial _{t}u(x,t) &=&{\mathcal{L}}u(x,t)  \label{pe1} \\
&&+\int_{U}(u(x,t)G\left( t,\upsilon \right) +f(x,t,\upsilon ))\diamond 
\mathfrak{\dot{N}}\left( t,\upsilon \right) \pi \left( d\upsilon \right) 
\notag \\
u(0,x) &=&\varphi (x)\;,  \notag
\end{eqnarray}%
where $\mathcal{L}u=a^{ij}(x)u_{x_{i}x_{j}}+b^{i}(x){u_{x_{i}}.}$
Equivalently, we understand (\ref{pe1}) as 
\begin{eqnarray}
u(t) &=&w+\int_{0}^{t}{\mathcal{L}}u(s)ds  \label{pe2} \\
&&+\int_{0}^{t}\int_{U}\left[ u(s)G\left( s,\upsilon \right) +f\left(
s,\upsilon \right) \right] \diamond \mathfrak{N}\left( ds,d\upsilon \right) ,
\notag
\end{eqnarray}%
$0\leq t\leq T.$

{We will seek a solution to (\ref{pe2}) in the form }%
\begin{equation}
u(t)\mathbf{=}\sum_{\alpha }u_{\alpha }(t)\mathfrak{N}_{\alpha }\in C%
\mathcal{D}^{\prime }([0,T];H_{2}^{2}).  \label{pf1}
\end{equation}

We start our analysis of equation (\ref{pe2}) by introducing the definition
of a solution in the "weak sense".

\begin{definition}
{\label{defs}We say that a generalized }$\mathcal{D}${-process }$%
u(t)=\sum_{\alpha }u_{\alpha }(t)\xi _{\alpha }\in C\mathcal{D}^{\prime
}([0,T],H_{2}^{2})${\ is }$\mathcal{D}${-}$H_{2}^{2}${\ solution of equation
(\ref{pe2}) in }$[0,T]${, if the equality (\ref{pe2})} {holds in }$\mathcal{D%
}(L^{2}(\mathbf{R}^{d}))${\ for every }$0\leq t\leq T${. }
\end{definition}

\begin{lemma}
\label{rem12}{Assume \textbf{A1} holds, }$w(x)\mathbf{=}\sum_{\alpha
}w_{\alpha }(x)\mathfrak{N}_{\alpha }\in \mathcal{D}^{\prime }(H_{2}^{3})$,

$g=\sum_{\alpha }g_{\alpha }(x,s,\upsilon )\mathfrak{N}_{\alpha }\in 
\mathcal{D}^{\prime }\left( L^{2,1}\right) ${. }Then there is a unique
solution to (\ref{pe2}) in $C\mathcal{D}^{\prime }([0,T],H_{2}^{2})$ (Recall 
$C\mathcal{D}^{\prime }([0,T],H_{2}^{2})$ is the class of all generalized
processes $u=\sum_{\alpha }u_{\alpha }(t)\mathfrak{N}_{\alpha }$ on $\left[
0,T\right] $ such that $u_{\alpha }$ is $H_{2}^{2}$-valued continuous on $%
\left[ 0,T\right] $ $\forall \alpha \in J.$). The solution $u$ given by (\ref%
{pf1}) has the following coefficients: $u_{0}\left( t\right) =w_{0},$ 
\begin{equation}
\left\{ 
\begin{array}{l}
\partial _{t}u_{\alpha }(t)={\mathcal{L}}u_{\alpha
}+\sum_{k}\int_{V}m_{k}(u_{\alpha (k)}G+f_{\alpha \left( k\right) })d\pi \\ 
u_{\alpha }(0)=w_{\alpha }.%
\end{array}%
\right.  \label{pfo1}
\end{equation}
\end{lemma}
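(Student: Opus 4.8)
The plan is to reproduce the scheme of Lemma \ref{lel1} for the linear SDE, with the ordinary differential equations for the chaos coefficients now replaced by deterministic parabolic Cauchy problems, and with trivial ODE solvability replaced by the classical $L^{2}$-solvability theory for linear uniformly parabolic equations with smooth bounded coefficients (which applies under \textbf{A1}).

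First I would derive the propagator by substituting the ansatz $u(t)=\sum_{\alpha }u_{\alpha }(t)\mathfrak{N}_{\alpha }$ into (\ref{pe2}) and matching $\mathfrak{N}_{\alpha }$-coefficients in $L^{2}(\mathbf{R}^{d})$, which is exactly what equality in $\mathcal{D}(L^{2}(\mathbf{R}^{d}))$ means. Since $G$ is deterministic, the integrand $uG+f$ has $\mathfrak{N}_{\beta }$-coefficient $u_{\beta }(s)G(s,\upsilon )+f_{\beta }(s,\upsilon )$, and the definition of the Skorokhod integral $\delta $ from Section \ref{MSC} (which raises the chaos order by one through contraction against $E_{p}$, $|p|=1$) gives the $\mathfrak{N}_{\alpha }$-coefficient of the noise term as $\sum_{|p|=1}\int_{0}^{t}\int_{V}[u_{\alpha -p}(s)G+f_{\alpha -p}(s,\upsilon )]E_{p}(s,\upsilon )\,d\pi \,ds=\sum_{k}\int_{0}^{t}\int_{V}m_{k}(u_{\alpha (k)}G+f_{\alpha (k)})\,d\pi \,ds$, the sum being finite since only the (at most $|\alpha |$) indices $k$ with $\alpha _{k}\geq 1$ satisfy $\alpha (k)\in J$. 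The drift contributes $\int_{0}^{t}\mathcal{L}u_{\alpha }(s)\,ds$ by term-by-term integration of $\mathcal{D}^{\prime }$-processes. Thus $u$ solves (\ref{pe2}) \emph{if and only if} the coefficients satisfy the integral form of (\ref{pfo1}) for every $\alpha $, and differentiating in $t$ yields (\ref{pfo1}). This equivalence reduces the assertion to the solvability and unique solvability of the triangular deterministic system (\ref{pfo1}).

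Next I would solve (\ref{pfo1}) recursively on $n=|\alpha |$. For $n=0$ the noise term drops out (no $k$ gives $\alpha (k)\in J$), so $u_{0}$ solves $\partial _{t}u_{0}=\mathcal{L}u_{0}$, $u_{0}(0)=w_{0}$; with $w_{0}\in H_{2}^{3}$ and \textbf{A1}, parabolic $L^{2}$-theory gives a unique $u_{0}\in C([0,T];H_{2}^{2})$. Assuming inductively that $u_{\beta }\in C([0,T];H_{2}^{2})$ is constructed for all $|\beta |\leq n-1$, each coefficient with $|\alpha |=n$ solves the inhomogeneous Cauchy problem $\partial _{t}u_{\alpha }=\mathcal{L}u_{\alpha }+h_{\alpha }$, $u_{\alpha }(0)=w_{\alpha }$, with the finite source $h_{\alpha }(t)=\sum_{k}\int_{V}m_{k}(u_{\alpha (k)}G+f_{\alpha (k)})\,d\pi $ built from already-known lower-order coefficients and the data. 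Parabolic $L^{2}$-theory then produces a unique $u_{\alpha }\in C([0,T];H_{2}^{2})$ once $h_{\alpha }$ is shown to lie in $L^{2}([0,T];H_{2}^{1})$.

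The step I expect to demand the most care is exactly this regularity bookkeeping for $h_{\alpha }$, and it is where the hypotheses $w\in \mathcal{D}^{\prime }(H_{2}^{3})$ and $f\in \mathcal{D}^{\prime }(L^{2,1})$ are used. The term $\int_{V}m_{k}u_{\alpha (k)}G\,d\pi =\big(\int_{V}m_{k}G\,d\pi \big)u_{\alpha (k)}$ inherits the full $H_{2}^{2}$ regularity of $u_{\alpha (k)}$ in $x$, the time-coefficient being in $L^{2}([0,T])$ by Cauchy–Schwarz from $G,m_{k}\in L^{2}$; the term $\int_{V}m_{k}f_{\alpha (k)}\,d\pi $ lies in $L^{2}([0,T];H_{2}^{1})$ precisely because membership in $L^{2,1}$ supplies one $x$-derivative of $f_{\alpha (k)}$ in $L^{2}$. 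Hence $h_{\alpha }\in L^{2}([0,T];H_{2}^{1})$, and together with $w_{\alpha }\in H_{2}^{3}\subset H_{2}^{2}$ the parabolic estimate yields $C([0,T];H_{2}^{2})$ regularity with \emph{no cumulative loss of derivatives} across levels, which is what makes the recursion close. Uniqueness is then immediate: by the equivalence of the first paragraph any $C\mathcal{D}^{\prime }([0,T];H_{2}^{2})$ solution of (\ref{pe2}) has coefficients solving (\ref{pfo1}), each level is a parabolic Cauchy problem with a unique solution, and the lower-triangular structure propagates uniqueness through all $\alpha \in J$.
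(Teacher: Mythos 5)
Your overall strategy is exactly the paper's: substitute the chaos expansion into (\ref{pe2}), match $\mathfrak{N}_{\alpha}$-coefficients in $\mathcal{D}(L^{2}(\mathbf{R}^{d}))$ to obtain the lower-triangular propagator (\ref{pfo1}), then solve that deterministic system level by level by classical parabolic theory; the paper's own proof is this argument compressed to a few lines, with the solvability of each Cauchy problem delegated to a citation of \cite{la}. Your derivation of the propagator (including the observation that only the finitely many $k$ with $\alpha_{k}\geq 1$ contribute, and that the $|\alpha|=0$ equation is the pure problem $\partial_{t}u_{0}=\mathcal{L}u_{0}$, $u_{0}(0)=w_{0}$) is correct and matches the paper.

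The gap is in the regularity bookkeeping that you yourself single out as the delicate step. Cauchy--Schwarz does not put the time coefficient $c_{k}(t)=\int_{V}m_{k}(t,\upsilon)G(t,\upsilon)\,d\pi$ in $L^{2}([0,T])$: all that $m_{k},G\in L^{2}(dt\,d\pi)$ gives is that $t\mapsto\Vert m_{k}(t,\cdot)\Vert_{L^{2}(V)}$ and $t\mapsto\Vert G(t,\cdot)\Vert_{L^{2}(V)}$ are each in $L^{2}([0,T])$, so their product, hence $c_{k}$, is only in $L^{1}([0,T])$; the same computation places $\int_{V}m_{k}f_{\alpha(k)}\,d\pi$ only in $L^{1}([0,T];H_{2}^{1})$, not $L^{2}([0,T];H_{2}^{1})$. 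For the $G$-term this is harmless: $c_{k}u_{\alpha(k)}\in L^{1}([0,T];H_{2}^{2})$, and an $L^{1}$-in-time source with full $H_{2}^{2}$ spatial regularity still yields $\int_{0}^{t}T_{t-s}h(s)\,ds\in C([0,T];H_{2}^{2})$ because $T_{t}$ is uniformly bounded on $H_{2}^{2}$. But for the $f$-term it is fatal to the step as you wrote it: a source in $L^{1}([0,T];H_{2}^{1})$ gives a mild solution only in $C([0,T];H_{2}^{1})$, and the smoothing estimate $\Vert T_{t-s}h\Vert_{H_{2}^{2}}\leq C(t-s)^{-1/2}\Vert h\Vert_{H_{2}^{1}}$ cannot be integrated against a merely $L^{1}$ density of $\Vert h(s)\Vert_{H_{2}^{1}}$. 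So the claim of "no cumulative loss of derivatives," and with it the $C([0,T];H_{2}^{2})$ regularity that closes your recursion, does not follow from the stated hypotheses by the argument given; one needs either additional time-integrability (for instance $\sup_{t}\Vert m_{k}(t,\cdot)\Vert_{L^{2}(V)}<\infty$ or bounded $G$), or more spatial regularity of $f$, or one must settle for $C([0,T];H_{2}^{1})$-valued coefficients. The paper sidesteps this issue entirely by citing \cite{la}; your more explicit route exposes it but, as written, does not resolve it.
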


\begin{proof}
We seek the solution $u$ to (\ref{pe2}) in the form of (\ref{pf1}) with
continuous coefficients $u_{\alpha }$. Plugging the series (\ref{pf1}) into (%
\ref{pe2}) we immediately get the system (\ref{pfo1}). Indeed, by
definition, for $t\in \lbrack 0,T],$%
\begin{eqnarray*}
&&\sum_{\alpha }u_{\alpha }(x,t)\mathfrak{N}_{\alpha }=\sum_{\alpha
}w_{\alpha }\left( x\right) \mathfrak{N}_{\alpha } \\
&&+\sum_{\alpha }\int_{0}^{t}{\mathcal{L}}u_{\alpha }(x,s)ds\mathfrak{N}%
_{\alpha }+\sum_{\alpha }\sum_{k}\int_{0}^{t}\int_{V}m_{k}[u_{\alpha
(k)}G+g_{\alpha \left( k\right) }]d\pi ds\mathfrak{N}_{\alpha }.
\end{eqnarray*}%
Since the system is triangular, starting with $u_{0}\left( t\right) =w_{0}$
we find unique continuous $u_{\alpha }\left( t\right) $ for $\left\vert
\alpha \right\vert \geq 1$ (see \cite{la})$.$
\end{proof}

Denote by $T_{t}f$ the solution of the problem 
\begin{equation*}
\left\{ 
\begin{array}{l}
\partial _{t}u={\mathcal{L}}u,\quad 0\leq t\leq T, \\ 
u(0,x)=h(x),x\in \mathbf{R}^{d}.%
\end{array}%
\right.
\end{equation*}

\begin{remark}
\medskip \noindent \label{re6}\textit{If \textbf{A1 }}holds, then it is well
known that%
\begin{equation}
|T_{t}h|_{L^{2}(\mathbf{R}^{d})}^{2}\leq e^{Ct}|h|_{L^{2}(\mathbf{R}%
^{d})}^{2},  \label{pf:10}
\end{equation}%
(see \cite{la}).
\end{remark}

Note that for each $\alpha $, the solution $u_{\alpha }$ of (\ref{pfo1})
satisfies for $t\in \lbrack 0,T],$%
\begin{eqnarray}
u_{\alpha }(t) &=&T_{t}w_{\alpha
}+\sum_{k}\int_{0}^{t}\int_{V}[m_{k}(s,\upsilon )(T_{t-s}u_{\alpha
(k)}(s)G(s,\upsilon )  \label{pf11} \\
&&+T_{t-s}g_{\alpha \left( k\right) }(s,\upsilon ))]dsd\pi .  \notag
\end{eqnarray}

\begin{lemma}
\label{lel3}{Assume A1 holds, }%
\begin{equation*}
w(x)\mathbf{=}\sum_{\alpha }w_{\alpha }(x)\mathfrak{N}_{\alpha }\in \mathcal{%
D}^{\prime }(H_{2}^{3}),g=\sum_{\alpha }g_{\alpha }(x,s,\upsilon )\mathfrak{N%
}_{\alpha }\in \mathcal{D}^{\prime }\left( L^{2,1}\right) {.}
\end{equation*}%
{\ }Then $u$ is the unique solution to (\ref{pe2}) in $C\mathcal{D}^{\prime
}([0,T],H_{2}^{2})$ iff it is the unique solution to%
\begin{eqnarray}
u(t) &=&\int_{0}^{t}\int_{U}\left[ T_{t-s}u(s)G\left( s,\upsilon \right)
+T_{t-s}g\left( s,\upsilon \right) \right] \diamond \mathfrak{N}\left(
ds,d\upsilon \right)  \label{pf3} \\
&&+T_{t}w,  \notag
\end{eqnarray}%
$0\leq t\leq T.$
\end{lemma}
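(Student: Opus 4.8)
The plan is to reduce the operator-valued identity to a family of deterministic parabolic problems indexed by $\alpha\in J$ and then invoke the standard equivalence between the strong (integrated) and mild (variation-of-constants) formulations of a linear parabolic Cauchy problem. Since equality in $\mathcal{D}^{\prime}(L^2(\mathbf{R}^d))$ holds if and only if all coefficients agree, it suffices to establish the equivalence coefficient by coefficient.

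First I would extract the $\alpha$-coefficient of each equation. Using the definition of the Skorokhod integral, together with $E_{\varepsilon_k}=m_k$ and $\alpha-\varepsilon_k=\alpha(k)$, the $\alpha$-coefficient of the stochastic term in (\ref{pe2}) is the deterministic source
\begin{equation*}
h_\alpha(s,x)=\sum_k\int_V m_k(s,\upsilon)\big[u_{\alpha(k)}(s,x)G(s,\upsilon)+g_{\alpha(k)}(s,x,\upsilon)\big]\,d\pi .
\end{equation*}
Thus the $\alpha$-coefficient of (\ref{pe2}) is exactly the integrated form of the propagator (\ref{pfo1}) with datum $w_\alpha$ and source $h_\alpha$. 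Because $G(s,\upsilon)$ does not depend on $x$ it commutes with $T_{t-s}$, and since $T_{t-s}$ is a bounded linear operator on $L^2(\mathbf{R}^d)$ it acts coefficient-wise on $\mathcal{D}^{\prime}$-elements (Remark \ref{rem11}); hence the $\alpha$-coefficient of (\ref{pf3}) is precisely the Duhamel formula (\ref{pf11}), i.e. $u_\alpha(t)=T_tw_\alpha+\int_0^t T_{t-s}h_\alpha(s)\,ds$.

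It then remains to prove the purely deterministic statement: a function $u_\alpha\in C([0,T];H_2^2)$ satisfies $u_\alpha(t)=w_\alpha+\int_0^t[\mathcal{L}u_\alpha(s)+h_\alpha(s)]\,ds$ if and only if it satisfies $u_\alpha(t)=T_tw_\alpha+\int_0^t T_{t-s}h_\alpha(s)\,ds$. Under \textbf{A1} the operator $\mathcal{L}$ generates the semigroup $T_t$ on $L^2(\mathbf{R}^d)$ with the estimate (\ref{pf:10}). For $w_\alpha\in H_2^3$ and $h_\alpha\in L^2([0,T];H_2^1)$ (which follows from $w\in\mathcal{D}^{\prime}(H_2^3)$, $g\in\mathcal{D}^{\prime}(L^{2,1})$, $G\in L^2(\mu)$ together with an induction on $|\alpha|$ along the lower-triangular structure) both sides lie in $C([0,T];H_2^2)$. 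I would prove the equivalence in the usual way: differentiating the Duhamel formula and using $\tfrac{d}{dt}T_t=\mathcal{L}T_t$ on the domain yields the integrated equation, while conversely, from $\dot u_\alpha=\mathcal{L}u_\alpha+h_\alpha$ one computes $\tfrac{d}{ds}[T_{t-s}u_\alpha(s)]=T_{t-s}(\dot u_\alpha(s)-\mathcal{L}u_\alpha(s))=T_{t-s}h_\alpha(s)$ and integrates in $s$ from $0$ to $t$ to recover the mild form. Uniqueness for (\ref{pe2}) is already provided by Lemma \ref{rem12} via the triangular structure, and the established equivalence transfers it to (\ref{pf3}).

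The main obstacle is the regularity bookkeeping needed to run the deterministic equivalence rigorously: one must ensure the source $h_\alpha$ has enough spatial regularity and time integrability for $\int_0^t T_{t-s}h_\alpha(s)\,ds$ to be a genuine $H_2^2$-valued continuous function and for the differentiation-under-the-semigroup identities to be legitimate. This is precisely where the hypotheses $w\in\mathcal{D}^{\prime}(H_2^3)$ and $g\in\mathcal{D}^{\prime}(L^{2,1})$ (one spatial derivative on the forcing) enter, through the parabolic $L^2$-regularity theory of \cite{la}. By comparison, the commutation of $T_{t-s}$ with the $\upsilon$-integration and $k$-summation defining the Skorokhod integral is routine, following from the boundedness of $T_{t-s}$ on $L^2(\mathbf{R}^d)$ and dominated convergence.
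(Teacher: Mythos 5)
Your proposal is correct and follows essentially the same route as the paper: the paper's proof reduces (\ref{pe2}) and (\ref{pf3}) coefficient-wise to the propagator system (\ref{pfo1}) via Lemma \ref{rem12} and then invokes the Duhamel representation (\ref{pf11}), which is exactly your coefficient-by-coefficient strong/mild equivalence. The only difference is that you spell out the deterministic semigroup argument and regularity bookkeeping behind (\ref{pf11}), which the paper simply asserts with a reference to \cite{la}.
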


\begin{proof}
Since (\ref{pf11}) holds, the statement is an immediate consequence of Lemma %
\ref{rem12}.
\end{proof}

The following statement holds.

\begin{proposition}
\medskip \noindent \label{propf1}Let \textbf{A1 }hold and $w=\sum_{\alpha
}w_{\alpha }\left( x\right) \mathfrak{N}_{\alpha }\in \mathcal{D}^{\prime
}\left( H_{2}^{2}(\mathbf{R}^{d})\right) $ and $g=\sum_{\alpha }f_{\alpha
}(x,s,\upsilon )\mathfrak{N}_{\alpha }\in \mathcal{D}^{\prime }\left(
L^{2,1}\right) .$

(i) The unique solution to (\ref{pe2}) is given by 
\begin{eqnarray*}
u\left( t\right) &=&T_{t}w(x)\diamond \exp ^{\diamond }\left\{ \mathfrak{N}%
\left( \chi _{\lbrack 0,t]}G\right) \right\} \\
&&+\int_{0}^{t}\int \exp ^{\diamond }\left\{ \mathfrak{N}\left( \chi
_{\lbrack s,t]}G\right) \right\} \diamond T_{t-s}g\left( s,x,\upsilon
\right) \diamond \mathfrak{N(}ds,d\upsilon )
\end{eqnarray*}%
In the form of the series,%
\begin{eqnarray}
u(t) &=&\sum_{\left\vert \alpha \right\vert \geq 1}\sum_{\left\vert
p\right\vert =1}\int_{0}^{t}\int_{U}\sum_{\beta +p\leq \alpha
}T_{t-s}f_{\alpha -p-\beta }(s,\upsilon )E_{p}(s,\upsilon )\frac{%
H(s,t)^{\beta }}{\beta !}d\mu \mathfrak{N}_{\alpha }  \notag \\
&&+\sum_{\alpha }\sum_{\beta \leq \alpha }T_{t}w_{\alpha -\beta }\frac{%
H(t)^{\beta }}{\beta !}\mathfrak{N}_{\alpha }.  \notag
\end{eqnarray}

(ii) The solution is the limit of Picards iterations $u^{n}\left( t\right) $%
: $u^{0}\left( t\right) =T_{t}w+\int_{0}^{t}\int T_{t-s}f(s,\upsilon
)\diamond \mathfrak{N}\left( ds,d\upsilon \right) ,$%
\begin{equation*}
u^{n+1}\left( t\right) =T_{t}w+\int_{0}^{t}\int [T_{t-s}u^{n}(s)G\left(
s,\upsilon \right) +T_{t-s}f(s,\upsilon )]\diamond \mathfrak{N}\left(
ds,d\upsilon \right) ,
\end{equation*}%
$0\leq t\leq T.$ In fact, for $0\leq t\leq T,$%
\begin{eqnarray*}
u^{n}(t) &=&T_{t}w\diamond \sum_{k=0}^{n}\frac{\mathfrak{N}\left( \chi
_{\lbrack 0,t]}G\right) ^{\diamond k}}{k!} \\
&&+\int_{0}^{t}\int \sum_{k=0}^{n}\frac{\mathfrak{N}\left( \chi _{\lbrack
s,t]}G\right) ^{\diamond k}}{k!}\diamond T_{t-s}f(s,\upsilon )\diamond 
\mathfrak{N}\left( ds,d\upsilon \right) .
\end{eqnarray*}%
If $f\in \mathcal{S}^{\prime }\left( \mathbf{H,R}\right) $ and $w\in 
\mathcal{S}^{\prime }\left( \mathbf{R}\right) $, then $u^{n},u\in C\mathcal{S%
}^{\prime }\left( [0,T];\mathbf{R}\right) ;$

(iii) If $w$ is deterministic and $g$ is adapted, then the solution $u$ is $%
L^{2}\left( \mathbf{R}^{d}\right) $-valued and 
\begin{equation*}
\sup_{t}\mathbf{E}\left[ |u(t)|_{L^{2}\left( \mathbf{R}^{d}\right) }^{2}%
\right] \leq C\mathbf{E[}\left\vert w\right\vert _{L^{2}\left( \mathbf{R}%
^{d}\right) }+\int_{0}^{T}\int_{\mathbf{R}^{d}}\int_{U}\left\vert g\left(
x,s,\upsilon \right) \right\vert ^{2}dsdxd\pi ].
\end{equation*}
\end{proposition}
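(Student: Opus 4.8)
The plan is to treat equation (\ref{pe2}) exactly as the linear SDE (\ref{12}) was treated in Proposition \ref{prop8}, carrying the parabolic semigroup $T_{t}$ along for the ride. The crucial structural observation is that $T_{t}$ is a bounded linear operator acting on the spatial variable $x$ alone; by Remark \ref{rem11} it therefore acts coefficient-by-coefficient on a generalized process $\sum_{\alpha}u_{\alpha}(t)\mathfrak{N}_{\alpha}$, commutes with the chaos decomposition, and commutes with the Wick product against the deterministic Wick exponential $\exp^{\diamond}\{\mathfrak{N}(\chi_{[s,t]}G)\}$. Consequently every computation in the proof of Proposition \ref{prop8} goes through after inserting $T_{t-s}$ in front of the source and the drift, provided we first pass to the mild formulation. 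For that reduction I would invoke Lemma \ref{lel3}: a process $u\in C\mathcal{D}'([0,T],H_{2}^{2})$ solves (\ref{pe2}) if and only if it solves the integral equation (\ref{pf3}), whose coefficient form (\ref{pf11}) already records the propagator at the mild level.

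For part (i) I would verify directly that the claimed series solves (\ref{pf3}). Expanding each Wick exponential into its chaos coefficients $H(s,t)^{\beta}/\beta!$ via Corollary \ref{lr1}, the Skorokhod integral $\int_{0}^{t}\int T_{t-s}u(s)G\diamond\mathfrak{N}(ds,d\upsilon)$ is computed by the same combinatorial identity $\mathfrak{N}_{p+\beta}=\mathfrak{N}_{p}\mathfrak{N}_{\beta}-\text{projection}_{H_{|\beta|}}[\mathfrak{N}_{p}\mathfrak{N}_{\beta}]$ that drove Proposition \ref{prop8}(i); the only change is the extra factor $T_{t-s}$, which passes through the integral in $s$ and reassembles the symmetrized $|\gamma|$-fold kernels. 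Matching coefficients of $\mathfrak{N}_{\alpha}$ shows the series reproduces $u(t)-T_{t}w$, i.e. (\ref{pf3}) holds. Uniqueness is already supplied by Lemma \ref{rem12}, so the explicit formula is \emph{the} solution.

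For part (ii) I would set up the Picard scheme and prove by induction that $u^{n}(t)$ equals the stated partial sum, in which the full Wick exponential is truncated at its $k=n$ term; the inductive step is the integral computation of part (i), since feeding $u^{n}$ into the drift and applying the Skorokhod integral advances the truncation by one power of $\mathfrak{N}(\chi_{[s,t]}G)$. Convergence $u^{n}\to u$ in $C\mathcal{D}'$ and the membership $u^{n},u\in C\mathcal{S}'([0,T];\mathbf{R})$ under $f\in\mathcal{S}'(\mathbf{H},\mathbf{R})$, $w\in\mathcal{S}'(\mathbf{R})$ then follow from the square-summability of the Wick-exponential coefficients in Proposition \ref{prop6}, exactly as in Proposition \ref{prop8}(ii).

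Part (iii) is where the genuine work lies, and I expect it to be the main obstacle. When $w$ is deterministic and $g$ is adapted, I would first argue that every Picard iterate $u^{n}(t)$ is adapted: the source is adapted, and $T_{t-s}$ acts on $x$ and hence does not enlarge the support of the chaos kernels in the time variable, so the support condition defining adaptedness is preserved; Proposition \ref{prop4} then gives adaptedness of each Skorokhod integral. For adapted integrands the It\^{o}--Skorokhod isometry of Corollary \ref{c2} applies, converting $\mathbf{E}|u^{n+1}(t)|_{L^{2}(\mathbf{R}^{d})}^{2}$ into the plain $L^{2}$ norm of the integrand. Bounding $|T_{t-s}h|_{L^{2}}^{2}\le e^{CT}|h|_{L^{2}}^{2}$ by Remark \ref{re6} yields
\begin{equation*}
\mathbf{E}|u^{n+1}(t)|_{L^{2}}^{2}\le C\Big(|w|_{L^{2}}^{2}+\int_{0}^{t}\int_{V}\mathbf{E}|u^{n}(s)|_{L^{2}}^{2}\,G(s,\upsilon)^{2}\,d\pi\,ds+\mathbf{E}\int_{0}^{t}\int_{U}|g|^{2}\,d\mu\Big),
\end{equation*}
and Gronwall's lemma in $t$ produces a bound uniform in $n$; summing the analogous estimate for $u^{n+1}-u^{n}$ gives convergence in $L^{2}(\Omega;L^{2}(\mathbf{R}^{d}))$, identifying the limit with $u$ and delivering the stated a priori bound. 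The delicate point to get right is that the integrand in (\ref{pf3}) carries the running time $t$ through $T_{t-s}$, so the isometry and the Gronwall step must be applied with $t$ fixed and the factor $e^{CT}$ extracted uniformly before integrating in $s$.
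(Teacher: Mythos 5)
Your proposal is correct and takes essentially the same approach as the paper: the paper's own proof of Proposition \ref{propf1} simply states that one repeats the main arguments of Proposition \ref{prop8} with obvious changes, using the semigroup estimate (\ref{pf:10}) of Remark \ref{re6} to control the $L^{2}(\Omega,\mathbf{P})$-norms of the Picard iterations. Your write-up spells out exactly those steps (reduction to the mild form via Lemma \ref{lel3}, coefficient matching for (i), induction on the Picard scheme for (ii), and adaptedness via Proposition \ref{prop4} plus the It\^{o}--Skorokhod isometry of Corollary \ref{c2} and Gronwall for (iii)), so it is a faithful, more detailed rendering of the paper's argument.
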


\begin{proof}
We repeat the main arguments of Proposition \ref{prop8} (as in the case of
linear SDE). The changes in the proof (i) are obvious. The proof of
(ii)-(iii) is identical to the proof of (ii), (iv) in Proposition \ref{prop8}
with the use of (\ref{pf:10}) for the estimate of the iterations $%
L^{2}\left( \Omega ,\mathbf{P}\right) $-norm.
\end{proof}

\subsection{\label{STPDE}Stationary SPDEs}

Let us consider a stationary (time independent) equation 
\begin{equation}
{\mathbf{A}}u+\mathbb{\delta }_{\mathfrak{\dot{N}}}({\mathbf{M}}u)=g
\label{eq:ellCopy2}
\end{equation}%
where, as previously, $\mathfrak{N}$-noise is a formal series $\mathfrak{%
\dot{N}}=\sum_{k}m_{k}\xi _{k}$ and $\left\{ m_{k}\right\} $ is a CONS in a
Hilbert space $H$ and $\xi _{k}$ are independent random variables with zero
mean and variance 1.

We will consider equation (\ref{eq:ellCopy2}) in a triple of Hilbert spaces $%
\left( V,H,V^{\prime }\right) ,$

\begin{itemize}
\item A. $V\subset H\subset V^{\prime }$ and the imbeddings $V\subset H$ and 
$H\subset V^{\prime }$ are dense and continuous;

\item B. The space $V^{\prime }$ is dual to $V$ relative to the inner
product in $H;$

\item C. There exists a constant $C>0$ such that $\left\vert
(u,v)_{H}\right\vert \leq C\left\Vert u\right\Vert _{V}\left\Vert
v\right\Vert _{V^{\prime }}$ for all $u$ and $v.$
\end{itemize}

A triple of Hilbert spaces is often call \textit{normal} if the assumptions
A,B,C hold. A typical example of a normal triple is the Sobolev spaces 
\begin{equation*}
\left( H^{l+\gamma }\left( \mathbb{R}^{d}\right) ,H^{l}\left( \mathbb{R}%
^{d}\right) ,H^{l-\gamma }\left( \mathbb{R}^{d}\right) \right) \text{ for }%
\gamma >0.
\end{equation*}

Everywhere in this section it is assumed that ${\mathbf{A}}:V\rightarrow
V^{\prime }$ and ${\mathbf{M}}:V\rightarrow V^{\prime }\otimes l_{2}$ are
bounded linear operators. \ 

As we already know, equation (\ref{eq:ellCopy2}) can be rewritten in the
form 
\begin{equation}
{\mathbf{A}}u+\sum_{n\geq 1}\mathbf{M}_{n}u\diamond \xi _{n}=f,
\label{eq:ell-b}
\end{equation}%
where $u=\sum_{\alpha }u_{\alpha }\mathfrak{N}_{\alpha }.$ Since 
\begin{equation}
\mathbf{M}_{n}u=\sum_{\alpha \mathbf{\in }{\mathcal{J}}}\mathbf{M}%
_{n}u_{\alpha }\mathfrak{N}_{\alpha },  \label{eq:operM}
\end{equation}%
we get 
\begin{eqnarray*}
&&\sum_{n\geq 1}\mathbf{M}_{n}u\diamond \xi _{n} \\
&=&\sum_{\alpha \mathbf{\in }{\mathcal{J}}}\sum_{n\geq 1}\mathbf{M}%
_{n}u_{\alpha }\xi _{\alpha }\diamond \xi _{n}+\sum_{\alpha \mathbf{\in }{%
\mathcal{J}}}\sum_{n\geq 1}\mathbf{M}_{n}u_{\alpha }\mathfrak{N}_{\alpha
+\varepsilon _{n}} \\
&=&\sum_{n\geq 1}\sum_{\beta \mathbf{\in }{\mathcal{J}}:\left\vert \beta
\right\vert \geq 1}\mathbf{M}_{n}u_{\beta -\varepsilon _{n}}\mathfrak{N}%
_{\beta }.
\end{eqnarray*}%
Therefore, for $\gamma \in J$ such that $\left\vert \gamma \right\vert >0$,
we have 
\begin{equation*}
\left( \sum_{n\geq 1}\mathbf{M}_{n}u\diamond \xi _{n}\right) _{\gamma
}=\sum_{n\geq 1}\mathbf{M}_{n}u_{\gamma -\varepsilon _{n}}
\end{equation*}

It is readily checked that the set $(u_{\alpha },$ $\alpha \in J)$ solves
the following system of deterministic equations related to (\ref{eq:ell-b})
is given by 
\begin{equation}
\begin{array}{c}
{\mathbf{A}}u_{\alpha }=Ef\text{ }\ \text{if }\left\vert \alpha \right\vert
=0 \\ 
\\ 
{\mathbf{A}}u_{\alpha }+\sum_{n\geq 1}\mathbf{M}_{n}u_{\alpha -\varepsilon
_{n}}=f_{\gamma }\text{ }\ \text{if }\left\vert \alpha \right\vert >0%
\end{array}
\label{prop1st}
\end{equation}

Note that the propagator (\ref{prop1st}) is lower triangular. Therefore, if $%
A$ has an appropriate inverse $A^{-1},$then the propagator can be solved
sequentially. Then, a solution to equation (\ref{eq:ell-b}) could be defined
by the following formula

\begin{equation}
u=\sum_{\alpha \in J}u_{\alpha }\mathfrak{N}_{\alpha }  \label{propSND}
\end{equation}%
where the sequence $\left\{ u_{\alpha },\alpha \in J\right\} .$

\bigskip Of course, an appropriate question to ask is: does equation (\ref%
{eq:ell-b}) has finite variance? The answer to this question is negative.
The following simple example clarifies this issue.

\begin{example}
\label{ExerFibon}. Consider the following simple version of equation%
\begin{equation}
u=1+u\diamond \xi .  \label{ex0}
\end{equation}%
\ Obviously, in this setting, $J=\left( 0,1,2,...\right) $ and consists of
one-dimensional indices$\ \alpha =0,1,2,...$ and $\mathbf{E}\left[ \mathfrak{%
N}_{i}^{2}\right] =i!$,

It is easy to see that $\left\{ u_{n}={\mathbb{E}}\left( u\mathfrak{N}%
_{n}\right) ,n\geq 0\;\right\} $ solves the following system of equations: 
\begin{equation*}
u_{0}=1,\;u_{n}=I_{n=0}+\sqrt{n}u_{n-1},\;n\geq 1
\end{equation*}%
Obviously, $u_{n}=\sqrt{n!}$and $v=1+\sqrt{n!}\mathfrak{N}_{n},$ Therefore, 
\begin{equation*}
{\mathbb{E}}u^{2}=\sum_{n\geq 1}u_{n}^{2}=\infty .
\end{equation*}
\end{example}

One could also define a solution to equation (\ref{eq:ell-b}) as a
generalized $\mathcal{D}${-random variable with values in }$V,${\ such that }%
(\ref{eq:ell-b}) holds in $\mathcal{D}(V^{\prime }).$

\subsubsection{\protect\bigskip Weighted Norms}

Another popular definition of solutions based on rescaling/weighting of the
coefficients $u_{\alpha }$ was discussed thoroughly in the literature on
polynomial chaos expansion for Gaussian and Levy processes (see, for
example, \cite{Ok2}, \cite{LR}, \cite{MR1}, \cite{NR}). This technique is
also suitable for the current setting and we will describe it briefly.

\bigskip Given a separable Hilbert space $X$ and sequence of positive
numbers $\mathit{R}=\left\{ r_{\alpha },\text{ }\alpha \in J\right\} ,$ we
define the space $\mathit{R}L_{2}\left( X\right) $ as the collection of
formal series $f=\sum_{\alpha }f_{\alpha }\mathfrak{N}_{\alpha },$ $%
f_{\alpha }\in X$ such that 
\begin{equation}
\left\Vert f\right\Vert _{\mathit{R}L_{2}\left( X\right) }^{2}=\sum_{\alpha
}\left\Vert f_{\alpha }\right\Vert _{X}^{2}r_{\alpha }^{2}<\infty
\label{normR}
\end{equation}%
If (\ref{normR}) holds, then $\sum_{\alpha }r_{\alpha }f_{\alpha }\xi
_{\alpha }\in L_{2}\left( X\right) $.

Similarly, the space $\mathit{R}^{-1}L_{2}\left( X\right) $ corresponds to
the sequence $\mathit{R}^{-1}=\left\{ r_{\alpha }^{-1},\text{ }\alpha \in
J\right\} .~$

Important and popular examples of the space $\mathit{R}L_{2}\left( X\right) $
correspond to the following weights:

(a) $r_{\alpha }^{2}=\Pi _{k=1}^{\infty }q_{k}^{\alpha _{k}},$ where $%
\left\{ q_{k},k\geq 1\right\} $ is a non-increasing sequence of positive
numbers;

(b) Kondratiev's spaces $\left( \mathit{S}\right) _{\rho ,\alpha }:$

\begin{equation*}
r_{\alpha }^{2}=\left( \alpha !\right) ^{\rho }\left( 2\mathbb{N}\right) ^{%
\mathit{l}\alpha }\text{ }\rho \leq 0,\text{ }\mathit{l}\leq 0.
\end{equation*}

\bigskip In particular, in the setting of Example \ref{ExerFibon}, ${\mathbb{%
E}}u^{2}=\left\Vert u\right\Vert _{\left( \mathit{S}\right)
_{0,0}}^{2}=\infty $, but $E\left\Vert u\right\Vert _{\left( \mathit{S}%
\right) _{\rho ,0}}^{2}<\infty $ for sufficiently small $\rho .$

\subsubsection{\label{WikNon}Wick-Nonlinear SPDEs}

Let us consider equation 
\begin{equation}
{\mathbf{A}}u-u^{\diamond 3}+\sum_{n\geq 1}\mathbf{M}_{n}u\diamond \xi _{n}=f%
\text{, }  \label{ell-3-b}
\end{equation}

where $u^{\diamond 3}=u\diamond u\diamond u.$ As in the previous section, we
will look for a chaos solution of the form%
\begin{equation*}
u=\sum_{\kappa \in J}u_{\kappa }\mathfrak{N}_{\kappa }.
\end{equation*}%
Obviously, 
\begin{equation*}
u^{\diamond 3}=\sum_{\kappa ,\beta ,\gamma \in J}u_{\kappa }u_{\beta
}u_{\gamma }\mathfrak{N}_{\kappa +\beta +\gamma }.
\end{equation*}%
Therefore, 
\begin{equation*}
\left( u^{\diamond 3}\right) _{\alpha }=\sum_{\kappa ,\beta ,\gamma :\kappa
+\beta +\gamma =\alpha }u_{\kappa }u_{\beta }u_{\gamma }
\end{equation*}%
and the propagator of equation (\ref{ell-3-b}) is given by 
\begin{equation}
{\mathbf{A}}u_{\alpha }-\sum_{\kappa ,\beta ,\gamma :\kappa +\beta +\gamma
=\alpha }u_{\kappa }u_{\beta }u_{\gamma }+\sum_{n\geq 1}\mathbf{M}%
_{n}u_{\alpha -\varepsilon _{n}}=f_{\alpha }  \label{eq:W2}
\end{equation}%
for all $\alpha \in J.$

\bigskip Similarly to (\ref{prop1st}),\ system (\ref{eq:W2}) is also lower
triangular and could be solved sequentially, assuming that operator $A$ has
an appropriate inverse.

\bigskip It is readily checked that if the Wick cubic $u^{\diamond 3}$ is
replaced by any Wick type polynomial then the related propagator system
remains to be lower triangular.

\begin{acknowledgement}
We express our gratitude to D. Nualart for a useful discussion.
\end{acknowledgement}

\section{Appendix}

Let $\left( \xi _{k}\right) $ be a sequence of r.v.. We assume that the
following assumption holds.

\textbf{G. }For each vector r.v. $\left( \xi _{i_{1}},\ldots ,\xi
_{i_{n}}\right) $ the moment generating function 
\begin{equation*}
M_{i_{1}\ldots i_{n}}(t)=M_{i_{1}\ldots i_{n}}(t_{1},\ldots ,t_{n})=\mathbf{E%
}\exp \left\{ t_{1}\xi _{i_{1}}+\ldots t_{n}\xi _{i_{n}}\right\}
\end{equation*}%
exists for all $t=\left( t_{1},\ldots ,t_{n}\right) $ in some neighborhood
of $0\in \mathbf{R}^{n}$.

Denote $\mathcal{J}$ the set of all multiinidices $\alpha =\left( \alpha
_{1},\alpha _{2},\ldots \right) $ such that $\left\vert \alpha \right\vert
<\infty $ and $\alpha _{k}\in \left\{ 0,1,2,\ldots \right\} .$ Let $\mathcal{%
G}=\sigma \left( \xi _{k},k\geq 1\right) $.

\begin{lemma}
\label{ale1}Let \textbf{G }holds. Then every $f\in L^{2}\left( \mathcal{G},%
\mathbf{P}\right) $ can be approximated in $L^{2}\left( \mathcal{G},\mathbf{P%
}\right) $ by a sequence of polynomials in $\xi ^{\alpha }=\prod_{k}\xi
_{k}^{\alpha _{k}},\alpha \in \mathcal{J}$.
\end{lemma}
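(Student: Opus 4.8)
The plan is to first reduce the problem to finitely many of the variables by a projection argument, and then prove density of polynomials in the resulting finite-dimensional $L^2$-space using the holomorphy of a Laplace--Fourier transform that is furnished by assumption \textbf{G}.

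First I would set $\mathcal{G}_n=\sigma(\xi_1,\ldots,\xi_n)$, so that $\mathcal{G}=\sigma(\bigcup_n\mathcal{G}_n)$. For $f\in L^2(\mathcal{G},\mathbf{P})$ put $f_n=\mathbf{E}[f\mid\mathcal{G}_n]$; by the martingale convergence theorem $f_n\to f$ in $L^2(\mathcal{G},\mathbf{P})$, so it suffices to approximate each $\mathcal{G}_n$-measurable $f_n$. By the Doob--Dynkin lemma $f_n=h_n(\xi_1,\ldots,\xi_n)$ for a Borel function $h_n\in L^2(\mu_n)$, where $\mu_n$ denotes the joint law of $(\xi_1,\ldots,\xi_n)$ on $\mathbf{R}^n$. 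The whole problem therefore reduces to showing that polynomials are dense in $L^2(\mathbf{R}^n,\mu_n)$ for each fixed $n$.

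For this finite-dimensional step I would argue by duality: suppose $g\in L^2(\mu_n)$ satisfies $\int_{\mathbf{R}^n}x^\alpha g(x)\,\mu_n(dx)=0$ for every multiindex $\alpha$, and show $g=0$ $\mu_n$-a.e. Consider $G(z)=\int_{\mathbf{R}^n}e^{\langle z,x\rangle}g(x)\,\mu_n(dx)$ for $z\in\mathbf{C}^n$. By Cauchy--Schwarz, $\int|e^{\langle z,x\rangle}|\,|g|\,d\mu_n\le(\int e^{2\langle\mathrm{Re}\,z,x\rangle}\,d\mu_n)^{1/2}\|g\|_{L^2(\mu_n)}$, and assumption \textbf{G} makes the first factor finite whenever $\mathrm{Re}\,z$ lies in some neighborhood of $0$. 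Hence $G$ is well-defined and, by differentiation under the integral sign (again justified by \textbf{G}), holomorphic on the tube $T=\{z:|\mathrm{Re}\,z|<\delta\}$, which contains the entire imaginary axis $\{it:t\in\mathbf{R}^n\}$. Differentiating under the integral gives $\partial^\alpha G(0)=\int x^\alpha g\,d\mu_n=0$ for all $\alpha$, so $G$ vanishes on a neighborhood of the origin; since $T$ is connected, the identity theorem forces $G\equiv0$ on $T$.

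Evaluating at $z=it$ then yields $\int e^{i\langle t,x\rangle}g(x)\,\mu_n(dx)=0$ for all $t\in\mathbf{R}^n$; as $g\in L^1(\mu_n)$, the signed measure $g\,\mu_n$ is finite and has vanishing Fourier transform, whence $g\,\mu_n=0$ and $g=0$ $\mu_n$-a.e. This establishes density of polynomials in $L^2(\mu_n)$, and combined with the first step it shows that $f$ is approximated in $L^2(\mathcal{G},\mathbf{P})$ by polynomials in the $\xi^\alpha$. The main obstacle is precisely the finite-dimensional density: the crucial point is that \textbf{G} upgrades the mere existence of all moments to holomorphy of $G$ on a tube containing the imaginary axis, which is exactly what lets one pass from the vanishing of all derivatives of $G$ at $0$ to the vanishing of the Fourier transform everywhere — a multidimensional moment-problem determinacy phenomenon.
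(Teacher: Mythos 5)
Your proof is correct, and its engine is the same one the paper uses: orthogonality to all monomials, upgraded by the analyticity that \textbf{G} furnishes, forces a Fourier-type pairing to vanish identically, and Fourier uniqueness then finishes. The packaging, however, differs at both ends. The paper never leaves the probability space: it takes $f$ orthogonal to every $\xi^{\alpha}$, fixes a direction $\theta \in \mathbf{R}^{n}$, and studies the \emph{one-variable} bilateral Laplace transform $h(r,\theta )=\mathbf{E}[\exp \{r\,\theta \cdot \xi ^{n}\}f]$, citing Widder's theorem for analyticity of $h$ in a strip around the imaginary axis; since all $r$-derivatives of $h$ at $0$ are linear combinations of the vanishing moments $\mathbf{E}[f\xi ^{\alpha }]$, $h$ vanishes on the strip, hence $\mathbf{E}[\exp \{\iota \theta \cdot \xi ^{n}\}f]=0$, and this is converted into $\mathbf{E}[g(\xi ^{n})f]=0$ for all bounded continuous $g$ by approximating with continuous periodic functions of long period; the closing step ("since $n$ is arbitrary, $f=0$ a.s.") is exactly the martingale/monotone-class argument that you place at the beginning instead. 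You first reduce to the finite-dimensional marginal laws $\mu _{n}$ via martingale convergence and Doob--Dynkin, run the duality inside $L^{2}(\mu _{n})$, prove holomorphy of the several-variable transform $G(z)$ on a tube and invoke the identity theorem in $\mathbf{C}^{n}$, and conclude with injectivity of the Fourier transform of the finite signed measure $g\,\mu _{n}$. What the paper's route buys is that only classical one-complex-variable Laplace theory is needed, with no several-variable holomorphy or identity theorem; what your route buys is that the passage from finitely many variables to $\mathcal{G}$ is made explicit rather than implicit, and the measure-theoretic Fourier uniqueness replaces the paper's more laborious periodic-to-compactly-supported-to-bounded-continuous approximation chain. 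The two points in your argument that demand care --- keeping $2\,\mathrm{Re}\,z$ inside the domain where the moment generating function exists when applying Cauchy--Schwarz, and justifying differentiation under the integral sign --- are both ones you flagged and both go through, so the proof is complete.
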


\begin{proof}
The assumption \textbf{G} implies that all the moments of $\xi _{k},k\geq 1,$
exist. Assume $f\in L^{2}\left( \mathcal{G},\mathbf{P}\right) $ and 
\begin{equation}
\mathbf{E}\left[ f\xi ^{\alpha }\right] =0\text{ }\forall \alpha \in 
\mathcal{J}\text{.}  \label{2a}
\end{equation}%
It is enough to show that $f=0$ a.s. in such a case.

Let $\xi ^{n}=\left( \xi _{1},\ldots ,\xi _{n}\right) ,\theta =\left( \theta
_{1},\ldots ,\theta _{n}\right) \in \mathbf{R}^{n}$. Then%
\begin{equation*}
h\left( r,\theta \right) =\mathbf{E[}\exp \left\{ r\theta \cdot \xi
^{n}\right\} f]
\end{equation*}%
exists for $r\in \left( -\varepsilon ,\varepsilon \right) $ for some $%
\varepsilon >0$. Since $h\left( r\right) $ is a bilateral Laplace transform,
by Theorem 5a, p. 57 in \cite{w}, it must exist and be analytic for all
complex values of $r$ in the strip $-\varepsilon <\func{Re}r<\varepsilon .$
In addition, because of (\ref{2a}), 
\begin{equation*}
h\left( \iota u,\theta \right) =\mathbf{E[}\exp \left\{ \iota u\theta \cdot
\xi ^{n}\right\} f]=0
\end{equation*}%
for all $u\in \mathbf{R}$ (here $\iota ^{2}=-1).$ In particular, 
\begin{equation*}
\tilde{h}\left( \theta \right) =\mathbf{E[}\exp \left\{ \iota \theta \cdot
\xi ^{n}\right\} f]=0
\end{equation*}%
for all $\theta \in \mathbf{R}^{n}$. Therefore, 
\begin{equation}
\mathbf{E[}g(\xi ^{n})f]=0  \label{3}
\end{equation}%
for any continuous periodic function on $\mathbf{R}^{n}$. Then approximating
with long period functions we see that (\ref{3}) holds for a continuous
function on $\mathbf{R}^{n}$ with compact support, and then for any
continuous bounded function $g$ as well. Since $n$ is arbitrary, it follows
that $f=0$ a.s.
\end{proof}

\begin{lemma}
\label{fl1}For $v=\sum_{\left\vert \alpha \right\vert =n}v_{\alpha
}E_{\alpha }\in \mathbf{H}^{\hat{\otimes}n}(Y)$ and $u=\sum_{\left\vert
\alpha \right\vert =n}u_{\alpha }E_{\alpha }\in \mathbf{H}^{\hat{\otimes}%
n}(Y),$ we have%
\begin{equation*}
I_{n}\left( v\right) \diamond I_{m}\left( u\right) =I_{n+m}\left( \widetilde{%
v\otimes _{Y}u}\right) ,
\end{equation*}%
where $v\otimes _{Y}u=\left( v\left( x\right) ,u\left( y\right) \right)
_{Y},x\in U^{n},y\in U^{m}.$
\end{lemma}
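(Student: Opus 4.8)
The plan is to reduce the identity to single-term tensors and then to pin down one symmetrization constant. First, observe that both sides are $\mathbf{R}$-bilinear in the $Y$-valued coefficients of $v$ and $u$: the Wick product on $\mathcal{D}^{\prime }(Y)$ uses the inner product $(\cdot ,\cdot )_{Y}$, which is bilinear, while $v\otimes _{Y}u$, its symmetrization, and $I_{n+m}$ are all linear. Hence it suffices to treat $v=y_{1}E_{\beta }$ and $u=y_{2}E_{\gamma }$ with $y_{1},y_{2}\in Y$, $\left\vert \beta \right\vert =n$, $\left\vert \gamma \right\vert =m$, and then recover the general statement by bilinearity and a passage to the limit.

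Second, I would compute the left-hand side directly. Since $I_{n}(y_{1}E_{\beta })=n!\,y_{1}\mathfrak{N}_{\beta }$ and $I_{m}(y_{2}E_{\gamma })=m!\,y_{2}\mathfrak{N}_{\gamma }$ each have a single nonzero chaos coefficient, the definition of the Wick product together with $\mathfrak{N}_{\beta }\diamond \mathfrak{N}_{\gamma }=\mathfrak{N}_{\beta +\gamma }$ gives
\begin{equation*}
I_{n}(y_{1}E_{\beta })\diamond I_{m}(y_{2}E_{\gamma })=(n!\,y_{1},m!\,y_{2})_{Y}\,\mathfrak{N}_{\beta +\gamma }=n!\,m!\,(y_{1},y_{2})_{Y}\,\mathfrak{N}_{\beta +\gamma }.
\end{equation*}

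The key step, and the main obstacle, is the symmetrization identity
\begin{equation*}
\widetilde{E_{\beta }\otimes E_{\gamma }}=\frac{n!\,m!}{(n+m)!}E_{\beta +\gamma },\qquad \left\vert \beta \right\vert =n,\ \left\vert \gamma \right\vert =m,
\end{equation*}
which generalizes the relation $\widetilde{E_{\alpha }E_{p}}=\tfrac{\left\vert \alpha \right\vert !}{\left\vert \alpha +p\right\vert !}E_{\alpha +p}$ used in the proof of Proposition \ref{prop2}. To establish it I would write $E_{\beta }$ as $\beta !$ times the sum over the distinct rearrangements of the characteristic multiset $K_{\beta }$ (each distinct ordering occurring $\beta !$ times among the $n!$ permutations), and likewise for $E_{\gamma }$ and for $E_{\beta +\gamma }$, whose characteristic multiset is $K_{\beta }\cup K_{\gamma }$. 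Symmetrizing a single simple tensor built from $K_{\beta }\cup K_{\gamma }$ produces $\tfrac{(\beta +\gamma )!}{(n+m)!}$ times the sum over distinct rearrangements, independently of the starting tensor; multiplying by the number $\tfrac{n!}{\beta !}\cdot \tfrac{m!}{\gamma !}$ of pairs of rearrangements and by the prefactor $\beta !\gamma !$ collapses everything to $\tfrac{n!\,m!}{(n+m)!}E_{\beta +\gamma }$. This careful bookkeeping of multiplicities is where the work lies.

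Finally, I would assemble the right-hand side. Since $\widetilde{v\otimes _{Y}u}=(y_{1},y_{2})_{Y}\,\widetilde{E_{\beta }\otimes E_{\gamma }}=(y_{1},y_{2})_{Y}\tfrac{n!\,m!}{(n+m)!}E_{\beta +\gamma }$, and $I_{n+m}(E_{\beta +\gamma })=(n+m)!\,\mathfrak{N}_{\beta +\gamma }$, we get
\begin{equation*}
I_{n+m}(\widetilde{v\otimes _{Y}u})=(y_{1},y_{2})_{Y}\frac{n!\,m!}{(n+m)!}\,(n+m)!\,\mathfrak{N}_{\beta +\gamma }=n!\,m!\,(y_{1},y_{2})_{Y}\,\mathfrak{N}_{\beta +\gamma },
\end{equation*}
which coincides with the left-hand side computed above. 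Bilinearity then finishes the general case, completing the proof.
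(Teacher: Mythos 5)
Your proof is correct, and it is organized along a genuinely different route than the paper's. The paper computes with the full expansions at once: it writes $I_{n}(v)\diamond I_{m}(u)=n!\,m!\sum_{\alpha }\sum_{\beta \leq \alpha }(v_{\beta },u_{\alpha -\beta })_{Y}\mathfrak{N}_{\alpha }$, substitutes the integral formulas $v_{\beta }=\frac{1}{\beta !n!}\int vE_{\beta }\,d\mu _{n}$ and $u_{\alpha -\beta }=\frac{1}{(\alpha -\beta )!m!}\int uE_{\alpha -\beta }\,d\mu _{m}$, collapses the inner sum via the splitting identity $\sum_{\beta \leq \alpha ,\,|\beta |=n}\frac{E_{\beta }\otimes E_{\alpha -\beta }}{\beta !(\alpha -\beta )!}=\frac{E_{\alpha }}{\alpha !}$ on $U^{n}\times U^{m}$, and finally recognizes $\frac{1}{\alpha !}\int (v\otimes _{Y}u)E_{\alpha }\,d\mu _{n+m}=(n+m)!\,l_{\alpha }$, where $l_{\alpha }$ are the coefficients of $\widetilde{v\otimes _{Y}u}$. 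You instead reduce by bilinearity to elementary tensors and isolate the symmetrization constant $\widetilde{E_{\beta }\otimes E_{\gamma }}=\frac{n!\,m!}{(n+m)!}E_{\beta +\gamma }$; this constant is correct (your multiplicity bookkeeping checks out) and is precisely the dual form of the paper's splitting identity --- the same multinomial count read in the opposite direction --- and it specializes consistently to $\widetilde{E_{\alpha }E_{p}}=\frac{\left\vert \alpha \right\vert !}{\left\vert \alpha +p\right\vert !}E_{\alpha +p}$ used in Proposition \ref{prop2} and to the factor $\frac{1}{n+1}$ appearing in Proposition \ref{fp3}. Your organization buys transparency: both sides of the lemma become one-line computations on basis elements, and all the combinatorics sits in a single reusable identity. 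What it costs is the closing density step: to pass from finite linear combinations to general $v\in \mathbf{H}^{\hat{\otimes}n}(Y)$, $u\in \mathbf{H}^{\hat{\otimes}m}(Y)$ one must note that each chaos coefficient of either side is a finite sum depending continuously on the coefficients $v_{\beta },u_{\gamma }$ (so convergence in $\mathbf{H}^{\hat{\otimes}n}(Y)$ passes to the identity coefficientwise); you flag this as ``a passage to the limit'' without spelling it out, which is routine and no less rigorous than the paper's own formal interchanges of infinite sums and integrals.
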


\begin{proof}
Indeed, $I_{n}\left( v\right) =n!\sum_{\left\vert \alpha \right\vert =n}v_{a}%
\mathfrak{N}_{\alpha },~I_{m}\left( u\right) =m!\sum_{\left\vert \alpha
\right\vert =m}u_{\alpha }\mathfrak{N}_{\alpha }$ and%
\begin{eqnarray*}
I_{n}\left( v\right) \diamond I_{m}\left( u\right) &=&n!m!\sum_{\alpha
}\sum_{\beta \leq \alpha }\left( v_{\beta },u_{\alpha -\beta }\right) 
\mathfrak{N}_{\alpha } \\
&=&n!m!\sum_{|\alpha |=n+m}\sum_{\beta \leq \alpha }\int \left( v\otimes
_{Y}u\right) \frac{E_{\beta }}{\beta !n!}\frac{E_{\alpha -\beta }}{\left(
\alpha -\beta \right) !m!}\left( v_{\beta },u_{\alpha -\beta }\right) 
\mathfrak{N}_{\alpha } \\
&=&\sum_{\alpha }\sum_{\beta \leq \alpha }\int \left( v\otimes _{Y}u\right) 
\frac{E_{\beta }}{\beta !}\frac{E_{\alpha -\beta }}{\left( \alpha -\beta
\right) !}\mathfrak{N}_{\alpha }=\sum_{\alpha }\int \left( v\otimes
_{Y}u\right) \frac{E_{\alpha }}{\alpha !}\mathfrak{N}_{\alpha } \\
&=&(n+m)!\sum_{\left\vert \alpha \right\vert =n+m}l_{\alpha }\mathfrak{N}%
_{\alpha }=I_{n+m}\left( \widetilde{v\otimes _{Y}u}\right) ,
\end{eqnarray*}%
because $\widetilde{v\otimes _{Y}u}=\sum_{\left\vert \alpha \right\vert
=n+m}l_{\alpha }E_{\alpha }$ with 
\begin{equation*}
l_{\alpha }=\frac{1}{\alpha !(n+m)!}\int (v\otimes _{Y}u)E_{\alpha }d\mu
_{n+m}.
\end{equation*}
\end{proof}

\end{document}